\newtheorem{theorem}{Theorem}[section]
\newtheorem{definition}[theorem]{Definition}
\newtheorem{lemma}[theorem]{Lemma}
\newtheorem{remark}[theorem]{Remark}
\newtheorem{corollary}[theorem]{Corollary}
\newcommand{\R}{\ensuremath{\mathbb{R}}}
\newcommand{\N}{\ensuremath{\mathbb{N}}}
\newcommand{\Z}{\ensuremath{\mathbb{Z}}}
\newcommand{\Levy}{\ensuremath{\mathcal{L}}}
\newcommand{\Operator}{\ensuremath{\mathfrak{L}^{\sigma,\mu}}}
\newcommand{\Levymu}{\ensuremath{\mathcal{L}^\mu}}
\newcommand{\veps}{\varepsilon}
\newcommand{\FL}{(-\Delta)^{s}}
\newcommand{\vv}{\hat{v}}
\newcommand{\dd}{\,\mathrm{d}}
\newcommand{\dell}{\partial}
\newcommand{\indik}{\mathbf{1}}
\newcommand{\e}{\text{e}}
\DeclareMathOperator*{\esssup}{ess \, sup}
\DeclareMathOperator*{\esslim}{ess\,lim}
\DeclareMathOperator{\supp}{supp}
\numberwithin{equation}{section}
\begin{document}


\title[The one-phase fractional  Stefan problem]{The one-phase fractional  Stefan problem}

\author[F.~del Teso]{F\'elix del Teso}
\address[F.~del Teso]{Departamento de An\'alisis Matem\'atico y Matem\'atica Aplicada\\
Universidad Complutense de Madrid\\
28040 Madrid, Spain}
\email[]{fdelteso\@@{}ucm.es}
\urladdr{https://sites.google.com/view/felixdelteso}

\author[J.~Endal]{J\o rgen Endal}
\address[J. Endal]{Departamento de Matem\'aticas\\
Universidad Aut\'onoma de Madrid (UAM)\\
28049 Madrid, Spain}
\email[]{jorgen.endal\@@{}uam.es}
\urladdr{http://verso.mat.uam.es/~jorgen.endal/}

\author[J.~L.~V\'azquez]{Juan Luis V\'azquez}
\address[J.~L.~V\'azquez]{Departamento de Matem\'aticas\\
Universidad Aut\'onoma de Madrid (UAM)\\
28049 Madrid, Spain}
\email[]{juanluis.vazquez\@@{}uam.es}
\urladdr{http://verso.mat.uam.es/~juanluis.vazquez/}

\keywords{Stefan problem, phase transition, long-range interactions, free boundaries, nonlinear and nonlocal equation, fractional diffusion.}

\subjclass[2010]{
80A22, 
35D30, 
35K15, 
35K65, 
35R09, 
35R11, 
65M06, 
65M12
}

\begin{abstract}
We study the existence and properties of solutions and free boundaries of the one-phase Stefan problem with fractional diffusion posed  in $\R^N$.  In terms of the enthalpy $h(x,t)$, the evolution equation reads $\dell_t h+ \FL  \Phi(h) =0$, while the temperature is defined as $u:=\Phi(h):=\max\{h-L,0\}$ for some constant $L>0$  called the latent heat, and $\FL$ stands for the fractional Laplacian with exponent $s\in(0,1)$.

We prove the existence of a continuous and bounded selfsimilar solution of the form $h(x,t)=H(x\,t^{-1/(2s)})$ which exhibits a free boundary at the change-of-phase level $h(x,t)=L$. This level is located at the line (called the free boundary) $x(t)=\xi_0 t^{1/(2s)}$ for some $\xi_0>0$. The construction is done in 1D, and its extension to $N$-dimensional space is shown.

We also provide well-posedness and basic properties of very weak solutions for general bounded  data $h_0$ in several dimensions. The temperatures $u$ of these solutions are continuous functions that have finite speed of propagation, with possible free boundaries.  We obtain estimates on the growth in time of the support of $u$ for solutions with compactly supported initial temperatures.
Besides, we show the property of conservation of positivity for $u$ so that the support never recedes. On the contrary, the enthalpy $h$ has infinite speed of propagation and we obtain precise estimates on the tail.

The limits $L\to0^+$, $L\to +\infty$,  $s\to0^+$ and $s\to 1^-$ are also explored, and we find interesting connections with well-studied diffusion problems. Finally, we  propose convergent monotone finite-difference schemes and  include numerical experiments aimed at illustrating some of the obtained theoretical results, as well as other interesting phenomena.
\end{abstract}

\maketitle

\small
\setcounter{tocdepth}{1}
\tableofcontents
\normalsize


\section{Introduction}
\label{sec.intro}

In this paper we study existence and properties of solutions of the one-phase Stefan problem with fractional diffusion posed in $\R^N$. In terms of the enthalpy variable, we look for a  bounded  function $h=h(x,t)$ which solves the following Fractional Stefan Problem (FSP):
\begin{align}
\dell_t h+ \FL  \Phi(h) =0 \qquad\qquad&\text{in}\qquad Q_T:=\R^N\times(0,T), \label{P1}\tag{P}\\
h(\cdot,0)=h_0 \qquad\qquad&\text{on}\qquad \R^N,\label{P1-Init}\tag{IC}
\end{align}
where $\Phi(h(x,t)):=(h(x,t)-L)_+=\max\{h(x,t)-L,0\}=:u(x,t)$ is the temperature, $L>0$  is constant, $N\geq1$, and $T>0$ is arbitrary. The  nonnegative operator $\FL$ denotes the fractional Laplacian acting on the space variables, with  parameter $s\in(0,1)$,  defined by
$$
\FL\psi(x):=c_{N,s}\textup{P.V.}\int_{\R^N\setminus\{x\}}\frac{\psi(x)-\psi(y)}{|x-y|^{N+2s}}\dd y
$$
for some positive normalizing constant $c_{N,s}>0$, cf. \cite{Lan72,Ste70}.  Sometimes, it is convenient to write Problem \eqref{P1} in terms of both variables as
\begin{equation}\label{P3}\tag{P'}
\dell_t h+ \FL u=0,  \qquad\qquad\text{in}\qquad\qquad Q_T=\R^N\times(0,T).
\end{equation}
Then we can talk about a solution pair $(h,u)$, where the enthalpy $h$ and the temperature $u$ are always related through the law $u=\Phi(h)=(h-L)_+$. While temperature is a rather standard physical concept, enthalpy is an energy which includes the so-called latent heat at the phase change front, here represented by the constant $L>0$. The constitutive relation between both quantities, enthalpy and temperature, is typical for the Stefan problem and similar problems of change of phase: we have given a very careful derivation based on physical grounds in \cite[Section 2]{DTEnVa20}, see also the monographs \cite{Rub71, Gup18}. This physical understanding is convenient, though not strictly necessary to follow the mathematics of the present paper.

The main feature of the paper is the existence of a {\em free boundary}, separating the region in space-time where $u>0$, i.e., $h>L$, from the space-time region where $u=0$ and $h<L$. The name {\em interface} is also used.
Actually, the constant $L>0$,  called the \emph{latent heat} in the Stefan literature,
makes the equation very degenerate for $h<L$; also, inverting $\Phi$ we may write  $h=\beta(u)$ where $\beta $ is a  multivalued graph at $u=0$. These facts are quite important in the present study. Indeed, it clearly separates this work from the studies on the fractional porous medium equation $\dell_t h+ \FL  h^m=0$ with $m>1$, where the nonlinearity is a strictly increasing function, cf. \cite{DPQuRoVa11, DPQuRoVa12, DPQuRoVa17}.

We recall that when $s=1$ we recover the (One-Phase) Classical Stefan Problem (CSP for short) which has been thoroughly studied, cf. the papers \cite{Kam61, Fri68, Duv73, CaFr79, CaEv83, Vui93, AnCaSa96, QuVa01} and the books \cite{Mei92, Rub71, Fri82, Gup18} among the extensive literature. In that sense we may say that  \eqref{P1} is a natural candidate to be the {\em One-Phase Fractional  Stefan Problem}. In contrast with the CSP, our fractional model involves long-range interactions of  fractional Laplace type, which represent nonlocal diffusions  related to L\'evy flights in the probabilistic description.

In the classical problem $s=1$, the level $h=L$ is called the phase-change level, and we will keep that denomination for our problem. It is usually said in the CSP studies that $\{u>0\}$ denotes the water phase (or hot region) and $\{h<L\}$ the ice phase (or cold region), and that language will also be kept here for intuition. As we have said, the separation between the two regions is called the free boundary. We would like to prove existence of this object and establish its mathematical properties. In particular, proving that the free boundary  is a surface in space-time is  a difficult  part of the problem.

\subsection*{Outline of results}
Let us now give an idea about the contents: After establishing existence, uniqueness  and temperature continuity of a suitable class of very weak solutions (cf. Section \ref{sec.defwp} and Appendix \ref{sec:WellPosednessInLInf}), we want to examine the
relevant properties of these solutions. One of the main features of the CSP is the property of \emph{finite propagation}, whereby compactly supported temperature initial data $u_0:=(h_0-L)_+$  produce temperature solutions with the same property for any positive time. In formulas:
\begin{equation*}
\mbox{supp}\,\{u_0\}\subset B_R(0) \implies \mbox{supp} \{u(\cdot,t)\}\,\subset B_{R+R_1(t)}(0),
\end{equation*}
for a finite (and increasing) function $R_1$ (even more, here $R_1(t)$ will be globally bounded in $t$, but that requirement is not essential and depends on the initial setting). This gives rise to the existence of a free boundary or interface where ice and water separate. We establish the finite propagation property and existence of a free boundary for our problem FSP in terms of the temperature variable $u$ in Section \ref{sec.fpp}. This property is shared by other diffusion problems involving local operators, like the Porous Medium Equation $\dell_t h- \Delta h^m =0$ , $m>1$ \cite{Vaz07}, but it does not hold for the Fractional Porous Medium $\dell_t h+ \FL  h^m =0$ for any $m>0$, nor its generalizations, see e.g. \cite{DPQuRoVa12, DPQuRoVa17}, due to the nonlocality of the fractional operator.

A preliminary tool in the proof of finite propagation is the construction of a special 1-D selfsimilar solution (SSS) of the form $h(x,t)=H(xt^{-1/(2s)})$ exhibiting a change of phase and its corresponding free boundary. This
important solution is thoroughly studied in Section \ref{sec.sss}. We show that the {\em profile function} $H$ is continuous and bounded and keeps the initial $L^\infty $ bounds in time.  The free boundary of $u(x,t)$   takes the form of the  curve
\begin{equation*}
x(t)=\xi_0 t^{\frac{1}{2s}} \qquad\textup{for all}\qquad t\in(0,T),
\end{equation*}
and some finite $\xi_0>0$. Therefore, we obtain superdiffusions for all $0<s<1$, since the time exponent is $\gamma:=1/(2s)>1/2$. Note that $\gamma=1/2$ is the Brownian scaling of the heat equation and the CSP.
The case of linear interface propagation happens for $s=1/2$, a case that turns out to be critical for some of the results.
 It is worth  to remark  that $H$ turns out to be continuous in the whole space, and thus, it does not exhibit a discontinuity at the interface point $\xi_0$. This is a surprising property that is not shared with any other local or nonlocal Stefan problem previously studied in the literature. Generally, even in the nonlocal case, one can only prove that $U:=(H-L)_+$ is continuous (as shown in \cite{AtCa10}).

Moreover, in Section \ref{sec.fpp} we will also prove infinite propagation for the enthalpy $h$ of the FSP \eqref{P1}--\eqref{P1-Init}, a property which is false for the CSP. Finally, in Section \ref{sec.conspos} we will prove conservation of the positivity region of the temperature, an important fact in the qualitative analysis of the solutions that ensures that the free boundary of a general solution does not shrink in time.

Important limit cases are examined in Section \ref{sect.limit}. Indeed, the temperature of the solution of our FSP is, on one hand, bounded above by the solutions of the linear fractional heat equation posed in the whole space with the same initial temperature data, and it is well-known that for the latter infinite propagation holds. On the other hand, the temperature of our FSP is bounded below by the solutions of the Dirichlet problem for the  fractional heat equation posed  in a bounded domain contained in the support of the initial temperature. These two problems serve as comparison on both sides, and both can be obtained from the FSP by letting the latent heat $L$ go to $0^+$ or to $+\infty$ respectively. The limits $s\to0^+$ and $s\to 1^-$ are also considered in Section \ref{sect.limit-s}; in this way, we show that  the FSP connects an ODE type equation for $s=0$ with the Classical Stefan Problem for $s=1$.

In Section \ref{sect.ab}  we prove some large time asymptotic results showing cases of convergence to the SSS we have constructed. Section \ref{sec:Num} is devoted to a numerical study of the FSP. First we propose convergent monotone finite-difference schemes and later we include numerical experiments that illustrate some of the obtained theoretical results, as well as other interesting phenomena. In Section \ref{sec:openprob}, we include some relevant comments and open problems. We comment in particular on the FSP with two phases.  Finally, two Appendices contain statements and proofs of basic material or  technical results.

\noindent {\bf Some related works.}  Fractional Stefan problems are scarcely mentioned in the mathematical literature. The first reference to our model seems to be the work \cite{AtCa10} by Athanasopoulos and Caffarelli where they prove that $u=\Phi(h)\in C(\R^N\times(0,T))$ as long as $h_0\in L^\infty(\R^N)$.

Nonlocal models of Stefan-type which take into account mid-range interactions are studied in \cite{BrChQu12, ChS-G13, CaDuLiLi18, CoQuWo18}.
Of particular interest in our context is \cite{BrChQu12}, where the equation involves convolution type operators: $\dell_t h= J*u- u$ and $ u=(h-1)_+$.
Here the interaction kernel $J$ is assumed to be continuous, compactly supported, radially symmetric,
and with $\int_{\R^N} J(x)\dd x= 1$. In contrast, our fractional model involves long-range interactions of the typical fractional Laplacian type, which are related to L\'evy flights. This difference strongly affects the behaviour of solutions, so that we may say that the theories are different. We will comment in Section \ref{sec:openprob} on the differences and similarities with our problem.

In the applied literature there are a number of references to models derived from the classical Stefan problem by introducing Caputo or Riemann-Liouville fractional derivatives in time, and in some cases also in space and/or the transmission conditions at the interface. See for instance  \cite{Vol14} and references \cite{RoCaTa20, Rys20} for recent work. We are not dealing with such kind of fractional derivatives, which lead to quite different models and results.

\begin{remark}\label{rem:translationofL} \rm
There is a simple modification of the formulation that will make some calculations much easier. We explain it here to avoid possible  confusions.  Namely, if we define the  ``modified enthalpy'' as  ${\overline h}=h-L$, the relation temperature-enthalpy simplifies into $\overline{u}={\overline h}_+=\max\{\overline h, 0\}$, and the equation reads: $\dell_t \overline h+ \FL {\overline h}_+ =0$. So no generality is lost in assuming that $L=0$ and that the negative enthalpies, \ ${\overline h}<0$, define the ice region. Of course, the initial datum is also changed.

We want our solutions to be nonnegative functions or at least bounded from below. Note that any bound from below $h\ge -K$ becomes $\overline h\ge -K-L:=-L_1$. There is another simple alternative  that may be useful. By  putting  $h_1=h+K$, which is the enthalpy above the minimum level, letting $u_1=u$, and considering the problem  for $(h_1,u_1)$ with $L_1=L+K$, this is equivalent to asking for nonnegative enthalpies $h_1\ge 0,$ and the equation stays the same  with a  new nonlinearity: $u_1=(h_1-L_1)_+$.   These remarks will be used in what follows.
\end{remark}

\vskip-0.1cm

\noindent {\bf Notation  and conventions.}
For two functions $f,g$, the notation $f\lesssim g$, $f\gtrsim g$, and $f\sim g$ mean that there exist constants $C>0$ such that $f\leq Cg$, $f\geq Cg$, and $f=Cg$. The notation $f\asymp g$ then simply means that $f\lesssim g$ and $f\gtrsim g$. A modulus of continuity $\Lambda$ is such that $\Lambda(\lambda)\to0$ as $\lambda\to0^+$.  A number of functional spaces appear with usual notations. We just note that  $C_{\textup{b}}(\Omega)$ denotes the space of continuous and bounded functions defined in $\Omega$.\\
 In all the paper\, $s\in(0,1)$ is the order of the fractional Laplacian,  $N\ge 1$ is the spatial dimension (in some sections restricted to $N=1$), and the latent heat $L$ is a positive constant,
 unless we state it explicitly otherwise in some proofs using the translation trick.

\section{Definitions and well-posedness results}
\label{sec.defwp}

We will work in the framework of very weak (or distributional) solutions. The definition of distributional solutions is rather standard. It has been extensively studied in \cite{DTEnJa17a, DTEnJa17b} for a more general family of problems including \eqref{P1}--\eqref{P1-Init}.
\begin{definition}[Bounded very weak solutions]\label{def:distSolfrac}
Given $h_0\in L^\infty(\R^N)$, we say that $h\in L^\infty(Q_T)$ is a \textup{very weak} solution of \eqref{P1}--\eqref{P1-Init} if, for all $\psi\in C_{\textup{c}}^\infty (\R^N \times [0,T))$,
\begin{equation}\label{defeq:distSolFL}
\begin{split}
\int_0^T \int_{\R^N} \big(h \partial_t \psi - \Phi(h)\FL \psi\big)\dd x \dd t +\int_{\R^N} h_0(x) \psi(x,0)\dd x=0.
\end{split}
\end{equation}
\end{definition}

\begin{remark}
An equivalent alternative for \eqref{defeq:distSolFL} is $\dell_th+\FL \Phi(h)=0$ in $\mathcal{D}'(\R^N\times(0,T))$
and
\[
\esslim_{t\to0^+} \int_{\R^N} h(x,t)\psi(x,t)\dd x= \int_{\R^N} h_0(x) \psi(x,0)\dd x \quad \textup{for all} \quad \psi\in C_{\textup{c}}^\infty (\R^N \times [0,T)).
\]
\end{remark}

We  will prove the following well-posedness result of bounded very weak solutions of~\eqref{P1}--\eqref{P1-Init}, as well as the main properties:

\begin{theorem}[Well-posedness of bounded solutions]\label{thm:bdddistsol}
Given $ h_0 \in L^\infty(\R^N)$,
 there exists a unique very weak solution $h\in L^\infty(Q_T)$ of \eqref{P1}--\eqref{P1-Init} with $h_0$ as initial data. Moreover, given two very weak solutions  $h,\hat{h}\in L^\infty(Q_T)$ with initial data $h_0,\hat{h}_0 \in L^\infty(\R^N)$, they have the following properties:
\begin{enumerate}[{\rm (a)}]
\item\label{thmfraclinf2-item1} $\|h(\cdot,t)\|_{L^\infty(\R^N)}\leq \|h_0\|_{L^\infty(\R^N)}\, $ for a.e. $t\in(0,T)$.
\item\label{thmfraclinf2-item2} If $h_0\leq \hat{h}_0$ a.e. in $\R^N$, then $h \leq\hat{h}$ a.e. in $Q_T$.
\item\label{thmfraclinf2-item3}  If $(h_0-\hat{h}_0)^+\in L^1(\R^N)$, then
\[
\int_{\R^N}(h(x,t)-\hat{h}(x,t))^+\dd x\leq \int_{\R^N}(h_0(x)-\hat{h}_0(x))^+\dd x \qquad\text{for a.e.}\qquad t\in(0,T).
\]
\item\label{thmfraclinf2-item4} If $h_0\in L^1(\R^N)$, then $
\int_{\R^N} h(x,t)\dd x=\int_{\R^N} h_0(x)\dd x$ for a.e. $t\in(0,T)$.

\item\label{thmfraclinf2-item5} If $\|h_0(\cdot+\xi)-h_0\|_{L^1(\R^N)}\to 0$ as $|\xi|\to 0^+$, then $h\in C([0,T]:L^1_{\textup{loc}}(\R^N))$. Moreover, for every $t,s\in[0,T]$ and every compact set $K\subset\R^N$,
$$
\|h(\cdot,t)-h(\cdot,s)\|_{L^1(K)}\leq \Lambda(|t-s|)
$$
where $\Lambda$ is a modulus of continuity depending on $K$ and $\|h_0(\cdot+\xi)-h_0\|_{L^1(\R^N)}$.
\end{enumerate}
\end{theorem}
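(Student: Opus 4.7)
The plan is to obtain \eqref{P1}--\eqref{P1-Init} as the vanishing-viscosity limit of a sequence of nondegenerate nonlocal diffusion equations to which the general theory of \cite{DTEnJa17a,DTEnJa17b} applies directly. More precisely, I would replace $\Phi$ by a family of smooth strictly increasing approximations $\Phi_\varepsilon$, for instance $\Phi_\varepsilon(r):=\Phi(r)+\varepsilon r$, so that $\Phi_\varepsilon\to\Phi$ locally uniformly while remaining Lipschitz with constant $1+\varepsilon$. The regularized problem $\partial_t h_\varepsilon+\FL\Phi_\varepsilon(h_\varepsilon)=0$ with datum $h_0$ falls in the framework of \cite{DTEnJa17a,DTEnJa17b} and there admits a unique bounded very weak solution $h_\varepsilon$ satisfying, at the approximate level, all of (a)--(e). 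In particular, the uniform $L^\infty$ bound and the $L^1$-contraction hold independently of $\varepsilon$.

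With these approximate properties in hand, I would pass to the limit $\varepsilon\to 0^+$. The uniform bound $\|h_\varepsilon\|_{L^\infty(Q_T)}\leq\|h_0\|_{L^\infty(\RN)}$ together with the $L^1$-contraction applied to space translates yields a uniform spatial modulus of continuity in $L^1_{\textup{loc}}$ when $h_0$ is translation-continuous; time equicontinuity is then obtained by testing the equation against a smooth mollifier and using the uniform bound on $\FL\Phi_\varepsilon(h_\varepsilon)$ against fixed test functions. A Kolmogorov-type argument extracts a subsequence converging in $C([0,T];L^1_{\textup{loc}}(\RN))$ and a.e. to a limit $h$. Since $\Phi$ is Lipschitz and $\Phi_\varepsilon\to\Phi$ locally uniformly, $\Phi_\varepsilon(h_\varepsilon)\to\Phi(h)$ in $L^1_{\textup{loc}}$, which suffices to pass to the limit in the distributional identity \eqref{defeq:distSolFL}. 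Properties (a)--(d) are preserved under such limits by standard choices of test function and monotone comparison, and (e) is the quantitative form of the uniform control just described. For general $L^\infty$ data not satisfying any translation hypothesis, one first mollifies $h_0$, solves for the mollified datum, and then uses (c) to pass to the limit in the initial data.

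Uniqueness is a direct consequence of (c): two very weak solutions with integrable difference of initial data must coincide, and the genuinely $L^\infty$ case is reduced to this by truncation/localization via a standard doubling-of-variables argument (which is available because $\Phi$ is nondecreasing and continuous). Alternatively, one may appeal directly to the uniqueness results in \cite{DTEnJa17a,DTEnJa17b}, which cover Lipschitz, nondecreasing nonlinearities including $\Phi(h)=(h-L)_+$.

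The main obstacle I expect is establishing the $L^1$-contraction estimate (c) cleanly at the degenerate limit level, since the formal Kato inequality $\partial_t(h-\hat h)^+ + \FL\bigl((\Phi(h)-\Phi(\hat h))\mathbf{1}_{\{h>\hat h\}}\bigr)\leq 0$ requires care when $\Phi$ is constant on $(-\infty,L]$; one must use regularized sign approximations $\sgn_\delta$ and the monotonicity of $\Phi$ to exploit the Stroock--Varopoulos-type inequality $\int \FL\Phi(h)\,\sgn_\delta(h-\hat h)\,\dd x\geq 0$ in the limit $\delta\to 0^+$. Once (c) is secured—either intrinsically or inherited from the nondegenerate approximations—the remaining properties follow by now-standard arguments, and the theorem is proved modulo the technical details that the authors defer to Appendix \ref{sec:WellPosednessInLInf}.
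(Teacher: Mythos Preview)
Your approach differs from the paper's in two respects, and each hides a genuine gap.

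\textbf{Existence.} Regularizing $\Phi$ to $\Phi_\varepsilon$ does not place the problem in the framework of \cite{DTEnJa17a,DTEnJa17b}: those results require the \emph{data} to lie in $L^1\cap L^\infty$, and for $h_0$ merely in $L^\infty(\R^N)$ this fails regardless of how nondegenerate $\Phi_\varepsilon$ is. Mollifying $h_0$ does not help either, since mollification preserves only the $L^\infty$ bound and not integrability. The paper's device (Theorem~\ref{thm:Linfty-dist}) is instead to truncate the data: split $v_0=v_0^+-v_0^-$ and cut each part to a ball of its own radius,
\[
v_0^{\rho,\eta}=v_0^+\,\indik_{B_\rho}-v_0^-\,\indik_{B_\eta}\in L^1\cap L^\infty.
\]
The point of using two separate radii is that, by comparison, the resulting $L^1\cap L^\infty$ solutions $v^{\rho,\eta}$ are monotone increasing in $\rho$ and decreasing in $\eta$, so one obtains a pointwise limit (first $\rho\to\infty$, then $\eta\to\infty$) with no compactness argument whatsoever; dominated convergence then suffices to pass to the limit in \eqref{defeq:distSolFL}. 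Your Kolmogorov route, by contrast, only produces compactness under the translation hypothesis of (e), so the general $L^\infty$ case is not covered. The regularization of $\Phi$ is thus both unnecessary (the $L^1\cap L^\infty$ theory already handles the degenerate $\Phi$) and insufficient (it does not address the integrability of the data).

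\textbf{Uniqueness.} Property (c) is established only for the \emph{constructed} solutions, so invoking it to conclude uniqueness among \emph{all} very weak $L^\infty$ solutions is circular. Nor do the results of \cite{DTEnJa17a,DTEnJa17b} suffice: as noted in the Remark following Theorem~\ref{thm:Linfty-dist}, they give uniqueness only under the extra hypothesis $h-\hat h\in L^1(Q_T)$, which is unavailable for two arbitrary bounded solutions. The paper relies on the separate uniqueness theorem of \cite{GrMuPu19}, specific to $\FL$ with locally Lipschitz $\Phi$; only once that is in hand do the properties (a)--(e), proved for the constructed solution, transfer to every very weak solution.
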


Uniqueness (and existence) of bounded very weak solutions of  \eqref{P1}--\eqref{P1-Init} is established in \cite{GrMuPu19}, where $\Phi$ can be any nondecreasing locally Lipschitz function.

The proof of existence and properties \eqref{thmfraclinf2-item1}--\eqref{thmfraclinf2-item5} (for a larger class of nonlocal operators) is delayed to  Appendix \ref{sec:WellPosednessInLInf}, see Theorem \ref{thm:Linfty-dist}.  We proceed in this way in order to concentrate immediately on the existence of free boundaries and propagation properties.

We will also use a basic regularity result valid for our class of solutions, taken from \cite{AtCa10}.
\begin{theorem}\label{thmfrac-cont} Under the  assumptions of Theorem \ref{thm:bdddistsol},
 the temperature $u=\Phi(h)$ is continuous, $u\in C(\R^N\times(0,T))$, with a uniform modulus of continuity  for $t\geq \tau>0$.  Additionally, if $\Phi(h_0)\in C_{\textup{b}}( \Omega)$ for some open set $\Omega\subset \R^N$, then $\Phi(h)\in C_{\textup{b}}(\Omega\times[0,T))$.
\end{theorem}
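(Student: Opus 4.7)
The plan is to treat the two assertions separately: the interior continuity is quoted from the literature, while continuity up to $t=0$ on $\Omega$ is obtained by a comparison/barrier argument built on top of it. I would cite \cite{AtCa10} for the first assertion rather than reproduce it. Their strategy is a nonlocal De Giorgi--Nash--Moser iteration: in any parabolic cylinder $Q_r(x_0,t_0) \subset \R^N \times [\tau,T]$, one establishes an oscillation-reduction estimate $\mathrm{osc}_{Q_{r/2}}\, u \leq \theta\, \mathrm{osc}_{Q_r}\, u$ with $\theta \in (0,1)$ depending only on $N$, $s$, $L$, and $\|h_0\|_{L^\infty}$. The degeneracy at the phase transition $\{h = L\}$ is handled by splitting into the alternatives ``$u>0$ on a large portion of $Q_r$'' and ``$u \equiv 0$ on a large portion of $Q_r$'', in each case using the equation $\dell_t h + \FL u = 0$ and the complementarity structure $u\cdot (L+u-h)=0$; the far-field tails of $\FL$ are absorbed using the global $L^\infty$ bound from Theorem \ref{thm:bdddistsol}\eqref{thmfraclinf2-item1}.

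For the extension of continuity down to $t=0$ on $\Omega$, I would use a two-sided barrier argument. Fix $x_0 \in \Omega$ and $\veps > 0$, and pick $\delta>0$ with $\overline{B_\delta(x_0)} \subset \Omega$ and $|\Phi(h_0) - \Phi(h_0)(x_0)| \leq \veps$ on $B_\delta(x_0)$. I would construct bounded initial data $h_0^\pm \in L^\infty(\R^N)$ satisfying $h_0^- \leq h_0 \leq h_0^+$ on $\R^N$, with $\Phi(h_0^\pm)$ globally continuous and bounded on $\R^N$, and with $\Phi(h_0^\pm)$ identically equal to the constants $\Phi(h_0)(x_0) \pm \veps$ on a small neighborhood of $x_0$. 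By Theorem \ref{thm:bdddistsol}\eqref{thmfraclinf2-item2} the associated very weak solutions satisfy $u^- \leq u \leq u^+$ a.e.\ in $Q_T$. Applying the interior continuity already proved to $h^\pm$ and exploiting the locally constant structure of $\Phi(h_0^\pm)$ near $x_0$ (together with the distributional initial trace built into Definition \ref{def:distSolfrac}) gives $u^\pm(x,t) \to \Phi(h_0)(x_0) \pm \veps$ as $(x,t) \to (x_0, 0^+)$. Squeezing $u$ between $u^\pm$ yields $\limsup_{(x,t)\to(x_0,0^+)} |u(x,t)-\Phi(h_0)(x_0)| \leq \veps$, and sending $\veps \to 0^+$ gives continuity at $(x_0,0)$. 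Boundedness on $\Omega \times [0,T)$ is inherited from Theorem \ref{thm:bdddistsol}\eqref{thmfraclinf2-item1}.

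The main obstacle is the interior continuity step: it is genuinely nonlocal, must absorb the long-range tails of $\FL$, and must cope with the multivalued graph $\beta = \Phi^{-1}$ at $u=0$; this is exactly the content of \cite{AtCa10} and I would not attempt to redo it. The barrier part is comparatively soft, relying only on the comparison principle and the first assertion. The only delicate point there is to select $h_0^\pm$ so that $\Phi(h_0^\pm)$ is regular enough globally for the interior modulus from \cite{AtCa10} to propagate down to $t=0$ at $x_0$; taking $\Phi(h_0^\pm)$ constant in a neighborhood of $x_0$ and a simple monotone global extension is a convenient choice.
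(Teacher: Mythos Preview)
Your attribution of the interior continuity to \cite{AtCa10} is correct, but you miss what the paper's proof is actually doing. The paper does \emph{not} rehearse the De Giorgi--Nash--Moser iteration; instead its entire argument is devoted to the identification step you skip: \cite{AtCa10} builds solutions as limits of approximations $u_\veps$ obtained by regularizing $\beta=\Phi^{-1}$ into nondegenerate $\beta_\veps$, proves equicontinuity of the $u_\veps$ for $t\ge\tau$, and passes to the limit. The paper then checks that the limit pair $(h,u)$ is a very weak solution in the sense of Definition~\ref{def:distSolfrac}, and invokes the uniqueness result for bounded very weak solutions to conclude that this limit coincides with \emph{their} solution. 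Without this step, quoting \cite{AtCa10} only gives you continuity of some solution, not of the very weak solution produced by Theorem~\ref{thm:bdddistsol}. Your sketch of the oscillation decay is fine as a description of what happens inside \cite{AtCa10}, but it does not address this point.

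For the continuity down to $t=0$ on $\Omega$, your barrier idea is natural (and more explicit than the paper's bare ``easy''), but as written it is circular. You need $u^\pm(x,t)\to \Phi(h_0)(x_0)\pm\veps$ as $(x,t)\to(x_0,0^+)$, and you propose to get this from ``interior continuity already proved'' plus the distributional initial trace. But interior continuity is only for $t>0$, with a modulus that may degenerate as $\tau\to0^+$, and the trace in Definition~\ref{def:distSolfrac} is an averaged statement about $h$, not a pointwise statement about $u=\Phi(h)$. So what you are really invoking is the second assertion of the theorem applied to $h^\pm$ (with $\Omega=\R^N$, since you made $\Phi(h_0^\pm)$ globally continuous), which is precisely what you are proving. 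To close the loop you must establish the base case---continuity at $t=0$ for the barriers---by an independent argument: either via explicit comparison functions whose behaviour as $t\to0^+$ is already known (e.g.\ solutions of the linear fractional heat equation with continuous data, which bound $u$ from above as in the later Theorem~\ref{thm:subsupersol}), or by going back to the smooth approximations $u_\veps$ used in the first part, which are classical and hence continuous on $\R^N\times[0,T)$, and passing to the limit there.
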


For the reader's convenience we explain at the end of Appendix \ref{sec:WellPosednessInLInf} how the result \cite{AtCa10} is applied to our solutions.

\section{Selfsimilar solutions}
\label{sec.sss}

The Fractional Stefan Problem \eqref{P1} is invariant under certain space-time scalings. Choosing appropriate initial data, we will get selfsimilar solutions of the form
\begin{equation*}
h(x,t)=t^{-\alpha}H(x\,t^{-\beta}),
\end{equation*}
with suitable selfsimilarity constants $\alpha$ and $\beta$, and profile function $H(\xi)$, $\xi=x\,t^{-\beta}$. It is a well-known fact  that a good understanding of this kind of solutions will also allow us to better understand many relevant properties of general solutions for many classes of nonlinear evolution problems, see the classical book \cite{Bar96}.
In the case of the Classical Stefan Problem a prominent role in the theory is played by the existence of a selfsimilar solution in one space dimension  $N=1$ with exponents $\alpha=0$  and  $\beta=1/2$.

In the case of \eqref{P1}--\eqref{P1-Init}, it is immediate from the equation that the acceptable choice of $\beta$ is $1/(2s)$. Here, we will also accept the second choice $\alpha=0$ which means conservation of $L^\infty$ bounds.
The following result describes existence and properties of the selfsimilar profile for the fractional version. It points to the main differences and similarities between the classical and fractional setting. One of the common main properties is the existence of a finite free boundary.

\begin{theorem}[Properties of the selfsimilar profile in $\R$]\label{thm:SS-all}
Assume
$N=1$,  and $P_1,P_2>0$. Consider the initial data
\begin{equation*}
h_0(x):=\begin{cases}
L+P_1  \qquad\qquad&\textup{if}\qquad   x\leq0\\
L-P_2  \qquad\qquad&\textup{if}\qquad   x>0,
\end{cases}
\end{equation*}
and let $h\in L^\infty(Q_T)$ be the corresponding very weak solution of \eqref{P1}--\eqref{P1-Init}.  Then:
\begin{enumerate}[\rm(a)]
\item\label{thm-SS-item1}  \textup{(Profile)} $h$ and $u$ are selfsimilar with formulas
 $$
 h(x,t)=H(x t^{-\frac{1}{2s}}), \quad
 u(x,t)=U(x t^{-\frac{1}{2s}})\quad \mbox{ \textup{for all } \  $(x,t)\in \R\times(0,T)$},
 $$
\noindent where the  selfsimilar profiles $H$ and $U=(H-L)_+$ satisfy the 1-D nonlocal equation:
\begin{equation}\label{P2SS}\tag{SS1}
-\frac{1}{2s} \xi H'(\xi)+ \FL U(\xi)=0 \qquad \textup{in} \qquad  \mathcal{D}'(\R).
\end{equation}
\item\label{thm-SS-item2}  \textup{(Boundedness and limits)} $L-P_2\leq H(\xi)\leq L+P_1$ for all $\xi\in\R$ and
\[
\displaystyle\lim_{\xi\to-\infty}H(\xi)=L+P_1 \qquad  \textup{and} \qquad \displaystyle\lim_{\xi\to+\infty}H(\xi)=L-P_2.
\]
\item\label{thm-SS-item3} \textup{(Free boundary)} There exists a unique finite $\xi_0>0$  such that $H(\xi_0)=L$.  This means that the free boundary of the space-time solution $h(x,t)$  at the level $L$  is given by the  curve
    \begin{equation*}
    x=\xi_0\,t^{\frac{1}{2s}} \qquad\textup{for all}\qquad t\in(0,T).
    \end{equation*}
Moreover, $\xi_0>0$ depends only on $s$ and the ratio $P_2/P_1$ (but not on $L$).

\item\label{thm-SS-item5}  \textup{(Monotonicity)} $H$ is nonincreasing. Moreover, $H$ is strictly decreasing in $[\xi_0,+\infty)$.
\item\label{thm-SS-item4} \textup{(Regularity)} $H\in C_\textup{b}(\R)$.  Moreover, $H\in C^\infty((\xi_0,+\infty))$, $H\in C^{1,\alpha}((-\infty,\xi_0))$ for some $\alpha>0$,   and \eqref{P2SS} is satisfied in the classical sense in $\R\setminus\{\xi_0\}$.

\item\label{thm-SS-item6} \textup{(Behaviour near the free boundary)} For $\xi$ close to $\xi_0$ and  $\xi\leq \xi_0$,
\[
U(\xi)=H(\xi)-L = O((\xi_0-\xi)^{s}).
\]
\item\label{thm-SS-item7}  \textup{(Fine behaviour at $+\infty$)} For all $  \xi> \xi_0$, we have $H'(\xi)<0$ and, for $\xi\gg \xi_0$
$$
H(\xi)- (L-P_2)\asymp 1/|\xi|^{2s}, \qquad H'(\xi)\asymp-1/|\xi|^{1+2s}.
$$

 \item\label{thm-SS-item8}  \textup{(Mass transfer)}
 Moreover, if $s>1/2$ then
\[
\int_{-\infty}^0\big( (L+P_1)- H(\xi)\big)\dd \xi= \int_0^\infty \big(H(\xi)-(L-P_2)\big) \dd \xi<+\infty.
\]
\end{enumerate}
\end{theorem}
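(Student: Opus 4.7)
The plan is to combine scale invariance with uniqueness of bounded very weak solutions (Theorem \ref{thm:bdddistsol}) to obtain the selfsimilar reduction, and then to extract the finer properties through comparison and fractional obstacle-type regularity. I would proceed in four steps. First, the initial datum is invariant under $x \mapsto \lambda x$ for every $\lambda>0$, while \eqref{P1} is invariant under $(x,t)\mapsto(\lambda x, \lambda^{2s}t)$; uniqueness then forces $h(x,t) = h(\lambda x, \lambda^{2s} t)$ a.e., and choosing $\lambda = t^{-1/(2s)}$ gives the selfsimilar form $h(x,t) = H(xt^{-1/(2s)})$ with $H(\cdot):=h(\cdot,1)$. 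Substituting into \eqref{defeq:distSolFL} and integrating by parts in time yields \eqref{P2SS}, proving (a). The $L^\infty$ bound in (b) follows from comparison (Theorem \ref{thm:bdddistsol}(b)) against constant data $L\pm P_j$, and monotonicity (d) by the same comparison with the translates $h_0(\cdot+\varepsilon)$. Bounded monotonicity then gives existence of the one-sided limits in (b); that these equal $L\pm P_j$ comes from a squeezing argument (smooth monotone approximations of $h_0$ combined with the $L^1_{\textup{loc}}$-time continuity of Theorem \ref{thm:bdddistsol}(e)).

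Second, continuity of $U$ (Theorem \ref{thmfrac-cont}) together with monotonicity of $H$ forces $\{\xi : H(\xi)>L\}$ to be a left half-line $(-\infty,\xi_0)$, reducing (c) to showing $\xi_0\in(0,+\infty)$: finiteness follows from the limit at $+\infty$, and positivity $\xi_0>0$ from the finite-propagation result established in Section \ref{sec.fpp} applied to the initial datum (which vanishes, in the sense $u_0=0$, on $x>0$). The dependence of $\xi_0$ only on $s$ and $P_2/P_1$ arises because the linear rescaling $\tilde h:=(h-L)/P_1$ maps the problem to one with latent heat $0$ and parameters $(1,P_2/P_1)$. For (e), rearrange \eqref{P2SS} into $H'(\xi) = (2s/\xi)\,\FL U(\xi)$. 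On the cold side $(\xi_0,+\infty)$, $U\equiv 0$ yields the explicit formula $\FL U(\xi) = -c_{1,s}\int_{-\infty}^{\xi_0} U(y)|\xi-y|^{-1-2s}\dd y$, which is smooth and strictly negative, so bootstrap gives $H\in C^\infty((\xi_0,+\infty))$ together with the strict monotonicity there. On the hot side, the equation $-(\xi/2s)U' + \FL U = 0$ with $U>0$ gives interior $C^{1,\alpha}$ regularity by a standard fractional-elliptic bootstrap. Part (f), $U(\xi) = O((\xi_0-\xi)^s)$, is then obtained via obstacle-type barriers as in \cite{AtCa10}, comparing $U$ near $\xi_0$ with explicit $s$-harmonic profiles supported on a half-line.

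The main obstacle is the global continuity statement $H\in C_{\textup{b}}(\R)$: unlike most other Stefan-type problems, the fractional profile does \emph{not} jump across the free boundary. My argument would use (f) in a crucial way. The H\"older bound $U(\xi)\le C(\xi_0-\xi)^s$ implies, by a direct computation of the singular integral (substitution $u=\xi_0-y$, $u=(\xi-\xi_0)v$ on the contribution from $y$ near $\xi_0$), that $\FL U(\xi)$ has at worst a $|\xi-\xi_0|^{-s}$ singularity as $\xi\to\xi_0^+$, hence is locally integrable in a neighbourhood of $\xi_0$. The distributional identity $-(\xi/2s)H' + \FL U = 0$ then forces the distribution $H'$ to be locally integrable near $\xi_0$ (recall $\xi_0>0$), so $H$ is absolutely continuous across $\xi_0$, ruling out any jump. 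Since $H(\xi_0^-)=L$ by continuity of $U=(H-L)_+$ and $H(\xi_0^+)\le L$ by monotonicity, continuity forces $H(\xi_0^+)=L$.

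For the far-field asymptotics (g), I would use the explicit formula $\FL U(\xi) = -c_{1,s}\int_{-\infty}^{\xi_0} U(y)|\xi - y|^{-1-2s}\dd y$ for $\xi>\xi_0$. Since $U(y)\to P_1>0$ as $y\to-\infty$, splitting the integral into near and far parts gives $\FL U(\xi)\asymp -\xi^{-2s}$, hence $H'(\xi)\asymp -\xi^{-1-2s}$; integration from $\xi$ to $+\infty$ then yields $H(\xi)-(L-P_2)\asymp \xi^{-2s}$. A symmetric computation, examining $\FL U$ for $\xi\ll 0$ where the dominant contribution comes from $y>\xi_0$ with $U(y)=0$ against $U(\xi)\to P_1$, handles the $-\infty$ side. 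Both tails are integrable over $\R$ iff $2s>1$, producing the dichotomy in (h). For $s>1/2$, the equality of the two integrals follows from $L^1$-mass conservation (Theorem \ref{thm:bdddistsol}(d)) applied to $h(\cdot,t)-h_0 \in L^1(\R)$ (which is integrable thanks to the tail bounds just established), and rewriting in the selfsimilar variable $x=\xi\,t^{1/(2s)}$ cancels the common factor $t^{1/(2s)}$ to produce the claimed identity. For $s\le 1/2$ the $\asymp$-lower bound in (g) forces both integrals to diverge.
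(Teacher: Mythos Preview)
Your overall plan matches the paper's closely, and most steps are sound. There is, however, one genuine gap: your argument for the strict positivity $\xi_0>0$ in part~(c).

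You propose to deduce $\xi_0>0$ from the finite-propagation result of Section~\ref{sec.fpp}. This is circular: the proof of Lemma~\ref{thm:NFiniteSpeed2} (and hence Theorem~\ref{coro:NFiniteSpeed2}) uses the selfsimilar solution of Corollary~\ref{cor:NSS-all} as a barrier, and that corollary is a consequence of Theorem~\ref{thm:SS-all} itself. Even setting the circularity aside, finite propagation bounds the support of $u(\cdot,t)$ from \emph{above}; it says nothing about the free boundary moving strictly to the right, so it cannot yield $\xi_0>0$. A direct comparison with the stationary subsolution $\hat h(x,t)=L\,\indik_{\{x\le 0\}}$ (as in Lemma~\ref{eq:uniqueinterphase}) gives only $\xi_0\ge 0$.

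The paper's actual proof that $\xi_0>0$ (Lemma~\ref{lem:posinterp}) is an independent contradiction argument and is the most delicate step in the theorem. One assumes $\xi_0=0$, applies a Hopf-type lower bound (via the Green function for $\FL$ on a bounded interval, with the right-hand side $\frac{1}{2s}\xi H'(\xi)\ge 0$) to obtain $U(\xi)\gtrsim|\xi|^s$ for $\xi<0$ near $0$, and then computes directly that this forces $-\FL U(\xi)\gtrsim\xi^{-s}$ for small $\xi>0$. Feeding this into $H'(\xi)=(2s/\xi)\,\FL U(\xi)$ on the ice side and integrating from $\xi_1$ down to $0^+$ gives $H(0^+)=+\infty$, contradicting boundedness. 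No version of this appears in your outline, and it cannot be replaced by the forward reference you give. Note also that your continuity argument for $H$ across $\xi_0$ --- which is otherwise correct, and is in fact a slightly cleaner repackaging of the paper's cutoff computation in Section~\ref{sec:continterphase} --- explicitly divides by $\xi$ near $\xi_0$, so it too collapses if $\xi_0=0$.
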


\begin{remark}\label{rem:nodeponL} \rm
Actually, the value of $L$ is not relevant mathematically speaking. We could thus work with $L=0$ as discussed in Remark \ref{rem:translationofL}. A particular consequence is that  $\xi_0$ cannot depend on $L$, but just on $s$, $P_1$ and $P_2$.  Moreover,  it only depends on $s$ and the quotient $P_2/P_1$: Note that if $L=0$, then $\hat{h}:=K h$ is also a solution of \eqref{P1}--\eqref{P1-Init} with initial data $\hat{h}_0(x)=K h_0(x)$. Thus the free boundary point $\xi_0$ coincides for $h$ and $\hat{h}$. Taking $K=1/P_1$  we get that
\begin{equation*}
\hat{h}_0(x):=\begin{cases}
1   \qquad\qquad&\textup{if}\qquad   x\leq0\\
-P_2/P_1  \qquad\qquad&\textup{if}\qquad   x>0,
\end{cases}
\end{equation*}
showing the desired dependence on $P_2/P_1$.
\end{remark}

The proof of Theorem \ref{thm:SS-all} is one of the main topics of this paper, and it will be done step by step (not necessarily in the order stated),  in a series of partial results distributed from Section \ref{sec:prelimfacts} to Section \ref{sec:decayandmasstransfer}.

\subsection{Results in several dimensions}

It is easy to extend this selfsimilar solution to higher dimensions. Essentially, the multi-D selfsimilar solution is a constant extension of $H$ in the new spatial variables.

\begin{corollary}[Properties of the selfsimilar profile in $\R^N$]\label{cor:NSS-all}
Assume
$P_1,P_2>0$, $x,\xi\in \R$, and $x',\xi'\in \R^{N-1}$.  Let $h_0,h,H$ be as in Theorem \ref{thm:SS-all}, and consider the initial condition $\tilde{h}_0\in L^\infty(\R^N)$ given by $\tilde{h}_0(x,x'):=h_0(x)$, i.e.,
\begin{equation*}
\tilde{h}_0(x,x')=\begin{cases}
L+P_1  \qquad\qquad&\textup{if}\qquad   x\leq0\\
L-P_2  \qquad\qquad&\textup{if}\qquad   x>0,
\end{cases}
\end{equation*}
and let $\tilde{h}\in L^\infty(Q_T)$ be the corresponding very weak solution of \eqref{P1}--\eqref{P1-Init}. Then $\tilde{h}$ and $\tilde{u}$ are  selfsimilar and
\begin{enumerate}[{\rm (a)}]
\item \textup{(Profile)} $\tilde{h}(x,x',t)=\tilde{H}(x t^{-1/(2s)}, x't^{-1/(2s)})$ for all $(x,x',t)\in \R^N\times(0,T)$ and $\tilde{H}$ satisfies
 \begin{equation*}
-\frac{1}{2s} (\xi,\xi')\cdot \nabla \tilde{H}(\xi,\xi')+ \FL (\tilde{H}(\cdot,\cdot)-L)_+)(\xi,\xi')=0 \quad \textup{in} \quad  \mathcal{D}'(\R^N).
\end{equation*}
\item \textup{(Constant extension)} $\tilde{H}(\xi,\xi')=H(\xi)$ and  $\tilde{U}(\xi,\xi'):=U(\xi)$ for all $(\xi,\xi')\in \R^N$.
\item \textup{(Properties)} For every $\xi'\in \R^{N-1}$,  $\tilde{H}(\cdot,\xi')$ and $\tilde{U}(\cdot,\xi')$ satisfy properties \eqref{thm-SS-item1}--\eqref{thm-SS-item8} of Theorem \ref{thm:SS-all}.
\end{enumerate}
\end{corollary}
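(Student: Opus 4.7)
The plan is to construct an explicit candidate by ``lifting'' the one-dimensional profile $H$ of Theorem \ref{thm:SS-all} trivially in the extra coordinates, verify that it is a very weak solution of \eqref{P1}--\eqref{P1-Init} on $\R^N$, and then invoke uniqueness from Theorem \ref{thm:bdddistsol} to identify it with $\tilde h$. The single computational ingredient is the following \emph{dimensional-reduction identity}: if $V(x,x'):=v(x)$ depends only on the first coordinate, then $\FL V(x,x')=\FL v(x)$, where on the left $\FL$ acts on $\R^N$ and on the right on $\R$. This is transparent via the Fourier symbol $|\zeta|^{2s}$, since a function independent of $x'$ has its Fourier transform supported on the $\zeta_1$-axis; alternatively, one integrates out the $N-1$ transverse variables in the principal-value representation and verifies that the quotient $c_{N,s}/c_{1,s}$ is exactly what is needed to absorb the transverse integral, leaving no stray constant.

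Next, I would define $\tilde h(x,x',t):=H(xt^{-1/(2s)})$ and check that it is a bounded very weak solution of \eqref{P1}--\eqref{P1-Init} with datum $\tilde h_0$. For $\psi\in C_\textup{c}^\infty(\R^N\times[0,T))$ set $\bar\psi(x,t):=\int_{\R^{N-1}}\psi(x,x',t)\dd x'$; then $\bar\psi\in C_\textup{c}^\infty(\R\times[0,T))$ and is a legitimate 1-D test function. Applying Fubini to the first and third integrals in \eqref{defeq:distSolFL}, and the dimensional-reduction identity together with self-adjointness of $\FL$ on the middle term, the identity on $\R^N$ tested against $\psi$ collapses to the corresponding 1-D identity tested against $\bar\psi$, which holds because $h$ itself is the very weak solution in Theorem \ref{thm:SS-all}. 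Uniqueness in Theorem \ref{thm:bdddistsol} then forces the actual very weak solution on $\R^N$ to equal $\tilde h$, which is part (b). Part (a) is immediate from $\tilde h(x,x',t)=\tilde H(xt^{-1/(2s)},x't^{-1/(2s)})$ with $\tilde H(\xi,\xi'):=H(\xi)$: since $\nabla\tilde H=(H'(\xi),0,\dots,0)$ the drift collapses to $(\xi,\xi')\cdot\nabla\tilde H=\xi H'(\xi)$, and by the dimensional reduction $\FL\tilde U=\FL U$, so the self-similar equation on $\R^N$ reduces exactly to \eqref{P2SS} already satisfied by $(H,U)$.

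Finally, part (c) is a slice-by-slice consequence: for every $\xi'\in\R^{N-1}$ the profile $\tilde H(\cdot,\xi')$ coincides with $H$ and $\tilde U(\cdot,\xi')$ with $U$, so the properties listed in Theorem \ref{thm:SS-all} transfer verbatim, the free boundary now being the hyperplane $x=\xi_0 t^{1/(2s)}$ in space-time. The only step that is not purely formal is the dimensional-reduction identity, and the main subtlety there is the bookkeeping of the normalizing constants $c_{N,s}$; once the Fourier viewpoint is adopted this is essentially trivial, so I do not foresee any serious obstacle.
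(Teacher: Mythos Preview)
Your proposal is correct and coincides with the paper's approach: the paper dispatches the corollary in one line as ``a trivial consequence of Theorem \ref{thm:SS-all} and the fact that $(-\Delta_N)^{s}\tilde U(\xi,\xi')=(-\Delta_1)^{s}U(\xi)$'' (their Lemma \ref{lem:secSol}), which is exactly your dimensional-reduction identity, and the identification with $\tilde h$ via uniqueness is implicit. Your write-up simply spells out the Fubini/self-adjointness step that the paper leaves to the reader.
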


Corollary \ref{cor:NSS-all} is a trivial consequence of Theorem \ref{thm:SS-all} and the fact that $
(-\Delta_N)^{s} \tilde{U}(\xi,\xi')=(-\Delta_1)^{s} U (\xi)$ (cf. Lemma \ref{lem:secSol}) where $-\Delta_N$ and $-\Delta_1$ denote the Laplacian in $\R^N$ and in $\R$ respectively.

\begin{remark}\label{rem:hyperplanes} \rm
Since equation \eqref{P1} is translationally and rotationally invariant, we actually have a bigger family of selfsimilar solutions arising from Corollary \ref{cor:NSS-all}. Indeed, given any hyperplane $\mathcal{H}\subset \R^N$, consider open sets $\mathcal{H}^+, \mathcal{H}^-$ such that $\mathcal{H}^+\cup \mathcal{H}^-=\R^N\setminus \mathcal{H}$ and $\mathcal{H}^+\cap \mathcal{H}^-=\emptyset$. E.g., take $\mathcal{H}=\{0\}\times \R^{N-1}$, $\mathcal{H}^-=(-\infty,0)\times  \R^{N-1}$, and $\mathcal{H}^+=(0,+\infty)\times  \R^{N-1}$.  Indeed, the initial data
\begin{equation}\label{eq:HeavySide1roratedtranslated}
\tilde{h}_0(y):=\begin{cases}
L+P_1  \qquad\qquad&\textup{if}\qquad   y\in \overline{\mathcal{H}^-}\\
L-P_2  \qquad\qquad&\textup{if}\qquad   y \in \mathcal{H}^+,
\end{cases}
\end{equation}
also produces a selfsimilar solution that is a translation and rotation of the selfsimilar solution $\tilde{h}$ described in Corollary \ref{cor:NSS-all}.

\end{remark}

\subsection{Preliminary facts on selfsimilarity}\label{sec:prelimfacts}

First we will establish some general properties of the selfsimilar profile in $\R^N$.
Note that the equation has the following scaling invariance:
\begin{lemma}[Space-time selfsimilarity of the equation]\label{lem:ssofequation}
Let $h\in L^\infty(Q_T)$ be a very weak solution of \eqref{P1}--\eqref{P1-Init} with initial data $h_0\in L^\infty(\R^N)$. Then for all $a>0$ the function $h_a(x,t):=h(ax, a^{2s} t)$ is a very weak solution of \eqref{P1}--\eqref{P1-Init} with initial data $h_{0,a}(x):= h(ax)$.
\end{lemma}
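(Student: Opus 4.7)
The statement is a straightforward scaling computation, so my approach would be to simply verify the very weak formulation of Definition \ref{def:distSolfrac} directly by the change of variables $y=ax$, $\tau=a^{2s}t$, exploiting the homogeneity of $\FL$ under dilations. The one nontrivial ingredient is the identity
\[
\FL\bigl[\phi(\cdot/a)\bigr](y)=a^{-2s}\,[\FL\phi](y/a),
\]
which is immediate from the singular integral representation of $\FL$ (a change of variable in the kernel produces the factor $a^{-2s}$).

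The key steps I would carry out are the following. First, fix $a>0$ and a test function $\psi\in C_{\textup{c}}^\infty(\R^N\times[0,T/a^{2s}))$, and define the rescaled test function
\[
\tilde\psi(y,\tau):=\psi\bigl(y/a,\,\tau/a^{2s}\bigr),\qquad (y,\tau)\in\R^N\times[0,T),
\]
which again lies in $C_{\textup{c}}^\infty(\R^N\times[0,T))$ and is therefore admissible against $h$. Second, I would substitute $h_a(x,t)=h(ax,a^{2s}t)$ into each of the three integrals appearing in \eqref{defeq:distSolFL} for $h_a$ and perform the change of variables $y=ax$, $\tau=a^{2s}t$ (so $dx\,dt=a^{-N}a^{-2s}\,dy\,d\tau$). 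The chain rule gives $\partial_t\psi(x,t)=a^{2s}\partial_\tau\tilde\psi(y,\tau)$, and the scaling identity above gives $[\FL\psi(\cdot,t)](x)=a^{2s}[\FL\tilde\psi(\cdot,\tau)](y)$. Each integral therefore acquires a common factor $a^{-N}$, and the $\pm a^{2s}$ factors cancel the Jacobian in time. A similar change of variable on $\int h_{0,a}(x)\psi(x,0)\,dx=\int h_0(ax)\psi(x,0)\,dx$ produces $a^{-N}\int h_0(y)\tilde\psi(y,0)\,dy$.

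Finally, I would factor out the common $a^{-N}$ and observe that the resulting bracket is exactly the very weak formulation for $h$ tested against $\tilde\psi$, and thus vanishes by hypothesis. Since $\psi$ was arbitrary, this shows $h_a$ is a very weak solution with initial datum $h_{0,a}$. Boundedness $h_a\in L^\infty$ is immediate since $h_a$ takes the same values as $h$.

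\textbf{Main obstacle.} There is no real obstacle; the only thing to be careful about is bookkeeping the two separate $a^{2s}$ factors (one from $\partial_t$, one from $\FL$) so that they correctly cancel the time Jacobian $a^{-2s}$, and noting that the $L^\infty$ hypothesis on $h$ makes the application of Fubini and the change of variables automatic (both $h$ and $\Phi(h)$ are bounded, and $\partial_t\psi$, $\FL\psi$ are integrable on the compact support of $\psi$ since $\FL\psi$ has the decay $|x|^{-N-2s}$ at infinity).
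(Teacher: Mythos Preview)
Your proposal is correct and follows essentially the same approach as the paper: a change of variables $y=ax$, $\tau=a^{2s}t$ combined with the homogeneity of $\FL$ under dilations. The only minor difference is that the paper verifies the initial condition separately via the $\esslim$ formulation, whereas you handle it in one stroke through the boundary term in \eqref{defeq:distSolFL}; your version is in fact slightly cleaner and more directly tied to Definition~\ref{def:distSolfrac}.
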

\begin{proof}
Let $\psi\in C_\textup{c}^\infty(Q_T)$, and define $\phi(y,\tau):= \psi(y/a,\tau/b)$ for some $a,b>0$. Then, by changing variables $ax=y$ and $bt=\tau$ and the scaling properties of the time derivative and the fractional Laplacian (cf. Lemma \ref{lem:FLHomogeneity}), we get
\begin{equation*}
\begin{split}
\int_0^T \int_{\R^N} h(ax,bt) \dell_t\psi(x,t)\dd x\dd t
&= \frac{1}{a^N\, b} \int_0^{bT} \int_{\R^N} h(y,\tau) \dell_t\psi\left(\frac{y}{a},\frac{\tau}{b}\right)\dd y\dd \tau\\
&=\frac{1}{a^N}  \int_0^{bT} \int_{\R^N} \Phi(h(y,\tau)) \FL \phi(y,\tau)\dd y\dd \tau\\
&= \frac{b}{a^{2s}} \int_0^{T} \int_{\R^N} \Phi(h(ax,bt)) \FL\psi(x,t) \dd x\dd t,
\end{split}
\end{equation*}
and the choice $b=a^{2s}$ shows identity \eqref{defeq:distSolFL} for $h_a$. We also have,
\begin{equation*}
\begin{split}
\esslim_{t\to0^+} \int_{\R^N} h(a x, bt) \psi(x,t)\dd x
&=\esslim_{\tau\to0^+} \frac{1}{a^N}\int_{\R^N} h(y, \tau) \psi(\frac{y}{a},\frac{\tau}{b})\dd y\\
&=\frac{1}{a^N}\int_{\R^N} h_0(y)\psi(\frac{y}{a},0)\dd y=\int_{\R^N} h_0(ax) \psi(x,0) \dd x.
\end{split}
\end{equation*}
which finishes the proof.
\end{proof}
Once we know the selfsimilarity of the equation, choosing an appropriate initial data will provide us with a selfsimilar solution and a selfsimilar profile.

\begin{lemma}[Existence and uniqueness of a selfsimilar solution]\label{lem:SelfSimilarStructure}
Let $h\in L^\infty(Q_T)$ be the unique very weak solution of \eqref{P1}--\eqref{P1-Init} with initial data $h_0\in L^\infty(\R^N)$. If $h_0(x)=h_0(ax)$ for all $a>0$ and a.e. $x\in \R^N$, then
\[
h(x,t)= h(ax, a^{2s} t) \qquad \textup{for a.e.} \qquad (x,t)\in Q_T \quad \textup{and all} \quad a>0.
\]
In particular, $h(x,t)=h(xt^{-1/(2s)},1)$ for a.e. $(x,t)\in Q_T$.
\end{lemma}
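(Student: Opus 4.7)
The plan is to combine Lemma \ref{lem:ssofequation} (space-time scaling invariance of the equation) with the uniqueness part of Theorem \ref{thm:bdddistsol}. The key observation is that the hypothesis on $h_0$ says precisely that the initial datum is fixed by the rescaling $x\mapsto ax$ for every $a>0$, so the scaled solution must solve the same Cauchy problem as $h$.

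Concretely, fix $a>0$ and set $h_a(x,t):=h(ax,a^{2s}t)$. By Lemma \ref{lem:ssofequation}, $h_a\in L^\infty(Q_T)$ is a very weak solution of \eqref{P1}--\eqref{P1-Init} with initial datum $h_{0,a}(x)=h_0(ax)$. The scaling hypothesis on $h_0$ gives $h_{0,a}=h_0$ a.e., so $h_a$ and $h$ are both bounded very weak solutions of \eqref{P1}--\eqref{P1-Init} issuing from the same datum $h_0$. Uniqueness in Theorem \ref{thm:bdddistsol} then forces $h_a=h$ a.e. in $Q_T$, i.e.\ $h(x,t)=h(ax,a^{2s}t)$ for a.e.\ $(x,t)\in Q_T$, for every fixed $a>0$.

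To obtain the second assertion, I would like to substitute $a=t^{-1/(2s)}$ into the identity above, which formally yields $h(x,t)=h(xt^{-1/(2s)},1)$. The main (mild) obstacle is that the identity $h_a=h$ initially holds only a.e.\ for each fixed $a$, whereas the substitution couples $a$ to the time variable. I would handle this by a Fubini-type argument: the map $(x,t,a)\mapsto h(ax,a^{2s}t)-h(x,t)$ is measurable on $\mathbb{R}^N\times(0,T)\times(0,\infty)$ and vanishes for a.e.\ $(x,t)$ for each fixed $a$, hence vanishes a.e.\ on the product. Slicing along the graph $a=t^{-1/(2s)}$ and using a standard measure-preserving change of variables shows that $h(x,t)=h(xt^{-1/(2s)},1)$ holds for a.e.\ $(x,t)\in Q_T$. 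Alternatively, for the temperature $u=\Phi(h)$, the continuity provided by Theorem \ref{thmfrac-cont} lets one upgrade the a.e.\ identity to a pointwise one on $\mathbb{R}^N\times(0,T)$, and then transfer the selfsimilar structure back to $h$ via $h=u+L\indik_{\{h\ge L\}}$ together with the monotonicity of the level sets inherited from the scaling.

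I expect Lemma \ref{lem:ssofequation} plus uniqueness to carry essentially all the weight; the only genuine care needed is the measure-theoretic step that lets the parameter $a$ depend on $t$.
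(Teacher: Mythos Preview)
Your argument is correct and matches the paper's proof essentially verbatim: apply Lemma \ref{lem:ssofequation} to see that $h_a(x,t)=h(ax,a^{2s}t)$ solves \eqref{P1}--\eqref{P1-Init} with datum $h_0(a\cdot)=h_0$, then invoke uniqueness from Theorem \ref{thm:bdddistsol}, and finally set $a=t^{-1/(2s)}$. The paper simply asserts the last substitution without comment; your Fubini/change-of-variables sketch to justify letting $a$ depend on $t$ is the right instinct and can be made rigorous (pass from ``for all $a$, a.e.\ $(x,t)$'' to ``a.e.\ $(x,t,a)$'', then change variables $(x,t,a)\mapsto(x,t,a^{2s}t)$ and slice at a good time level). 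One caution on your alternative route: the relation $h=u+L\indik_{\{h\ge L\}}$ does \emph{not} recover $h$ from $u$, since $u=0$ gives no information about $h$ in the ice region $\{h<L\}$, so that path does not close.
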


\begin{proof}
We recall that given an initial data $h_0\in L^\infty(\R^N)$, existence and uniqueness of $h$ is given by Theorem \ref{thm:bdddistsol}.

By Lemma \ref{lem:ssofequation} we have that $h(ax,a^{2s}x)$ is a very weak solution of \eqref{P1}--\eqref{P1-Init} with initial data $h_0(ax)$ for all $a>0$. By the scaling property $h_0(ax)=h_0(x)$ we thus get that $h(ax,a^{2s}t)$ is a solution with data $h_0(x)$ for all $a>0$. Uniqueness of very weak solutions implies that $h(x,t)=h(ax,a^{2s}t)$ for all $a>0$. In particular, by choosing $a=t^{-1/(2s)}$ we get the identity.
\end{proof}

We can express the selfsimilar solution in terms of a profile satisfying a stationary equation.

\begin{lemma}[Equation of the profile]\label{lem:profileH2}
Under the assumptions of Lemma \ref{lem:SelfSimilarStructure}, consider the function $H(\xi):=h(\xi,1)$ for a.e. $\xi \in \R^N$. Then we have that $H$ satisfies
\begin{equation}\label{eq:profH}
-\frac{1}{2s} \xi\cdot \nabla H(\xi)+ \FL \Phi(H)(\xi)=0 \qquad \textup{in} \qquad  \mathcal{D}'(\R^N).
\end{equation}

\end{lemma}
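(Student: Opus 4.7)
My plan is to test the very weak formulation of $h$ against separable test functions of the form $\psi(x,t)=\chi(t)\varphi(x)$ with $\chi\in C_\textup{c}^\infty((0,T))$ and $\varphi\in C_\textup{c}^\infty(\R^N)$, and then exploit the selfsimilar structure $h(x,t)=H(xt^{-1/(2s)})$ via a rescaling of $\varphi$ to kill the time dependence. Set
\[
I(t):=\int_{\R^N}h(x,t)\varphi(x)\dd x\qquad\text{and}\qquad J(t):=\int_{\R^N}\Phi(h(x,t))\FL\varphi(x)\dd x.
\]
Since $\chi(0)=0$, the initial-data term in Definition~\ref{def:distSolfrac} vanishes, and we obtain $\int_0^T\chi'(t)I(t)\dd t=\int_0^T\chi(t)J(t)\dd t$, i.e., $I'=-J$ in $\mathcal{D}'((0,T))$.

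After the change of variables $\xi=xt^{-1/(2s)}$, writing $a:=t^{1/(2s)}$,
\[
I(t)=a^{N}\int_{\R^N}H(\xi)\,\varphi(a\xi)\dd\xi,\qquad J(t)=a^{N}\int_{\R^N}\Phi(H)(\xi)\,(\FL\varphi)(a\xi)\dd\xi.
\]
Since the $\xi$-support of $\varphi(a\,\cdot)$ is contained in a fixed ball for $t$ in any compact subinterval of $(0,T)$ and $H\in L^\infty(\R^N)$, one may differentiate $I(t)$ under the integral sign and obtain $I\in C^\infty((0,T))$. Using the product identity $\diver_\xi[\xi\,\varphi(a\xi)]=N\varphi(a\xi)+a\xi\cdot\nabla\varphi(a\xi)$, a direct computation yields
\[
I'(t)=\frac{1}{2s}\,t^{-1}\,a^{N}\int_{\R^N}H(\xi)\,\diver_\xi[\xi\,\varphi(a\xi)]\dd\xi.
\]

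Equating $I'(t)=-J(t)$, cancelling the common factor $a^N$, and setting $\phi(\xi):=\varphi(a\xi)$, the homogeneity of the fractional Laplacian (cf. Lemma~\ref{lem:FLHomogeneity}) gives $(\FL\varphi)(a\xi)=a^{-2s}\FL\phi(\xi)=t^{-1}\FL\phi(\xi)$. Multiplying through by $t$ eliminates the $t$-dependence and produces
\[
\frac{1}{2s}\int_{\R^N}H(\xi)\,\diver(\xi\,\phi(\xi))\dd\xi+\int_{\R^N}\Phi(H)(\xi)\,\FL\phi(\xi)\dd\xi=0,
\]
which is exactly the distributional reformulation of \eqref{eq:profH} after moving $\nabla$ off $H$ by integration by parts. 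Since $\varphi\mapsto\varphi(a\,\cdot)$ is a bijection of $C_\textup{c}^\infty(\R^N)$ for any fixed $a>0$, $\phi$ ranges over every admissible test function as $\varphi$ does, completing the argument.

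The main technical point is that $\FL\varphi$ is \emph{not} compactly supported, so one must justify finiteness and continuity of $J(t)$ and upgrade the distributional identity $I'=-J$ to a pointwise one in order to divide by $t^{-1}$ at a specific time. This is handled by the standard decay bound $|\FL\varphi(x)|\lesssim(1+|x|)^{-N-2s}$ together with $\Phi(H)\in L^\infty(\R^N)$, which yields a uniformly integrable envelope for the integrand in $J(t)$ as $a$ varies on compact subsets of $(0,\infty)$; dominated convergence then gives continuity of $J$, and since $I\in C^\infty$ the identity $I'(t)=-J(t)$ holds at every $t\in(0,T)$, yielding the stationary equation for $H$.
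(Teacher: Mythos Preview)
Your proof is correct and follows essentially the same approach as the paper: both exploit the selfsimilar structure $h(x,t)=H(xt^{-1/(2s)})$ together with the scaling of $\partial_t$ and $\FL$ to reduce the evolution equation to the stationary profile equation. The paper's proof is a brief formal computation (differentiating $H(xt^{-1/(2s)})$ pointwise in $t$ and invoking the homogeneity of $\FL$), deferring the distributional justification to the change-of-variable manipulations already carried out in Lemma~\ref{lem:ssofequation}; your argument is the fully written-out distributional version of that same idea, testing against separable functions, differentiating $I(t)$ under the integral, and carefully upgrading $I'=-J$ in $\mathcal{D}'((0,T))$ to a pointwise identity via the decay of $\FL\varphi$ and dominated convergence.
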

\begin{remark}
Note that, by Lemma  \ref{lem:SelfSimilarStructure},  we can express the selfsimilar solution in terms of the profile as $h(x,t)=H(x t^{-1/(2s)})$.
\end{remark}
\begin{proof}[Proof of Lemma \ref{lem:profileH2}]
Let $\xi=xt^{-1/(2s)}$. Formally, we have
\[
\begin{split}
\partial_t h(x,t)=\partial_t\big(H(xt^{-\frac{1}{2s}})\big)=-\frac{1}{2s}t^{-\frac{1}{2s}-1}x\cdot \nabla H(xt^{-\frac{1}{2s}})=-\frac{1}{2s} t^{-1} \xi \cdot \nabla H(\xi),
\end{split}
\]
and $\FL [\Phi(h(\cdot,t)](x)= \FL [\Phi(H(\cdot \, t^{-1/(2s)} ))](x)=t^{-1} \FL \Phi(H)(\xi)$.
Similar computations can be done in distributional sense as in the proof of Lemma \ref{lem:ssofequation}.
\end{proof}

Note that the relation $u(x,t)=(h(x,t)-L)_+$ allows us to also define a selfsimilar profile for $u$. More precisely:
\[
u(x,t)=u(xt^{-\frac{1}{2s}},1)=:U(xt^{-\frac{1}{2s}}).
\]
By combining \eqref{P3} and  \eqref{eq:profH}, we obtain
\[
-\frac{1}{2s} \xi\cdot \nabla H(\xi)+ \FL U(\xi)=0 \qquad \textup{in} \qquad  \mathcal{D}'(\R^N).
\]

The following result shows that whenever $h$ and $u$ (resp. $H$ and $U$) are regular enough in a localized area, the above equations are satisfied in the classical sense.

\begin{lemma}\label{lem:clasSol}
Under the assumptions of Lemma \ref{lem:SelfSimilarStructure}, and, additionally, that $h\in C^1_b(\Lambda)$ and $u=\Phi(h)\in C^2_b(\Lambda)$ for some open set $\Lambda \subset \R^N \times(0,+\infty)$, we have that $\dell_th(x,t)$ and $\FL u(x,t)$ exist and
\[
\dell_th(x,t) + \FL u(x,t)=0 \qquad \textup{for all} \qquad (x,t)\in \Lambda.
\]
In the same way, if $H\in C_\textup{b}^1(\tilde{\Lambda})$ and $U\in C_\textup{b}^2(\tilde{\Lambda})$ for some open set $\tilde{\Lambda}\subset \R^N$,  then
\[
-\frac{1}{2s} \xi\cdot \nabla H(\xi)+ \FL U(\xi)=0  \qquad \textup{for all} \qquad \xi \in \tilde{\Lambda}.
\]
\end{lemma}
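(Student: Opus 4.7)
The strategy is to start from the distributional identities already available from the definition of very weak solution (for the space--time version) and from Lemma~\ref{lem:profileH2} (for the profile version), and to upgrade them to pointwise identities on the open set where the extra regularity is assumed. The key observation is that, under the hypotheses, both terms in the equation will be pointwise well-defined \emph{continuous} functions on $\Lambda$ (resp.\ $\tilde{\Lambda}$), so a classical fact about distributions closes the argument.

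\textbf{Step 1: both terms are classical continuous functions on $\Lambda$.} The hypothesis $h\in C_b^1(\Lambda)$ immediately gives that $\partial_t h$ exists in the classical sense and is continuous on $\Lambda$. For the nonlocal term, fix $(x_0,t_0)\in \Lambda$ and choose $r>0$ so that $\overline{B_r(x_0)}\times\{t_0\}\subset \Lambda$. Split the singular integral defining $\FL u(\cdot,t_0)(x)$, for $x\in B_{r/2}(x_0)$, into an integral over $B_r(x)$ and its complement. On $B_r(x)$, use the $C^2$ estimate
\[
|u(y,t_0)-u(x,t_0)-\nabla u(x,t_0)\cdot(y-x)|\leq \tfrac{1}{2}\|D^2u\|_{L^\infty(B_r(x)\times\{t_0\})}\,|y-x|^2,
\]
which is integrable against $|y-x|^{-N-2s}$ since $2s<2$; the principal-value symmetry cancels the linear term. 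On $\R^N\setminus B_r(x)$, use the global bound $\|u(\cdot,t_0)\|_{L^\infty(\R^N)}\leq \|h_0\|_{L^\infty(\R^N)}+L$ from Theorem~\ref{thm:bdddistsol}, against the integrable tail $|y-x|^{-N-2s}$. Hence $\FL u(x,t_0)$ is pointwise defined; continuity in $(x,t)$ on $\Lambda$ follows by dominated convergence (uniform $C^2$-control near any compactly contained subregion provides an integrable majorant).

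\textbf{Step 2: from distributions to pointwise.} The defining identity~\eqref{defeq:distSolFL}, moving the fractional Laplacian back onto $u=\Phi(h)$ by symmetry of $\FL$, yields $\partial_t h + \FL u = 0$ in $\mathcal{D}'(\R^N\times(0,T))$. Combined with Step~1, on $\Lambda$ the distribution $\partial_t h + \FL u$ is represented by a continuous function. The standard fact that a continuous function on an open set which vanishes as a distribution must vanish pointwise (test against a nonnegative mollifier concentrated at any given point and pass to the limit) then gives the pointwise identity on $\Lambda$.

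\textbf{Step 3: the profile case.} The argument is identical: Lemma~\ref{lem:profileH2} provides the distributional equation $-\tfrac{1}{2s}\xi\cdot\nabla H+\FL U=0$ in $\mathcal{D}'(\R^N)$. Under $H\in C_b^1(\tilde{\Lambda})$ the drift term is continuous on $\tilde{\Lambda}$, and under $U\in C_b^2(\tilde{\Lambda})$ together with the global bound $\|U\|_{L^\infty(\R^N)}\le \|h_0\|_{L^\infty(\R^N)}+L$ (again from Theorem~\ref{thm:bdddistsol}), the pointwise definition and continuity of $\FL U$ on $\tilde{\Lambda}$ follow as in Step~1. The distributions-to-pointwise argument of Step~2 then closes the proof.

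\textbf{Main difficulty.} The only non-trivial point is the pointwise well-definedness and continuity of $\FL u$ on $\Lambda$, because $\FL$ is nonlocal and the $C_b^2$ hypothesis is only local in space--time; this is why the global $L^\infty$ bound from Theorem~\ref{thm:bdddistsol} is essential to control the far-field tail of the singular integral. Everything else is bookkeeping.
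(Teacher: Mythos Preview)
Your proof is correct and follows essentially the same approach as the paper: both show that $\FL u$ is pointwise well-defined on $\Lambda$ via a near/far splitting of the singular integral (using the local $C^2$ bound for the singular part and the global $L^\infty$ bound from Theorem~\ref{thm:bdddistsol} for the tail), and then pass from the distributional identity to the pointwise one by testing against $\psi\in C_\textup{c}^\infty(\Lambda)$ and integrating by parts. You add slightly more detail on the continuity of $\FL u$ and on the distributions-to-pointwise step, but the structure is the same.
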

\begin{proof}
Let $(x,t)\in \Lambda$. Then there exists $r>0$ such that $B_r((x,t))\subset \subset \Lambda $. Thus
$h\in C^1_b(B_r(x,t))$ and $u=\Phi(h)\in C^2_b(B_r(x,t))$.
Clearly $\dell_th(x,t)$ exists. Moreover,
\[
\begin{split}
|\FL u(x,t)|&\leq\left| \int_{B_r(x)} \frac{u(x,t)-u(y,t)}{|x-y|^{N+2s}} \dd y\right|+\left| \int_{\R^N\setminus B_r(x)} \frac{u(x,t)-u(y,t)}{|x-y|^{N+2s}} \dd y\right|\\
& \leq C_1 \|D^2 u\|_{L^\infty(B_r(x,t))} r^{2-2s}+ C_2\|u\|_{L^\infty(Q_T)} r^{-2s}<+\infty,
\end{split}
\]
so $\FL u(x,t)$ exists too. Consider now a test function $\psi \in C_\textup{c}^\infty(\Lambda)$. Integrating by parts in Definition \ref{def:distSolfrac} gives
\[
\iint_{\Lambda}\big(\dell_th(x,t) \psi(x,t)-\FL u(x,t)  \psi(x,t) \big)\dd x \dd t=0 \quad \textup{for all} \quad \psi \in C_\textup{c}^\infty(\Lambda).
\]
As a consequence we get that $\dell_th(x,t)+\FL u(x,t)=0$ for all $(x,t)\in \Lambda$. The results on $H$ and $U$ follow in a similar way.
\end{proof}

\subsection{Profile, boundedness and limits of  selfsimilar solutions}
As mentioned before, here and in the following sections, we restrict ourselves to the 1-D case. First we start by showing Theorem \ref{thm:SS-all}\eqref{thm-SS-item1}.

\begin{lemma}
Under the assumptions of Theorem \ref{thm:SS-all}, we have that $H$ satisfies \eqref{P2SS} and  $H(x t^{-1/(2s)})=h(x,t)$ for a.e. $(x,t)\in \R\times(0,T)$.
\end{lemma}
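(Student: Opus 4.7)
The plan is to observe that the initial datum $h_0$ is \emph{zero-homogeneous} under positive dilations, and then quote the general selfsimilarity machinery already set up in Lemmas \ref{lem:ssofequation}--\ref{lem:profileH2}.

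First I would check the scaling hypothesis of Lemma \ref{lem:SelfSimilarStructure}. For every $a>0$ and every $x\in\R$ with $x\neq 0$, the sign of $ax$ coincides with the sign of $x$; since $h_0$ is determined solely by whether its argument is $\le 0$ or $>0$, we obtain $h_0(ax)=h_0(x)$ for all $a>0$ and a.e.\ $x\in\R$ (the single point $x=0$ is a null set). Hence the hypothesis of Lemma \ref{lem:SelfSimilarStructure} is satisfied, so the unique bounded very weak solution $h$ given by Theorem \ref{thm:bdddistsol} enjoys the identity
\[
h(x,t)=h\bigl(xt^{-1/(2s)},1\bigr) \qquad \text{for a.e. } (x,t)\in\R\times(0,T).
\]
Setting $H(\xi):=h(\xi,1)$ yields $h(x,t)=H(xt^{-1/(2s)})$ a.e., which is the second claim.

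Next I would obtain the equation \eqref{P2SS} directly from Lemma \ref{lem:profileH2} applied in dimension $N=1$. That lemma gives
\[
-\frac{1}{2s}\,\xi\cdot\nabla H(\xi)+\FL\Phi(H)(\xi)=0 \qquad \text{in }\mathcal{D}'(\R^N),
\]
which in one space dimension reads exactly
\[
-\frac{1}{2s}\,\xi H'(\xi)+\FL U(\xi)=0 \qquad \text{in }\mathcal{D}'(\R),
\]
since $\Phi(H)=(H-L)_+=U$ by definition. This is \eqref{P2SS}.

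Since the scale invariance of $h_0$ is immediate and the rest is a direct citation, there is essentially no obstacle here; the lemma is a routine specialization of the already-proved general results to the present Heaviside-type initial datum.
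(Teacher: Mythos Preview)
Your proof is correct and follows exactly the same approach as the paper's own proof: verify that $h_0(ax)=h_0(x)$ for all $a>0$ and a.e.\ $x\in\R$, then invoke Lemma~\ref{lem:SelfSimilarStructure} and Lemma~\ref{lem:profileH2}. The paper states this in two lines, while you spell out the details, but the content is identical.
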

\begin{proof}
Note that $h_0(ax)=h_0(x)$ for all $x\in \R$ and all $a>0$. Then the result follows from Lemma \ref{lem:SelfSimilarStructure} and Lemma \ref{lem:profileH2}.
\end{proof}

We prove now the first part of Theorem \ref{thm:SS-all}\eqref{thm-SS-item2}.

\begin{lemma}\label{lem:abovebelow}
Under the assumptions of Theorem \ref{thm:SS-all}, we have that $L-P_2\leq H(\xi)\leq L+P_1$ for a.e. $\xi\in\R$.
\end{lemma}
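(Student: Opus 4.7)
The plan is to obtain the two-sided bound on $H$ by applying the $L^\infty$ comparison principle from Theorem \ref{thm:bdddistsol} to the very weak solution $h$, and then transferring the bound to the profile through the selfsimilar identity $h(x,t)=H(xt^{-1/(2s)})$ already established in the previous lemma.

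First, I would observe that any constant function $c\in\R$ is a stationary very weak solution of \eqref{P1}--\eqref{P1-Init}. Indeed, $\partial_t c=0$ and $\Phi(c)=(c-L)_+$ is also constant, so $\FL\Phi(c)=0$ in the sense of distributions; hence the two constants $h^+\equiv L+P_1$ and $h^-\equiv L-P_2$ are very weak solutions with the corresponding constant initial data. By construction of the initial datum in Theorem \ref{thm:SS-all},
\[
L-P_2 \leq h_0(x) \leq L+P_1 \qquad \text{for a.e. } x\in\R.
\]
Applying the comparison principle (Theorem \ref{thm:bdddistsol}\eqref{thmfraclinf2-item2}) twice, once to the pair $(h^-,h)$ and once to the pair $(h,h^+)$, yields
\[
L-P_2 \leq h(x,t) \leq L+P_1 \qquad \text{for a.e. } (x,t)\in Q_T.
\]

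Next I would translate this bound to the profile: since the previous lemma gives $h(x,t)=H(xt^{-1/(2s)})$ for a.e.\ $(x,t)\in\R\times(0,T)$, fixing any $t\in(0,T)$ for which the identity and the bound hold and writing $\xi=xt^{-1/(2s)}$ produces $L-P_2\leq H(\xi)\leq L+P_1$ for a.e.\ $\xi\in\R$, which is the claim. There is no substantial obstacle here; the only small point to be careful about is that the comparison principle is stated for a.e.\ inequality of initial data and yields an a.e.\ inequality in $Q_T$, which is exactly what the selfsimilar change of variables transfers to an a.e.\ statement on $\R$.
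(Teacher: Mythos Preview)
Your proposal is correct and follows essentially the same approach as the paper: both observe that the constants $L+P_1$ and $L-P_2$ are stationary very weak solutions, apply the comparison principle (Theorem \ref{thm:bdddistsol}\eqref{thmfraclinf2-item2}) to sandwich $h$, and then read off the bound on $H$ via the selfsimilar identity. Your write-up is slightly more explicit about why constants are stationary solutions and about the a.e.\ transfer from $(x,t)$ to $\xi$, but the argument is the same.
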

\begin{proof}
Note that $L+P_1$ and $L-P_2$ are stationary solutions of \eqref{P1}--\eqref{P1-Init}. Clearly $L-P_2\leq h_0 \leq L+P_1$. Thus, by comparison (Theorem \ref{thm:bdddistsol}\eqref{thmfraclinf2-item2}) we have that $L-P_2\leq h\leq L+P_1$ which by definition implies $L-P_2\leq H(\xi)\leq L+P_1$ for a.e. $\xi\in \R$.
\end{proof}

Before proving that the limits of $H$ as $\xi\to\pm \infty$ are taken as stated in the second part of Theorem \ref{thm:SS-all}\eqref{thm-SS-item2}, we need to prove preliminary monotonicity results.

\begin{lemma}\label{lem:monoton}
Under the assumptions of Theorem \ref{thm:SS-all}, we have that $H$ is nonincreasing.
\end{lemma}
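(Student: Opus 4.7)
The plan is to exploit two ingredients already at our disposal: the translation invariance of the equation \eqref{P1} in the spatial variable, and the $L^\infty$ comparison principle for very weak solutions stated in Theorem \ref{thm:bdddistsol}\eqref{thmfraclinf2-item2}. The selfsimilar structure, guaranteed by the preceding lemma, will then transfer monotonicity of $h(\cdot,t)$ in $x$ into monotonicity of the profile $H$ in $\xi$.

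First, I would observe that the initial datum $h_0$ (a downward Heaviside-type step) is nonincreasing in $x$: for every fixed $y>0$ and a.e. $x\in\R$ one has $h_0(x+y)\leq h_0(x)$. Since \eqref{P1}--\eqref{P1-Init} is invariant under translations $x\mapsto x+y$, and since uniqueness of bounded very weak solutions (Theorem \ref{thm:bdddistsol}) applies, the shifted function $(x,t)\mapsto h(x+y,t)$ is the unique very weak solution associated with the shifted initial datum $h_0(\cdot+y)$. Applying the comparison item of Theorem \ref{thm:bdddistsol} to the pair $h_0(\cdot+y)\leq h_0$ then yields
\begin{equation*}
h(x+y,t)\leq h(x,t) \qquad\text{for a.e.}\ (x,t)\in Q_T \ \text{and every}\ y>0.
\end{equation*}
Hence $x\mapsto h(x,t)$ is nonincreasing for a.e.\ $t\in(0,T)$.

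Finally, by Lemma \ref{lem:SelfSimilarStructure} we have $h(x,t)=H(xt^{-1/(2s)})$ for a.e.\ $(x,t)\in Q_T$, so the a.e.\ monotonicity of $h(\cdot,t)$ in $x$ immediately translates into the a.e.\ monotonicity of $H$ in $\xi$. Choosing the (unique up to a null set) nonincreasing representative of $H$ gives the stated result. There is no real obstacle here: the only point one has to be careful with is that monotonicity at this stage is an a.e.\ statement, since the continuity of $H$ everywhere (and in particular at $\xi_0$) is not yet available; the strict decrease on $[\xi_0,+\infty)$ claimed in Theorem \ref{thm:SS-all}\eqref{thm-SS-item5} will be obtained later using the smoothness of $H$ in the hot region together with \eqref{P2SS}.
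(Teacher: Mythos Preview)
Your proof is correct and follows essentially the same approach as the paper: use translation invariance together with the comparison principle of Theorem \ref{thm:bdddistsol}\eqref{thmfraclinf2-item2} applied to the shifted datum $h_0(\cdot+y)\le h_0$, then read off monotonicity of $H$ from $H(\xi)=h(\xi,1)$. One small inaccuracy in your closing aside: the strict decrease on $[\xi_0,+\infty)$ is established later in the \emph{ice} region (where $U\equiv 0$), not via smoothness in the hot region; the paper argues by contradiction that a flat piece of $H$ there would force $U\equiv 0$ globally.
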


\begin{proof}
Consider the initial data $\hat{h}_0(x):=h_0(x+a)$ for some $a>0$ and the corresponding solution $\hat{h}$. Note that $h_0\geq \hat{h}_0$, and thus, by comparison (cf. Theorem \ref{thm:bdddistsol}\eqref{thmfraclinf2-item2}) $h\geq \hat{h}$.  Note also that \eqref{P1} is translationally invariant, and so $h(x+a,t)=\hat{h}(x,t)$ for a.e. $(x,t)\in \R\times(0,T)$. We have then that $h(x,t)\geq h(x+a,t)$ for a.e. $(x,t)\in \R\times(0,T)$, which concludes that $H$ is nonincreasing since $a>0$ was arbitrary and the relation $H(\xi)=h(\xi,1)$ holds.
\end{proof}

We are now ready to prove  the second part of Theorem \ref{thm:SS-all}\eqref{thm-SS-item2}.

\begin{lemma}\label{lem:lim}
Under the assumptions of Theorem \ref{thm:SS-all}, we have that
\[
\displaystyle\lim_{\xi\to-\infty}H(\xi)=L+P_1 \qquad  \textup{and} \qquad \displaystyle\lim_{\xi\to+\infty}H(\xi)=L-P_2.
\]
\end{lemma}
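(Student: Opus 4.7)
Since $H$ is nonincreasing (Lemma \ref{lem:monoton}) and bounded (Lemma \ref{lem:abovebelow}), the one-sided limits
\[
\ell_- := \lim_{\xi\to-\infty}H(\xi), \qquad \ell_+ := \lim_{\xi\to+\infty}H(\xi)
\]
exist and satisfy $L-P_2\le \ell_+ \le \ell_- \le L+P_1$. The whole task is to identify $\ell_-=L+P_1$ and $\ell_+=L-P_2$, and the idea is to read off these values from the initial condition via the selfsimilar formula $h(x,t)=H(xt^{-1/(2s)})$.

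First I would fix an arbitrary test function $\varphi\in C_\textup{c}^\infty(\R)$ with $\supp\varphi\subset(-\infty,0)$, and view it as a $t$-independent element of $C_\textup{c}^\infty(\R\times[0,T))$. The distributional initial condition from the Remark after Definition \ref{def:distSolfrac} then gives
\[
\esslim_{t\to 0^+}\int_{\R} h(x,t)\varphi(x)\dd x=\int_{\R}h_0(x)\varphi(x)\dd x=(L+P_1)\int_{\R}\varphi(x)\dd x.
\]
On the other hand, for every $x<0$ one has $xt^{-1/(2s)}\to-\infty$ as $t\to 0^+$, so $h(x,t)=H(xt^{-1/(2s)})\to \ell_-$ pointwise on $\supp\varphi$; combined with the uniform bound $|h|\le\max\{L+P_1,|L-P_2|\}$ from Lemma \ref{lem:abovebelow}, dominated convergence yields the genuine (not merely essential) limit
\[
\lim_{t\to 0^+}\int_{\R}h(x,t)\varphi(x)\dd x=\ell_-\int_{\R}\varphi(x)\dd x.
\]
Matching the two identities and choosing $\varphi\ge 0$ with $\int\varphi>0$ forces $\ell_-=L+P_1$.

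The symmetric argument with a test function $\varphi\in C_\textup{c}^\infty(\R)$ supported in $(0,+\infty)$ proceeds identically: for $x>0$, $xt^{-1/(2s)}\to+\infty$ as $t\to 0^+$, so $h(x,t)\to\ell_+$ pointwise on $\supp\varphi$, while the initial condition gives limit $(L-P_2)\int\varphi$. Hence $\ell_+=L-P_2$.

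The only subtle point is that the initial datum is merely attained in the distributional sense of the Remark after Definition \ref{def:distSolfrac} (an essential limit), not pointwise or in $L^1_\textup{loc}$ — the latter would have required $\|h_0(\cdot+\xi)-h_0\|_{L^1(\R)}\to 0$, which fails for our Heaviside-type datum. This is precisely why one must produce the identification through dominated convergence against compactly supported test functions: the pointwise convergence $h(x,t)\to\ell_\pm$ along each ray $x\ne 0$ upgrades automatically to an actual (hence essential) limit of the pairing, and then the essential limit in the initial condition can be compared to it.
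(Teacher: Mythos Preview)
Your proof is correct and follows essentially the same approach as the paper: existence of the one-sided limits from monotonicity and boundedness, then identification of the limit values by testing the distributional initial condition against smooth functions supported on one side of the origin and passing to the limit via dominated convergence. The only cosmetic difference is that you fix a $t$-independent test function while the paper allows $\psi(x,t)$, and you add the (correct and helpful) remark explaining why one must work through the essential-limit formulation rather than $L^1_{\textup{loc}}$ continuity in time.
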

\begin{proof}
By Lemma \ref{lem:abovebelow}, we know that $H$ is bounded from above and from below. By Lemma \ref{lem:monoton}, $H$ is monotone (nonincreasing as $\xi\to+\infty$ and nondecreasing as $\xi\to-\infty$), and this implies the existence of the limits:
\[
\underline{H}:=\lim_{\xi\to-\infty}H(\xi) \quad \textup{and} \quad \overline{H}:=\lim_{\xi\to+\infty}H(\xi).
\]
In particular,
\[
\esslim_{t\to0^+}H(x t^{-\frac{1}{2s}})=\begin{cases}
\underline{H} &\textup{if}\quad    x<0,\\
\overline{H}  &\textup{if}\quad   x>0.
\end{cases}
\]
Given any $\psi \in C_\textup{c}^\infty(\R \times [0,T))$, we have by Definition \ref{def:distSolfrac} that
\begin{equation*}
\esslim_{t\to0^+}\int_{\R}H(x t^{-\frac{1}{2s}})\psi(x,t)\dd x=\esslim_{t\to0^+}\int_{\R}h(x,t)\psi(x,t)\dd x=\int_{\R}h_0(x)\psi(x,0)\dd x.
\end{equation*}
Take now $\psi \in C_\textup{c}^\infty(\R_+\times[0,T))$. The Lebesgue dominated convergence theorem yields
\begin{equation*}
\esslim_{t\to0^+}\int_{\R}H(x t^{-\frac{1}{2s}})\psi(x,t)\dd x=
\int_{\R}\esslim_{t\to0^+} \big(H(x t^{-\frac{1}{2s}})\psi(x,t)\big)\dd x=
\int_{\R_+}\overline{H}\psi(x,0)\dd x,
\end{equation*}
and thus, we have the following identity for all $\phi\in C_\textup{c}^\infty(\R_+)$:
\[
\int_{\R_+}\overline{H}\phi(x)\dd x=\int_{\R}h_0(x)\phi(x)\dd x.
\]
This implies that $\overline{H}=h_0(x)$ for a.e. $x\in \R_+$, i.e. $\overline{H}=L-P_2$. The same argument taking $\psi \in C_\textup{c}^\infty(\R_-\times[0,T))$ shows that $\underline{H}=h_0(x)$ for a.e. $x\in \R_-$, i.e $\underline{H}=L+P_1$.
\end{proof}

\subsection{Strict monotonicity in the ice region and existence of a unique interface point}\label{sec:stricmon}

In this section we will prove that there exists one and only one interface point from water to ice regions (i.e. the first part of Theorem \ref{thm:SS-all}\eqref{thm-SS-item3}), and that in the ice region the solution $h$ is strictly decreasing (Theorem \ref{thm:SS-all}\eqref{thm-SS-item5}).

We start by showing that the temperature function $U$ is a continuous function.

\begin{lemma}\label{lem:Ucont}
Under the assumptions of Theorem \ref{thm:SS-all}, we have that $U\in C_\textup{b}(\R)$ and $0\leq U(\xi) \leq P_1$ for all $\xi \in \R$.
\end{lemma}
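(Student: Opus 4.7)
The plan is to read off both claims from results already established in the paper. The bounds come from the elementary relation $U=(H-L)_+$ together with Lemma \ref{lem:abovebelow}, while continuity is a direct consequence of the general temperature regularity in Theorem \ref{thmfrac-cont}.

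For the pointwise bounds, the lower bound $U\ge 0$ is immediate from the definition $U=(H-L)_+=\max\{H-L,0\}$. For the upper bound, Lemma \ref{lem:abovebelow} gives $H(\xi)\le L+P_1$ for a.e.\ $\xi\in\R$, hence $U(\xi)=(H(\xi)-L)_+\le P_1$ a.e.\ in $\R$. Once continuity of $U$ is established, this a.e.\ inequality upgrades to a pointwise one on all of $\R$.

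For continuity, I would invoke Theorem \ref{thmfrac-cont}, which asserts that the temperature $u=\Phi(h)$ of any bounded very weak solution is continuous on $\R^N\times(0,T)$. Applied to the present selfsimilar solution (whose initial datum $h_0\in L^\infty(\R)$ by construction), this gives $u\in C(\R\times(0,T))$. Since the selfsimilar profile is $U(\xi)=u(\xi,1)$, restricting $u$ to the time slice $t=1$ yields $U\in C(\R)$. Combined with the bound $0\le U\le P_1$ just proved, this gives $U\in C_\textup{b}(\R)$, completing the proof.

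There is essentially no obstacle here; the only subtle point is simply that one must know continuity of $u$ (not merely of $h$) in order to pass from the a.e.\ bound on $U$ to a pointwise bound and to conclude $U\in C_\textup{b}(\R)$. The lemma therefore serves as a clean bookkeeping step that transfers the black-box regularity from Theorem \ref{thmfrac-cont} and the comparison bound from Lemma \ref{lem:abovebelow} onto the selfsimilar temperature profile, preparing the ground for the subsequent arguments on the interphase point.
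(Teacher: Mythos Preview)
Your proof is correct and follows essentially the same approach as the paper: continuity of $U$ is obtained by restricting the continuous temperature $u=\Phi(h)$ from Theorem \ref{thmfrac-cont} to the time slice $t=1$, and the bounds $0\le U\le P_1$ come directly from $U=(H-L)_+$ together with Lemma \ref{lem:abovebelow}.
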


\begin{proof}
 Theorem \ref{thmfrac-cont}  ensures that given a solution $h\in L^\infty(\R\times(0,T))$ of \eqref{P1}--\eqref{P1-Init}  with initial data $h_0\in L^\infty(\R)$  we have that
\[
(x,t)\mapsto (h(x,t)-L)_+=\big(H(x t^{-\frac{1}{2s}})-L\big)_+= U(x t^{-\frac{1}{2s}})
\]
is continuous  for $t>0$. In particular, taking $t=1$ we get that $U$ is continuous in $\R$. By Lemma \ref{lem:abovebelow} we also have that
\[
0\leq U(\xi) = (H(\xi)-L)_+\leq (L+P_1-L)_+=P_1. \qedhere
\]
\end{proof}

For now, we know that $H$ is nonincreasing in $\R$ (Lemma \ref{lem:monoton}). However, we will be able to prove that $H$ is strictly decreasing in the ice region (where the temperature is zero).

\begin{lemma}\label{lem:setD}
Under the assumptions of Theorem \ref{thm:SS-all}, we have  that $H$ is strictly decreasing in the nonempty closed set
\[
D:=\{\xi\in \R : H(\xi)\leq L \}= \{\xi\in \R : U(\xi)= 0 \}.
\]
\end{lemma}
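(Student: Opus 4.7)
The equality $\{H \le L\} = \{U = 0\}$ follows directly from $U = (H-L)_+$. Closedness of $D$ comes from the continuity of $U$ (Lemma~\ref{lem:Ucont}), and nonemptiness from $H(\xi) \to L - P_2 < L$ as $\xi \to +\infty$ (Lemma~\ref{lem:lim}).

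For the strict monotonicity claim I argue by contradiction. Suppose $H(a) = H(b) =: c$ for some $a < b$ in $D$; since $H$ is nonincreasing (Lemma~\ref{lem:monoton}), this forces $H \equiv c$ and $U \equiv 0$ on the whole interval $[a, b]$. The plan is to extract two contradictory pieces of information about $\FL U$ restricted to $(a, b)$. On one hand, since $U$ vanishes in an open neighborhood of every $\xi \in (a, b)$, the principal value becomes unnecessary and
\[
\FL U(\xi) = -c_{1,s}\int_{\R} \frac{U(y)}{|\xi - y|^{1+2s}} \dd y,
\]
an absolutely convergent integral that depends continuously on $\xi \in (a, b)$ by dominated convergence (use $U \le P_1$ and that $|\xi - y|$ stays bounded away from $0$ on compact subsets of $(a, b)$). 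Because $U \ge 0$ is continuous and $U(y) \to P_1 > 0$ as $y \to -\infty$ (Lemma~\ref{lem:lim}), the integrand is strictly positive on a set of positive measure, whence $\FL U(\xi) < 0$ throughout $(a, b)$.

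On the other hand, I would test the profile equation \eqref{P2SS} against an arbitrary $\varphi \in C_\textup{c}^\infty((a, b))$ and move the derivative off $H$ in the drift term, getting
\[
\frac{1}{2s}\int_{\R} H(\xi)(\xi\varphi(\xi))' \dd \xi + \int_{\R} U(\xi) \FL \varphi(\xi) \dd \xi = 0.
\]
Since $H \equiv c$ on $[a, b]$ and $\varphi$ is compactly supported in $(a, b)$, the first integral collapses to $c\,[\xi\varphi(\xi)]_a^b = 0$. By the self-adjointness of $\FL$ on $L^\infty \times C_\textup{c}^\infty$ (equivalently, Fubini on the symmetric form of the PV definition, legitimate because $U$ is bounded), the second integral equals $\int_a^b \FL U(\xi)\,\varphi(\xi) \dd \xi$. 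Hence $\int_a^b \FL U\,\varphi \dd \xi = 0$ for every such $\varphi$, and continuity of $\FL U$ on $(a, b)$ then forces $\FL U \equiv 0$ there, contradicting the strict inequality above.

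The only genuinely delicate step in this plan is the passage from $\int_{\R} U \FL \varphi \dd\xi$ to $\int_a^b \FL U\,\varphi \dd\xi$; it is standard for bounded $U$ and test functions in $C_\textup{c}^\infty$, but deserves verification because $\FL U$ exists a priori only as a distribution on $\R$ and coincides with the pointwise expression precisely where $U$ vanishes locally, which is exactly the region of interest here.
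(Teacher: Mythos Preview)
Your argument is correct and follows essentially the same route as the paper: assume $H$ is constant on an interval inside $D$, compute $(-\Delta)^s U$ there via the explicit integral (legitimate because $U\equiv 0$ locally), and reach a contradiction between the profile equation forcing $(-\Delta)^s U=0$ and the explicit formula giving $(-\Delta)^s U<0$. The only cosmetic difference is that the paper packages your ``delicate step'' (passing from the weak formulation to the pointwise identity on $(a,b)$) into Lemma~\ref{lem:clasSol}, which applies trivially since $H$ and $U$ are constant on the interval; you instead redo that passage by hand via self-adjointness, which is fine and arguably more transparent in this specific situation.
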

\begin{proof}
The set $D$ is nonempty since $\lim_{\xi\to+\infty} H(\xi)=L-P_2<L$ by Lemma \ref{lem:lim}. It is also closed since $U$ is a continuous function by Lemma \ref{lem:Ucont}.

Assume by contradiction that $H$ is not strictly decreasing in $D$. Then there exists two points $\xi_m$ and $\xi_M$ with $\xi_m<\xi_M$ such that $H(\xi_m)=H(\xi_M)=l$ for some $l\leq L$. Since $H$ is nonincreasing, we have
\[
H(\xi)=l \qquad \textup{for all} \qquad \xi \in [\xi_m,\xi_M]
\]
and also that $U(\xi)=0$ for all $\xi \in [\xi_m,+\infty)$. Since both $H$ and $U$ are regular in the set $(\xi_m, \xi_M)$ we can apply Lemma \ref{lem:clasSol} and get that
\[
-\frac{1}{2s} \xi H'(\xi)+ \FL U(\xi)=0 \qquad \textup{for all} \qquad \xi \in (\xi_m, \xi_M).
\]
Note that, additionally, $H'(\xi)=0$ for all $\xi \in (\xi_m, \xi_M)$. Now, we fix $\hat{\xi} \in (\xi_m,\xi_M)$ and use the previous identity to get:
\[
0=\FL U(\hat{\xi})= c_{1,s}\textup{P.V.}\int_{|\eta|>0} \frac{U(\hat{\xi})-U(\eta)}{|\hat{\xi}-\eta|^{1+2s}}\dd \eta=-c_{1,s}\int_{-\infty}^{\xi_m}\frac{U(\eta)}{(\hat{\xi}-\eta)^{1+2s}}\dd \eta.
\]
Since $U$ is nonnegative and continuous the above identity implies that $U(\xi)=0$ for all $\xi \in (-\infty,\xi_m)$ and thus $U\equiv0$ in $\R$. In particular, we have that $H(\xi)\leq L$ for all $\xi \in \R$, which is a contradiction with the fact that $\lim_{\xi\to-\infty} H(\xi)=L+P_1>L$.
\end{proof}

We are now ready to prove the existence of a unique interface point, and that this point lies in the half line $[0,+\infty)$. Moreover we will also get the strict monotonicity stated in Theorem \ref{thm:SS-all}\eqref{thm-SS-item5} concluding the proof of that part of the theorem.

We will still be missing the fact that $\xi_0>0$, which will ensure that the free boundary of the selfsimilar solution advance as the time passes. This will require a more delicate study.

\begin{figure}[h!]\center
\includegraphics[width=\textwidth]{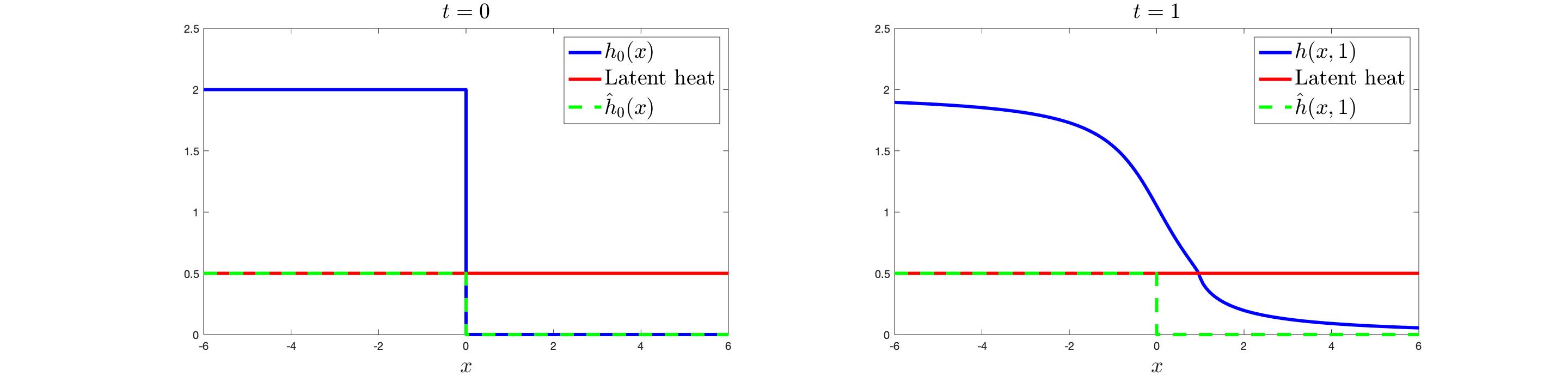}
\caption{Graphical proof of the nonnegativity of the interface point.}
\label{fig:Comparisoninterface}
\end{figure}

\begin{lemma}\label{eq:uniqueinterface}
Under the assumptions of Theorem \ref{thm:SS-all}, we have that there exists a unique $\xi_0\in \R$ such that $H(\xi_0)=L$. Moreover, $0\leq \xi_0<+\infty$ and $H$ is strictly decreasing in $[\xi_0,+\infty)$.
\end{lemma}
\begin{proof}
First we will show that there exists a unique $\xi_0\in \R$ such that $H(\xi_0)=L$. By Lemma \ref{lem:lim} $\lim_{\xi\to+\infty}H(\xi)=L-P_2<L$ and thus, there exists at least one point $\xi_1<+\infty$ such that $H(\xi_1)<L$, which in particular implies that $U(\xi_1):=(H(\xi_1)-L)_+=0$. Note that $U(\xi)=(H(\xi)-L)_+$ is nonincreasing, continuous and $\lim_{\xi\to-\infty}U(\xi)=P_1>0$. Thus, there exists $\xi_0\in \R$ such that
\[
\xi_0=\inf\{\xi\in \R \, :\, U(\xi)=0\}<+\infty.
\]
Then, for all $\xi<\xi_0$ we have that $U(\xi)>0$ and so $H(\xi)>L$. This implies that $H$ is continuous in $(-\infty,\xi_0]$ since $H(\xi)=U(\xi)+L$ there. We conclude then that $H(\xi_0)=L$.

With the above argument, we have also found that the set $D$ from Lemma \ref{lem:setD} is given by
\[
D=[\xi_0,+\infty).
\]
By strict monotonicity of $H$ in the set $D$ we have also that $H(\xi)=L$ only when $\xi=\xi_0$. This shows the existence of a unique interface point $\xi_0$.

Now we will prove that $\xi_0\geq0$. Consider
\begin{equation*}
\hat{h}_0(x):=\begin{cases}
L  \qquad\qquad&\textup{if}\qquad   x\leq0\\
0  \qquad\qquad&\textup{if}\qquad   x>0,
\end{cases}
\quad \textup{and} \quad
\hat{h}(x,t):=\begin{cases}
L  \qquad\qquad&\textup{if}\qquad   x\leq0\\
0  \qquad\qquad&\textup{if}\qquad   x>0,
\end{cases}
\end{equation*}
Note that $\hat{u}(x,t):=(\hat{h}(x,t)-L)_+=0$ for all $(x,t)\in \R\times(0,T)$ and thus $\hat{h}$ is a solution of \eqref{P1}--\eqref{P1-Init} with initial data $\hat{h}_0$. Since $\hat{h}_0\leq h_0$ in $\R$, we have by comparison (Theorem \ref{thm:bdddistsol}\eqref{thmfraclinf2-item2}) that $\hat{h}\leq h$ a.e. in $Q_T$ (see Figure \ref{fig:Comparisoninterface}). Then
\[
H(\xi)=h(\xi,1)\geq \hat{h}(\xi,1)=\hat{h}_0(\xi) \quad\textup{for a.e.}\quad \xi\in\R.
\]
Since $\hat{h}_0(0)=L$, the continuity of $(H(\xi)-L)_+$ gives $H(0)\geq L$.
On one hand, if $H(0)>L$, we have by monotonicity and continuity of $H$
that $\xi_0>0$. On the other hand, if $H(0)=L$, we have by the uniqueness of the interface point proved before that $\xi_0=0$. Thus, $\xi_0\geq0$.
\end{proof}

\subsection{Regularity in the ice and water regions}
We already know that $U$ is a continuous function, which together with the results in Section \ref{sec:stricmon} implies that $H$ is continuous in the water region, i.e. $H\in C_\textup{b}((-\infty,\xi_0])$. The regularity result in the water region can be improved as follows.

\begin{lemma}\label{lem:regwater}
Under the assumptions of Theorem \ref{thm:SS-all},  we have that $H\in C^{1,\alpha}((-\infty,\xi_0))$ for some $\alpha\in(0,1)$.
\end{lemma}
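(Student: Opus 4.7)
The plan is to exploit that in the spacetime water region $W := \{(x,t) \in \R \times (0,T) : x < \xi_0 t^{1/(2s)}\}$ the nonlinearity disappears and \eqref{P3} reduces to a linear nonlocal heat equation. By the definition of $\xi_0$ and the monotonicity of $H$ (Lemma \ref{eq:uniqueinterphase}) together with the continuity of $U$ (Lemma \ref{lem:Ucont}), $u>0$ on $W$; hence $h=u+L$ and $\partial_th=\partial_tu$ there, so the equation $\partial_th+\FL u=0$ collapses to
\[
\partial_tu+\FL u=0 \qquad \textup{in } W.
\]
The regularity of $H$ on $(-\infty,\xi_0)$ thus reduces to an interior regularity question for bounded solutions of this linear equation.

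To make this precise, I would fix $\xi_\ast\in(-\infty,\xi_0)$, note $(\xi_\ast,1)\in W$, and pick a space-time cylinder $Q\subset\subset W$ around $(\xi_\ast,1)$. With a smooth cutoff $\chi$ equal to $1$ on $Q$ and compactly supported in $W$, I would split $u=u\chi+u(1-\chi)$. The nonlocal tail $\FL[u(1-\chi)]$ is smooth on the interior of $\{\chi=1\}$, because the integrand defining it vanishes in a neighborhood of each such point while $u$ is globally bounded. Consequently $w:=u\chi$ satisfies $\partial_tw+\FL w=\tilde f$ in $Q$ with a smooth right-hand side $\tilde f$. Starting from the continuity of $u$ in $W$ given by Theorem \ref{thmfrac-cont} and iterating the standard Schauder-type estimates for the linear fractional heat equation, one obtains $u\in C^{1,\alpha}_{\textup{loc}}(W)$ for some $\alpha=\alpha(s)\in(0,1)$; in fact the same iteration yields $C^\infty_{\textup{loc}}(W)$, but only $C^{1,\alpha}$ is being claimed.

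Transferring the regularity back to the profile is then immediate: evaluating $u(x,t)=U(xt^{-1/(2s)})$ at $t=1$ gives $U(\xi)=u(\xi,1)$, and $(\xi,1)\in W\Leftrightarrow \xi<\xi_0$, so $U\in C^{1,\alpha}_{\textup{loc}}((-\infty,\xi_0))$. Since $H=U+L$ on this interval, the conclusion $H\in C^{1,\alpha}((-\infty,\xi_0))$ follows.

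The main technical obstacle will be the interior Schauder step, because $\FL$ is nonlocal while the equation $\partial_tu+\FL u=0$ is only known to hold on $W$, so the standard interior theory for fractional heat equations posed on all of $\R\times(0,T)$ cannot be invoked off the shelf. The cutoff decomposition described above is precisely the standard device that absorbs the nonlocal tail into a smooth forcing and reduces the matter to classical interior regularity for $\partial_tw+\FL w=\tilde f$. A direct alternative would be to work with the stationary profile equation $\FL U(\xi)=\frac{1}{2s}\xi U'(\xi)$ via Lemma \ref{lem:clasSol} and bootstrap using elliptic fractional Schauder, but the space-time route is cleaner: it avoids the degeneration of the drift coefficient $\xi$ at $\xi=0$ (which lies inside the water region whenever $\xi_0>0$) and exploits the parabolic smoothing uniformly in $\xi$.
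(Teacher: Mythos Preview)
Your overall strategy coincides with the paper's: identify the water region $W=\{x<\xi_0 t^{1/(2s)}\}$, observe that there $h=u+L$ so $u$ solves the linear fractional heat equation $\partial_tu+\FL u=0$, deduce interior $C^{1,\alpha}$ regularity for $u$, and read off the regularity of $H=U+L$ at $t=1$. The difference lies only in how the interior regularity is obtained.

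The paper does not perform a cutoff decomposition. It invokes directly the interior regularity theory for nonlocal parabolic equations: first Silvestre's H\"older estimate \cite{Silv2012} gives $u\in C^{\alpha}$ in a parabolic subcylinder, and then the Schauder-type improvement of Chang-Lara--D\'avila \cite{ChDa14, ChangLD2014} upgrades this to $C^{1,\alpha}$ in space. These results are formulated precisely for your situation---an equation satisfied in a local cylinder by a function that is merely bounded on the full strip $\R\times(t_0-\tau,t_0)$---so your worry that ``the standard interior theory \dots\ cannot be invoked off the shelf'' is unfounded: the nonlocal tail is already absorbed in the hypotheses of those theorems. Your cutoff-and-bootstrap route is a legitimate alternative and is the device one would use if such black-box results were unavailable, but it is more laborious and you would need to be careful that the forcing $\tilde f=-\FL[u(1-\chi)]$, while smooth in $x$, inherits only the time regularity of $u$; this makes the unqualified $C^\infty$ claim delicate, though it does not affect the $C^{1,\alpha}$ conclusion actually needed.
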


\begin{proof}
Let us examine the regularity of $U$ in the water region $(-\infty,\xi_0)$, i.e., in terms of space-time variables, the regularity of $u$ in the region $P:=\{(x,t): \ t>0, \ -\infty<x<\xi_0t^{1/(2s)}\}$. We localize around a point $(x_0,t_0)\in P$ and consider a small parabolic cylinder $Q_1(x_0,t_0)=(x_0-r,x_0+r)\times (t_0-\tau, t_0)$ where  $u$ is a continuous and bounded solution of $\partial_tu+(-\Delta)^su=0$. Moreover, $u$ is also bounded in $\R\times (t_0-\tau, t_0)$. By Theorem 4.1 of \cite{Silv2012} we conclude that $u\in C_{x,t}^{\alpha, \alpha/(2s)}(Q_2)$ in a certain subcylinder $Q_2\subset Q_1$, with a fixed $\alpha>0$ and constants that do not depend on $(x_0,t_0)\in P$ as long as we are away from the free boundary $x=\xi_0 t^{1/(2s)}$. Thus, $U$ is in $ C^\alpha((-\infty,\xi_0))$ uniformly away from $\xi_0$. See also \cite{ChangLD2012}. The improvement of regularity is obtained in Theorem 6.2 in \cite{ChDa14} (see also Theorem 2.4 of \cite{ChangLD2014}) where
it is proved that (for more general parabolic integro-differential operators) $u$ is $C^{1,\alpha}$ in space for some universal $\alpha \in (0, 1)$ inside the same water region. This proves that
$U\in C^{1,\alpha}((-\infty,\xi_0))$ and, since $H=U+L$ in $(-\infty,\xi_0)$, we are done.  We also conclude that equation \eqref{P2SS} is satisfied in the classical sense in $(-\infty,\xi_0)$ since $H'\in C^{\alpha}((-\infty,\xi_0))$ and thus, $\FL U  \in C^{\alpha}((-\infty,\xi_0))$ from \eqref{P2SS}.
\end{proof}

Next, we  study regularity in the ice region $I=(\xi_0,+\infty)$. It turns out to be better.

\begin{lemma}\label{lem:estimHice}
Under the assumptions of Theorem \ref{thm:SS-all}, we have that $H\in C^{\infty}((\xi_0,+\infty))$ and $H\in C^{\infty}_\textup{b}([\hat{\xi},+\infty))$ for all $\hat{\xi}>\xi_0$. Moreover:
\begin{enumerate}[\rm (a)]
\item\label{LemHice-item1}$ \xi H'(\xi)= 2s \FL U (\xi)$ for all $\xi>\xi_0$.
\item\label{LemHice-item2}  $H'(\xi)\asymp - 1/|\xi|^{1+2s}$ for all $\xi\gg \xi_0$.
\item\label{LemHice-item3} Given any $\xi_1, \xi_2$ such that $\xi_0<\xi_1<\xi_2$, we have that $H$ satisfies
 \begin{equation}\label{est:xipos}
H(\xi_2)-H(\xi_1)= 2s \int_{\xi_1}^{\xi_2} \frac{\FL U(\xi)}{\xi}\dd \xi.
\end{equation}
\end{enumerate}
\end{lemma}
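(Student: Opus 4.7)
The plan is as follows. Since $U\equiv 0$ on $[\xi_0,+\infty)$ by Lemma \ref{lem:setD}, for any $\xi>\xi_0$ the principal value defining $\FL U(\xi)$ reduces to the ordinary (non-singular) integral
\[
\FL U(\xi) = -c_{1,s}\int_{-\infty}^{\xi_0}\frac{U(\eta)}{(\xi-\eta)^{1+2s}}\,d\eta.
\]
For $\xi$ in any $[\hat\xi,+\infty)$ with $\hat\xi>\xi_0$ the distance $\xi-\eta$ is bounded below by $\hat\xi-\xi_0>0$, so differentiation under the integral sign is legitimate to all orders and produces, for every $k\ge 0$, a convergent integral of the form $\int_{-\infty}^{\xi_0}U(\eta)(\xi-\eta)^{-(k+1+2s)}d\eta$. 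Hence $\FL U\in C^\infty((\xi_0,+\infty))$, and each derivative is bounded on $[\hat\xi,+\infty)$ by $C_k\|U\|_\infty/(\hat\xi-\xi_0)^{k+2s}$.

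Next, I transfer this regularity to $H$. From equation \eqref{P2SS} restricted to the open set $(\xi_0,+\infty)$ we have $\xi H'(\xi)=2s\,\FL U(\xi)$ in $\mathcal{D}'((\xi_0,+\infty))$. Since $\xi_0\ge 0$ by Lemma \ref{eq:uniqueinterphase}, the function $1/\xi$ is smooth on $(\xi_0,+\infty)$, and multiplying the distributional identity by $1/\xi$ yields
\[
H'(\xi)=\frac{2s}{\xi}\,\FL U(\xi)\qquad\text{in }\mathcal{D}'((\xi_0,+\infty)).
\]
The right-hand side is a smooth function, so $H'$ and therefore $H$ are smooth on $(\xi_0,+\infty)$. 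Applying Lemma \ref{lem:clasSol} on any open $\tilde\Lambda\subset\subset(\xi_0,+\infty)$, where now $H\in C^1_\textup{b}(\tilde\Lambda)$ and $U\equiv 0\in C^2_\textup{b}(\tilde\Lambda)$, gives \eqref{P2SS} pointwise and hence \eqref{LemHice-item1}. For the uniform claim $H\in C^\infty_\textup{b}([\hat\xi,+\infty))$, $H$ is bounded by Lemma \ref{lem:abovebelow}, $|H'(\xi)|\le(2s/\hat\xi)\|\FL U\|_{L^\infty([\hat\xi,\infty))}$ is bounded by the previous paragraph, and higher derivatives are controlled inductively by differentiating $\xi H'=2s\,\FL U$ and using that $\xi\ge\hat\xi>0$.

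For \eqref{LemHice-item2}, I need matching upper and lower bounds on $\FL U(\xi)$ as $\xi\to+\infty$. The upper bound is immediate:
\[
|\FL U(\xi)|\le c_{1,s}\|U\|_\infty\int_{-\infty}^{\xi_0}\frac{d\eta}{(\xi-\eta)^{1+2s}}=\frac{c_{1,s}\|U\|_\infty}{2s(\xi-\xi_0)^{2s}}\lesssim\frac{1}{\xi^{2s}}.
\]
For the lower bound I use that $U(\eta)\to P_1>0$ as $\eta\to-\infty$ by Lemma \ref{lem:lim}, hence $U(\eta)\ge P_1/2$ for $\eta\le -M$ with some $M>0$, giving
\[
|\FL U(\xi)|\ge \frac{c_{1,s}P_1}{2}\int_{-\infty}^{-M}\frac{d\eta}{(\xi-\eta)^{1+2s}}=\frac{c_{1,s}P_1}{4s(\xi+M)^{2s}}\gtrsim\frac{1}{\xi^{2s}}.
\]
Since $\FL U(\xi)<0$ (the defining integral is strictly positive as $U\ge 0$ is not identically zero on $(-\infty,\xi_0)$) and $\xi>0$, \eqref{LemHice-item1} gives $H'(\xi)<0$ and $H'(\xi)\asymp -1/\xi^{1+2s}$ for $\xi\gg\xi_0$. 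Finally, \eqref{LemHice-item3} follows by integrating the identity $H'(\xi)=(2s/\xi)\FL U(\xi)$ from $\xi_1$ to $\xi_2$, legitimate by the fundamental theorem of calculus since $H\in C^1((\xi_0,+\infty))$.

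The main obstacle I foresee is the bootstrap from the distributional equation to smoothness of $H$: one must be careful to localize to $(\xi_0,+\infty)$ (where $U\equiv 0$ simplifies $\FL U$ drastically) and to exploit $\xi_0\ge 0$ to divide by $\xi$. Everything else reduces to elementary estimates on the explicit integral representation of $\FL U$.
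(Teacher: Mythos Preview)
Your proof is correct and follows essentially the same route as the paper: you reduce $\FL U$ on $(\xi_0,+\infty)$ to the non-singular integral $-c_{1,s}\int_{-\infty}^{\xi_0}U(\eta)(\xi-\eta)^{-1-2s}\,d\eta$, differentiate under the integral to get $C^\infty$ regularity with bounded derivatives on each $[\hat\xi,+\infty)$, and then bootstrap via the distributional identity $\xi H'=2s\,\FL U$ (dividing by $\xi$, which is legitimate since $\xi_0\ge 0$). The paper packages the first step and the two-sided asymptotic $\FL U(\xi)\asymp -|\xi-\xi_0|^{-2s}$ into the auxiliary Lemma \ref{lem:Uice}, while you carry out these estimates explicitly; in particular, your lower bound using $U(\eta)\ge P_1/2$ for $\eta\le -M$ (from $U\to P_1$ at $-\infty$) is exactly the content of part (d) of that lemma applied to $U$.
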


\begin{proof}
By Lemma \ref{lem:Uice} we have that $\FL U \in C^{\infty}((\xi_0,+\infty))$. The distributional identity \eqref{P2SS} together with the mentioned regularity imply that $\xi H'(\xi)$ is also a function and
\[
\xi H'(\xi)=2s \FL U (\xi)\qquad\textup{for all}\qquad \xi>\xi_0.
\]
Moreover, since both $1/\xi$ and $\FL U(\xi)$ are infinitely smooth in $(\xi_0,\infty)$ and also with bounded derivatives in $[\hat{\xi},+\infty)$, we get that $H\in C^{\infty}((\xi_0,+\infty))$ and $H\in C^{\infty}_\textup{b}([\hat{\xi},+\infty))$. To prove part \eqref{LemHice-item2}, we use Lemma \ref{lem:Uice} to get that for $\xi\gg\xi_0$ we have
\[
H'(\xi) =2s\frac{\FL U(\xi)}{\xi}\asymp - \frac{1}{|\xi-\xi_0|^{2s}}\frac{1}{\xi} \asymp- \frac{1}{|\xi|^{1+2s}}
\]
This proves the second estimate in Theorem \ref{thm:SS-all}\eqref{thm-SS-item7}. Identity \eqref{est:xipos} follows from part \eqref{LemHice-item1}.
\end{proof}
The regularity at the interface $\xi_0$ is still missing here. It will be discussed in Section \ref{sec:continterface}.

\subsection{Strict positivity of the interface point}

It is crucial to prove the strict positivity of the interface region, i.e., that $\xi_0>0$. A consequence of this fact is that the free boundary of the selfsimilar solution is moving forward in time.

\begin{lemma}\label{lem:posinterp}
Under the assumptions of Theorem \ref{thm:SS-all}, we have that $\xi_0>0$.
\end{lemma}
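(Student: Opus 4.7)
The plan is to argue by contradiction: assume $\xi_0 = 0$, and then use the identity from Lemma \ref{lem:estimHice}\eqref{LemHice-item1}, namely $\xi H'(\xi) = 2s\,\FL U(\xi)$ for $\xi > \xi_0$, to force a logarithmic blow-up of $H$ as $\xi \to 0^+$, which contradicts the global bound $H \leq L + P_1$ from Lemma \ref{lem:abovebelow}.

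The first step is to get a \emph{uniform strict negativity} of $\FL U$ in a right neighborhood of $0$. Assuming $\xi_0 = 0$, we have $U \equiv 0$ on $[0,+\infty)$ and $U > 0$ on $(-\infty,0)$, so for $\xi > 0$ the singular integral collapses to
\[
\FL U(\xi) = -c_{1,s}\int_{-\infty}^0 \frac{U(\eta)}{(\xi-\eta)^{1+2s}}\dd\eta.
\]
Because $U(\eta) \to P_1 > 0$ as $\eta \to -\infty$ (Lemma \ref{lem:lim}) and $U$ is continuous, there exist $R > 1$ and $\delta > 0$ with $U(\eta) \geq \delta$ for all $\eta \leq -R$. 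For $\xi \in (0,1]$ and $\eta \leq -R$ one has $\xi - \eta \leq 2|\eta|$, so restricting the integral to $(-\infty,-R]$ yields a uniform bound
\[
\FL U(\xi) \leq -C_0 \qquad \text{for all } \xi \in (0,1],
\]
with an explicit positive constant $C_0 = C_0(s, \delta, R)$.

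The second step is to integrate the ODE-type identity. From $H'(\xi) = 2s\,\FL U(\xi)/\xi$ (valid classically on $(0,+\infty)$ by Lemma \ref{lem:estimHice}) and the previous bound, we obtain $H'(\xi) \leq -2sC_0/\xi$ for $\xi \in (0,1]$. Integrating from $\xi$ to $1$ gives
\[
H(\xi) \geq H(1) + 2sC_0\,\log(1/\xi) \xrightarrow[\xi \to 0^+]{} +\infty,
\]
which contradicts $H \leq L+P_1$. Equivalently, one may apply formula \eqref{est:xipos} of Lemma \ref{lem:estimHice}\eqref{LemHice-item3} with $\xi_1 \to 0^+$ and $\xi_2 = 1$ to observe that the integral $\int_{\xi_1}^1 \FL U(\xi)/\xi\,\dd\xi$ diverges to $-\infty$, forcing $H(\xi_1) \to +\infty$ — the same contradiction.

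I expect no serious obstacle here: the main ingredients (smoothness of $H$ in the ice region, the ODE identity, continuity and strict positivity of $U$ on $(-\infty,0)$, and the asymptotic value $P_1$ at $-\infty$) are already in place. The only point requiring a little care is the uniform lower bound on $|\FL U(\xi)|$ near $\xi=0^+$, which is why one truncates the integral to $(-\infty,-R]$ where $U$ is bounded away from zero and the kernel is comparable to $|\eta|^{-(1+2s)}$; otherwise the mass of $U$ close to $0^-$ is not controlled at this stage (the sharp boundary behavior of Theorem \ref{thm:SS-all}\eqref{thm-SS-item6} has not yet been proved).
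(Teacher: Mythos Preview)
Your proof is correct and takes a genuinely different, more elementary route than the paper.

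The paper also argues by contradiction with $\xi_0 = 0$ and also uses the identity from Lemma \ref{lem:estimHice}, but it works much harder for a quantitative lower bound on $-\FL U$ near $0^+$. It first establishes $U(\xi)\gtrsim |\xi|^s$ for $\xi<0$ near the interface via a Hopf-lemma / Green-function argument on a bounded interval, and from this deduces $-\FL U(\xi)\gtrsim |\xi|^{-s}$ for small $\xi>0$, yielding a polynomial blow-up $\int_0^{\xi_2}\eta^{-1-s}\,\dd\eta=+\infty$ in \eqref{est:xipos}. You instead truncate the convolution integral to the tail $(-\infty,-R]$, where $U\geq\delta$ by Lemma \ref{lem:lim}, and obtain the weaker but entirely sufficient uniform bound $-\FL U(\xi)\geq C_0$ on $(0,1]$; the resulting logarithmic divergence $\int_0^1 \eta^{-1}\,\dd\eta=+\infty$ still contradicts the $L^\infty$-bound on $H$. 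Your argument bypasses the Hopf lemma, the boundedness/approximation of the right-hand side, and the Green-function representation, and relies only on ingredients already available at this point in the paper (Lemmas \ref{lem:abovebelow}, \ref{lem:lim}, \ref{lem:Ucont}, and \ref{lem:estimHice}). The paper's stronger estimate $-\FL U(\xi)\gtrsim|\xi|^{-s}$ is not reused elsewhere, so nothing is lost by your simplification.
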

\begin{proof}
We argue by contraction: Under the assumption $\xi_0=0$, the strategy of the proof is to show that the behaviour of $U$ close to the interface has a certain lower bound, and this lower bound will imply that the solution itself is not bounded in $[\xi_0,+\infty)=[0,+\infty)$. Thus, reaching a contradiction.

\smallskip
\noindent\textbf{1)} \emph{Lower bound of $U$}. We want to prove that if $\xi_0=0$, then $U(\xi) \gtrsim |\xi|^s$ for all $\xi<0$ close enough to $\xi_0=0$.

Fix $\hat{\xi}$, consider $I:=[\hat{\xi},0]$, and let $U^I$ solve (cf. \eqref{P2SS})
\begin{equation}\label{P2SSI}
\begin{cases}
\FL U^I (\xi)=\frac{1}{2s}\xi H'(\xi) \qquad\qquad&\text{in}\qquad\xi \in I,\\
U^I (\xi)=0  \qquad\qquad&\text{in}\qquad \xi\in I^{c}.
\end{cases}
\end{equation}
If we are able to get $U^I(\xi)\gtrsim |\xi|^{s}$ for all $\xi\in I$, then by linearity and the fact that $U\geq0$ in $\R$, $U-U^I$ solves
\begin{equation*}
\begin{cases}
\FL (U-U^I)(\xi)=0 \qquad\qquad&\text{in}\qquad\xi \in I,\\
(U-U^I)(\xi)\geq0  \qquad\qquad&\text{in}\qquad \xi\in I^{c}.
\end{cases}
\end{equation*}
By the strong maximum principle, it follows that $U(\xi)\geq U^I(\xi)$ for all $\xi\in I$, and we get
\begin{equation}\label{eq:upbound}
U(\xi)\geq U^I(\xi)\gtrsim |\xi|^{s}.
\end{equation}

Let us then continue by proving the lower bound for $U^I$.  Recall that $H'\in C((-\infty,0))$ (Lemma \ref{lem:regwater}), $H'\leq0$ (Lemma \ref{lem:monoton}), and $\|H'\|_{L^1((-\infty,0))}=P_1$. We can moreover assume that $\hat{\xi}$ is such that $H'\not\equiv0$. Since the right-hand side in \eqref{P2SSI} is $L^1$, we know that the unique solution $U^I$ is given by \cite{ChenVeron, GC-Va17}
$$
U^I(\xi)=\frac{1}{2s}\int_{\hat{\xi}}^{0} \mathbb{G}(\xi,y) y H'(y)\dd y\not\equiv0.
$$
The right-hand side is also nonnegative and then we conclude by the Hopf lemma \cite[Lemma 7.3]{R-Ot16} (see also \cite{R-OtSe14}) that $U^I(\xi)\gtrsim |\xi|^{s}$.

\smallskip
\noindent\textbf{2)} \emph{Contradiction on $L<+\infty$}.
Recall that for $\xi>\xi_0=0$ we have that $U(\xi)=0$. Thus, using \eqref{eq:upbound}, we have for all $\xi>0$ close enough to zero, that
\[
\begin{split}
-\FL U(\xi)&=c_{1,s} \int_{-\infty}^0 \frac{U(\eta)}{|\eta-\xi|^{1+2s}}\dd \eta \gtrsim  \int_{-2\xi}^{-\xi} \frac{|\eta|^{s}}{|\eta-\xi|^{1+2s}}\dd \eta\geq  \int_{-2\xi}^{-\xi} \frac{\dd \eta}{|\eta-\xi|^{1+s}}\sim \frac{1}{|\xi|^{s}}.
\end{split}
\]
We use now estimate \eqref{est:xipos}, the fact that $H\geq L-P_2$, and the above upper bound to get
\begin{equation*}
\begin{split}
H(\xi_1)&= H(\xi_2) - 2s \int_{\xi_1}^{\xi_2}  \frac{\FL U(\eta)}{\eta}\dd \eta
 \gtrsim L-P_2+ \int_{\xi_1}^{\xi_2} \frac{\dd \eta}{\eta^{1+s}}.
\end{split}
\end{equation*}
Recall that $H\leq L$ in $[\xi_0=0,+\infty)$. Taking limits as $\xi_1\to0^+$ in the above estimate we get
\[
L\geq \lim_{\xi_1\to0^+} H(\xi_1) \gtrsim L-P_2+ \int_0^{\xi_2} \frac{\dd \eta}{\eta^{1+s}}=+\infty.
\]
which is a contradiction and proves that $\xi_0>0$.
 \end{proof}

 With this result we have finished the proof of Theorem \ref{thm:SS-all}\eqref{thm-SS-item3}.

 \subsection{Continuity of the enthalpy and behaviour at the interface}\label{sec:continterface}
 To conclude that $H$ is continuous in $\R$, we only have to check that there exists no discontinuity at the interface, that is,
 \[
 \lim_{\xi\to\xi_0^-} H(\xi)=L= \lim_{\xi\to\xi_0^+} H(\xi).
 \]
Recall that the left-hand side of this identity was a consequence of the continuity of $U$ (cf. Lemma \ref{eq:uniqueinterface}). To prove the other one, we first need to show Theorem \ref{thm:SS-all}\eqref{thm-SS-item6}.

\begin{lemma}\label{lem:upperbound}
Under the assumptions of Theorem \ref{thm:SS-all}, we have that
\begin{equation}\label{eq:upperest2}
U(\xi)=H(\xi)-L=O\left((\xi_0-\xi)^{s}\right) \quad \textup{for all} \quad \xi\leq \xi_0.
\end{equation}
\end{lemma}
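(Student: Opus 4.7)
The plan is to establish the upper bound via a comparison argument on a small left neighborhood of the free boundary, using the explicit $s$-harmonic barrier $\phi(\xi) := (\xi_0 - \xi)_+^s$. Being a translation-reflection of the classical 1-D function $x_+^s$, this $\phi$ satisfies the pointwise identity $\FL \phi(\xi) = 0$ for every $\xi < \xi_0$; see e.g.\ \cite{R-OtSe14}.

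The key observation is that the right-hand side of \eqref{P2SS} has a favorable sign immediately to the left of the interphase. Since $\xi_0 > 0$ by Lemma \ref{lem:posinterp}, we may pick $\delta \in (0,\xi_0)$ so that $I_\delta := (\xi_0-\delta,\xi_0) \subset (0,+\infty)$. On $I_\delta$ the equation \eqref{P2SS} holds classically by Lemma \ref{lem:regwater}, and $H' \le 0$ by Lemma \ref{lem:monoton}, so
\[
\FL U(\xi) \;=\; \tfrac{1}{2s}\,\xi\, H'(\xi) \;\le\; 0 \qquad \text{for every } \xi \in I_\delta.
\]
Setting $W(\xi) := C\phi(\xi) - U(\xi)$ with $C>0$ to be fixed, this yields $\FL W \ge 0$ in $I_\delta$. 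To ensure $W \ge 0$ outside $I_\delta$, observe that both $\phi$ and $U$ vanish on $[\xi_0,+\infty)$, while on $(-\infty, \xi_0-\delta]$ we have $\phi \ge \delta^s$ and $U \le P_1$ by Lemma \ref{lem:Ucont}; hence $C := P_1/\delta^s$ suffices. The comparison principle for $\FL$ on the bounded interval $I_\delta$ then gives $W \ge 0$ on $I_\delta$, i.e.
\[
U(\xi) \;\le\; C\,(\xi_0-\xi)^s \qquad \text{for all } \xi \in I_\delta,
\]
and for $\xi \le \xi_0-\delta$ the same upper bound is immediate from $U(\xi) \le P_1 \le C(\xi_0-\xi)^s$.

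The main technical care is in applying the comparison principle with a barrier that is unbounded at $-\infty$: however, since $\phi(\xi)$ grows only like $|\xi|^s$ with $s<1$, the integrals defining $\FL \phi$ remain absolutely convergent and $\FL \phi \equiv 0$ pointwise on $(-\infty,\xi_0)$, so the standard weak maximum principle for $\FL$ on a bounded interval with nonnegative exterior data applies without complication. The positivity $\xi_0>0$ (ensuring $\xi>0$ throughout $I_\delta$) is the ingredient that makes the sign of $\FL U$ work in our favor; without it, the barrier $(\xi_0-\xi)_+^s$ would not suffice and a more refined construction would be needed.
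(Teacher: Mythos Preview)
Your argument is correct and takes a genuinely different route from the paper's. The paper works at the space--time level: it observes that $u$ solves the fractional heat equation in the expanding domain $(-\infty,\xi_0 t^{1/(2s)})\times(0,1]$, compares with the solution $v$ of the Dirichlet fractional heat equation on the fixed half-line $(-\infty,\xi_0)$, and then invokes heat-kernel estimates from \cite{ChKiSo10,BoGrRy10} to obtain the boundary behaviour $v(x,1)\lesssim|x-\xi_0|^s$, whence $U(\xi)=u(\xi,1)\le v(\xi,1)$. Your proof, by contrast, is purely elliptic and works directly on the profile equation \eqref{P2SS}: you exploit the sign $\FL U=\frac{1}{2s}\xi H'\le0$ on $(0,\xi_0)$---which is where the strict positivity $\xi_0>0$ of Lemma~\ref{lem:posinterp} enters crucially---together with the explicit $s$-harmonic barrier $(\xi_0-\xi)_+^s$ and a maximum-principle argument on a bounded interval. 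What your approach buys is a self-contained and more elementary proof that avoids the parabolic Dirichlet machinery entirely; the paper's approach, on the other hand, has the advantage that the comparison framework (subsolution in an expanding domain versus solution in a fixed domain) does not rely on the classical validity of the profile equation and would transfer more readily to non-selfsimilar settings.
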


\begin{proof}
The strategy is to find a suitable upper barrier for $U$ that  itself  satisfies estimate \eqref{eq:upperest2}. To do so, we will first note that $u$ satisfies a fractional heat equation with a time-dependent domain $(-\infty, \xi_0 t^{1/(2s)})$ (which is expanding in time). A supersolution for this problem will be given by the solution of the fractional heat equation in the domain $(-\infty, \xi_0)$.

Let $v$ be the solution of
\begin{equation*}
\begin{cases}
\dell_tv+\FL v=0 \qquad\qquad&\textup{in}\qquad   (-\infty,\xi_0) \times (0,1],\\
v=0\qquad\qquad&\textup{in}\qquad   [\xi_0,+\infty)\times[0,1],\\
v(\cdot,0)=u_0 \qquad\qquad  &\textup{in}\qquad   (-\infty,\xi_0).
\end{cases}
\end{equation*}
On one hand, we know from the results of \cite{ChKiSo10,BoGrRy10} (see also discussion in \cite{FeRo16}) that
\[
0\leq v(x,t)\lesssim |x-\xi_0|^{s}  \quad \textup{for all} \quad x\leq \xi_0
\]
On the other hand, we know that $U(x)=u(x,1)$ where $u$ satisfies
\begin{equation*}
\begin{cases}
\dell_tu+\FL u=0 \qquad\qquad&\textup{in}\qquad    (-\infty, \xi_0 t^{\frac{1}{2s}})\times(0,1],\\
u=0\qquad\qquad&\textup{in}\qquad   [\xi_0 t^{\frac{1}{2s}},+\infty)\times[0,1],\\
u(\cdot,0)=u_0 \qquad\qquad&\textup{in}\qquad   (-\infty,\xi_0).
\end{cases}
\end{equation*}
Note also that in the region $(x,t)\in[\xi_0t^{1/(2s)}, \xi_0]\times(0,1]$, we have that $u=0$ (and $u\geq0$ in $\R$) and thus $\dell_tu=0$ and $\FL u\leq0$ there, i.e,
\[
\partial_tu+\FL u\leq0 \quad\quad \textup{in} \qquad [\xi_0t^{\frac{1}{2s}}, \xi_0]\times(0,1].
\]
To finish, we consider $w=v-u$. It satisfies:
\begin{equation*}
\begin{cases}
\partial_tw+\FL w\geq0 \qquad\qquad&\textup{in}\qquad   (-\infty,\xi_0) \times (0,1],\\
w=0\qquad\qquad&\textup{in}\qquad   [\xi_0,+\infty)\times[0,1],\\
w(\cdot,0)=0 \qquad\qquad   &\textup{in}\qquad   (-\infty,\xi_0).
\end{cases}
\end{equation*}
Thus, $w\geq0$ (see \cite{FeRo16}) and so $
u(x,t)\leq v(x,t)\lesssim |x-\xi_0|^{s}$. In particular, $U(\xi)=u(\xi,1)\lesssim |\xi-\xi_0|^{s}$.
\end{proof}

We are now ready to finish the proof of Theorem \ref{thm:SS-all}\eqref{thm-SS-item4} by proving the continuity of $H$ at the interface.

\begin{lemma}
Under the assumptions of Theorem \ref{thm:SS-all}, we have that
\[
 \lim_{\xi\to\xi_0^-} H(\xi)=L= \lim_{\xi\to\xi_0^+} H(\xi).
 \]
\end{lemma}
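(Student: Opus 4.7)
The plan is to rule out any jump of $H$ at $\xi_0$ by exploiting the global distributional equation \eqref{P2SS}. The one-sided limit $H^+(\xi_0):=\lim_{\xi\to\xi_0^+}H(\xi)$ exists by monotonicity (Lemma \ref{lem:monoton}), and I set $\delta:=L-H^+(\xi_0)\ge 0$, so the task reduces to showing $\delta=0$. Since $H\in C^{1,\alpha}$ on $(-\infty,\xi_0)$ and $H\in C^\infty$ on $(\xi_0,+\infty)$ (Lemmas \ref{lem:regwater} and \ref{lem:estimHice}), while $H^-(\xi_0)=L$ is already known, the distributional derivative on $\R$ decomposes as $H'=H'_{\mathrm{cl}}-\delta\,\delta_{\xi_0}$, where $H'_{\mathrm{cl}}$ denotes the classical derivative on $\R\setminus\{\xi_0\}$.

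Pairing \eqref{P2SS} with rescaled bumps $\phi_n(\xi):=\phi(n(\xi-\xi_0))$, where $\phi\in C_c^\infty(\R)$ is fixed with $\phi(0)=1$ and $\supp\phi\subset[-1,1]$, and using $\xi\,\delta_{\xi_0}=\xi_0\,\delta_{\xi_0}$, yields
\begin{equation*}
-\frac{1}{2s}\int_{\R\setminus\{\xi_0\}}\xi H'_{\mathrm{cl}}(\xi)\phi_n(\xi)\,\mathrm{d}\xi+\frac{\xi_0\,\delta}{2s}+\int_\R U(\xi)\FL\phi_n(\xi)\,\mathrm{d}\xi=0.
\end{equation*}
I would then show that the first and third summands vanish as $n\to\infty$, which, since $\xi_0>0$ by Lemma \ref{lem:posinterp}, forces $\delta=0$ and, combined with $H^-(\xi_0)=L$, gives continuity of $H$ at $\xi_0$.

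The first summand is routine. On $(-\infty,\xi_0)$ the derivative $H'_{\mathrm{cl}}$ is continuous (Lemma \ref{lem:regwater}), while on $(\xi_0,+\infty)$ the relation $\xi H'(\xi)=2s\FL U(\xi)$ combined with $|\FL U(\xi)|\lesssim(\xi-\xi_0)^{-s}$---obtained by inserting the bound $U(y)\lesssim(\xi_0-y)^s_+$ of Lemma \ref{lem:upperbound} into the integral representation $\FL U(\xi)=-c_{1,s}\int_{-\infty}^{\xi_0}U(y)(\xi-y)^{-1-2s}\,\mathrm{d}y$ (valid since $U\equiv 0$ on $[\xi_0,+\infty)$)---gives $|H'_{\mathrm{cl}}(\xi)|\lesssim(\xi-\xi_0)^{-s}$, integrable near $\xi_0$. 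Hence the integral on the shrinking support of $\phi_n$ tends to zero.

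The main obstacle is the nonlocal term, since the scaling identity $\FL\phi_n(\xi)=n^{2s}\FL\phi(n(\xi-\xi_0))$ produces a blow-up factor $n^{2s}$ that a naive $\|U\|_{L^\infty}$ estimate cannot absorb. The trick is to use simultaneously that $U$ is supported in $(-\infty,\xi_0]$ and has precisely the Hölder-$s$ behaviour there. After the change of variable $\eta=n(\xi-\xi_0)$,
\begin{equation*}
\int_\R U(\xi)\FL\phi_n(\xi)\,\mathrm{d}\xi=n^{2s-1}\int_{-\infty}^0 U(\xi_0+\eta/n)\FL\phi(\eta)\,\mathrm{d}\eta,
\end{equation*}
and inserting $U(\xi_0+\eta/n)\lesssim n^{-s}|\eta|^s$ from Lemma \ref{lem:upperbound} together with the decay $|\FL\phi(\eta)|\lesssim(1+|\eta|)^{-1-2s}$ (so that $\int|\eta|^s|\FL\phi(\eta)|\,\mathrm{d}\eta<\infty$) produces a bound of order $n^{s-1}\to 0$. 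Note how the exponent $s$ of the Hölder estimate balances exactly the $n^{2s}$ scaling of $\FL$, which is what makes the argument close for every $s\in(0,1)$.
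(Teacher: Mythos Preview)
Your proof is correct and follows essentially the same route as the paper: test the profile equation \eqref{P2SS} against a bump concentrating at $\xi_0$, use the H\"older-$s$ bound $U(\xi)\lesssim(\xi_0-\xi)^s$ from Lemma \ref{lem:upperbound} together with the scaling $\FL\phi_n=n^{2s}\FL\phi(n(\cdot-\xi_0))$ to show the nonlocal term is $O(n^{s-1})$, and conclude $\xi_0\delta=0$, hence $\delta=0$ since $\xi_0>0$. The only cosmetic difference is that the paper integrates by parts and works with $H$ and $(\xi\eta_\veps)'$ directly (so that boundedness of $H$ suffices on the ``drift'' side), whereas you decompose the distributional derivative as $H'=H'_{\mathrm{cl}}-\delta\,\delta_{\xi_0}$ and then need the auxiliary estimate $|H'_{\mathrm{cl}}(\xi)|\lesssim(\xi-\xi_0)^{-s}$ on the right of $\xi_0$; both are correct and lead to the same identity $\xi_0\delta=0$.
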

\begin{proof}
By the continuity of $U$  and Lemma \ref{eq:uniqueinterface}, we know that
\[
H(\xi_0^-):=\lim_{\xi\to \xi_0^-} H(\xi)=L.
\]
We need to show that $H(\xi_0^+):=\lim_{\xi\to \xi_0^+} H(\xi)=L$. For now,  assume $H(\xi_0^+)=L-A$ for some $A\in[0,L-P_2]$. We will prove that $A=0$.

Let $\eta_\veps$ be the cut-off functions defined in Appendix \ref{sec.aux} with $R=\veps$, $x=\xi-\xi_0$.
Take it as a test function in the weak formulation of \eqref{P2SS} to get
\begin{equation}\label{eq:helpcont}
\int_{\xi_0-\veps}^{\xi_0+\veps} H(\xi)(\xi \eta_\veps(\xi-\xi_0))' \dd \xi=-\int_{-\infty}^{\xi_0} U(\xi) \FL [\eta_{\veps}(\cdot-\xi_0)](\xi) \dd \xi
\end{equation}
Note that $(\xi\eta_\veps)'=\eta_\veps+\xi\eta_\veps'$. For the left-hand side of \eqref{eq:helpcont}, $H\leq L+P_1$ and $|\eta_{\veps}|\leq 1$ yield
\[
\Bigg|\int_{\xi_0-\veps}^{\xi_0+\veps} H(\xi) \eta_\veps(\xi-\xi_0)\dd \xi\Bigg|\leq(L+P_1)\int_{\xi_0-\veps}^{\xi_0+\veps}\dd \xi=2(L+P_1)\veps \stackrel{\veps\to0^+}{\longrightarrow} 0,
\]
and since $\eta_\veps'$ converges in measure to $-\delta_{\xi_0^+}$ in $(\xi_0,\xi_0+\veps)$ and to $\delta_{\xi_0^-}$ in $(\xi_0-\veps,\xi_0)$ as $\veps\to0^+$ and $H(\xi)\xi$ is $C_\textup{b}((\xi_0-\veps,\xi_0)\cup(\xi_0,\xi_0+\veps))$,
\[
\begin{split}
\int_{\xi_0-\veps}^{\xi_0+\veps} H(\xi)\xi \eta_\veps'(\xi-\xi_0) \dd \xi
&=\frac{1}{\veps}\int_{\xi_0-\veps}^{\xi_0} H(\xi)\xi \eta'\Big(\frac{\xi-\xi_0}{\veps}\Big)\dd \xi+\frac{1}{\veps}\int_{\xi_0}^{\xi_0+\veps} H(\xi)\xi \eta'\Big(\frac{\xi-\xi_0}{\veps}\Big)\dd \xi\\
&\stackrel{\veps\to0^+}{\longrightarrow}(H(\xi_0^-)-H(\xi_0^+))\xi_0=(L-(L-A))\xi_0=A\xi_0.
\end{split}
\]

By Lemma \ref{lem:FLHomogeneity} and the change of variables $\xi=\veps z+\xi_0$, the right-hand side of \eqref{eq:helpcont} yields
\[
\begin{split}
&\int_{-\infty}^{\xi_0} U(\xi) \FL [\eta_{\veps}(\cdot-\xi_0)](\xi) \dd x
=\int_{-\infty}^{\xi_0} U(\xi) \FL \Big[\eta\Big(\frac{\cdot-\xi_0}{\veps}\Big)\Big](\xi) \dd \xi\\
&=\frac{1}{\veps^{2s}}\int_{-\infty}^{\xi_0} U(\xi) \FL \eta\Big(\frac{\xi-\xi_0}{\veps}\Big)\dd \xi
= \veps^{1-2s}\int_{-\infty}^{0} U(\veps z+\xi_0) \FL \eta (z)\dd z.
\end{split}
\]
Note that Lemma \ref{lem:upperbound} gives $U(\veps z+\xi_0)\lesssim |(\veps z + \xi_0) - \xi_0|^{s}=\veps^{s}|z|^{s}$ for all $z<0$. From this estimate and Lemma \ref{lem:CutOff}\eqref{item1:lem:CutOff}, we thus get
\[
\begin{split}
\Bigg|\int_{-\infty}^{\xi_0} U(\xi) \FL \eta_{\veps}(\xi+\xi_0) \dd \xi\Bigg|&\lesssim \veps^{1-s} \int_{-\infty}^{0} |z|^{s} |\FL \eta (z)| \dd z   \lesssim \veps^{1-s} \int_{-\infty}^{0} \frac{|z|^{s} }{(1+|z|)^{1+2s}}  \dd z\\
& \lesssim   \veps^{1-s} \stackrel{\veps\to0^+}{\longrightarrow} 0.
\end{split}
\]
In this way, we have proved that $A\xi_0=0$, and since $\xi_0>0$ (cf. Lemma \ref{lem:posinterp}) it implies that $A=0$ and thus $H(\xi_0^+)=L=H(\xi_0^-)$.
\end{proof}
\subsection{Decay of the solution and mass transfer}\label{sec:decayandmasstransfer}

Since we are working with merely bounded solutions, the concept of conservation of mass does not make sense in general. However, in the case where the tail of the solution is an integrable function, we could have the concept of conservation of transferred mass as stated in Theorem \ref{thm:SS-all}\eqref{thm-SS-item8}.

First we prove an estimate on the tail of the solution for $\xi$ large. This will finish the proof Theorem \ref{thm:SS-all}\eqref{thm-SS-item7}.

\begin{lemma}\label{lem:tailbeha}
Under the assumptions of Theorem \ref{thm:SS-all}, we have that
\begin{equation}\label{eq:tailcontrol}
H(\xi)-(L-P_2)\asymp  1/|\xi|^{2s} \qquad \textup{for all}\qquad  \xi\gg\xi_0.
\end{equation}
\end{lemma}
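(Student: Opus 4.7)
The plan is to derive the asymptotic on $H$ itself by integrating from infinity the two-sided derivative estimate $H'(\eta)\asymp -1/|\eta|^{1+2s}$ already established in Lemma \ref{lem:estimHice}\eqref{LemHice-item2}. In effect, all the delicate nonlocal analysis has been encapsulated in that previous lemma (and ultimately in the pointwise decay of $\FL U$ at $+\infty$, which holds because $U(\eta)\to P_1>0$ as $\eta\to-\infty$ so that the one-sided integral $-c_{1,s}\int_{-\infty}^{\xi_0} U(\eta)/(\xi-\eta)^{1+2s}\,\dd\eta$ behaves exactly like $-|\xi|^{-2s}$). The task here reduces to converting a derivative bound into a bound on $H-(L-P_2)$ by a single integration.

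Concretely, I would fix $\xi$ large enough that the two-sided bound from Lemma \ref{lem:estimHice}\eqref{LemHice-item2} holds uniformly on $[\xi,+\infty)$ with positive constants $c,C$ satisfying $c/\eta^{1+2s}\leq -H'(\eta)\leq C/\eta^{1+2s}$. Since $H\in C^\infty_\textup{b}([\hat\xi,+\infty))$ for any $\hat\xi>\xi_0$ (also by Lemma \ref{lem:estimHice}), since $\lim_{\eta\to+\infty}H(\eta)=L-P_2$ by Lemma \ref{lem:lim}, and since $|H'|$ is integrable at $+\infty$ thanks to the upper bound just stated, the fundamental theorem of calculus gives
\[
H(\xi)-(L-P_2)=-\int_\xi^{+\infty} H'(\eta)\,\dd\eta.
\]
Inserting the two-sided bound on $-H'$ into the integral then yields
\[
\frac{c}{2s}\,\frac{1}{\xi^{2s}}\;\leq\;H(\xi)-(L-P_2)\;\leq\;\frac{C}{2s}\,\frac{1}{\xi^{2s}},
\]
which is exactly the relation \eqref{eq:tailcontrol}.

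I do not anticipate any genuine obstacle; the only point requiring a line of attention is that the $\asymp$ in Lemma \ref{lem:estimHice}\eqref{LemHice-item2} must hold with constants uniform on the whole tail $[\xi,+\infty)$, not just at a single point. This is immediate from how that estimate is derived (via the uniform decay of $\FL U$ at $+\infty$ coming from Lemma \ref{lem:Uice} together with $H'(\eta)=2s\,\FL U(\eta)/\eta$), so nothing further is needed. The whole argument is thus a short calculus step grafted onto the already-proved derivative asymptotics.
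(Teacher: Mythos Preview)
Your proof is correct and is essentially the same argument as the paper's. The paper starts from the integrated identity \eqref{est:xipos}, lets $\xi_2\to+\infty$, and then bounds the integrand $\FL U(\eta)/\eta$ directly from the explicit integral representation of $\FL U$; since $H'(\eta)=2s\,\FL U(\eta)/\eta$ by Lemma \ref{lem:estimHice}\eqref{LemHice-item1}, this is exactly your integration of the derivative bound from Lemma \ref{lem:estimHice}\eqref{LemHice-item2}, only with the two-sided estimate on $H'$ rederived in place rather than cited.
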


\begin{proof}
Letting $\xi_2\to+\infty$ in \eqref{est:xipos}, and recalling that $H(\xi)\to L-P_2$ as $\xi\to+\infty$, we have for all $\xi>\xi_0$ the following relation
\begin{equation}\label{eq:esttailinf}
H(\xi)=(L-P_2)-2s\int_{\xi}^{\infty} \frac{\FL U(\eta)}{\eta}\dd \eta.
\end{equation}
Recall also that $U(\xi)=0$ for all $\xi\geq \xi_0$, and thus, for all $\xi>\xi_0$,
\[
\begin{split}
H(\xi)-(L-P_2)
&\sim\int_{\xi}^\infty \frac{1}{\eta} \left(\int_{-\infty}^{\xi_0} \frac{U(y)}{|\eta-y|^{1+2s}}\dd y\right)\dd \eta.
\end{split}
\]
To obtain the upper bound in \eqref{eq:tailcontrol}, recall that $U(\xi)\leq P_1$ for all $\xi\in \R$, and thus, for all $\xi\gg\xi_0$, we have that
\[
\begin{split}
H(\xi)-(L-P_2)
&\lesssim P_1 \int_{\xi}^\infty\frac{1}{\eta}  \int_{-\infty}^{\xi_0} \frac{1}{|\eta-y|^{1+2s}}\dd y\dd \eta\\
&\lesssim \int_{\xi}^\infty\frac{1}{\eta} \frac{1}{|\eta-\xi_0|^{2s}}\dd \eta\lesssim \int_{\xi}^\infty\frac{1}{\eta}\frac{1}{\eta^{2s}}\dd\eta
\sim \frac{1}{\xi^{2s}}.
\end{split}
\]
To prove the lower bound, we recall that $\xi_0>0$ is the smallest point where $U=0$ (cf. Lemma \ref{eq:uniqueinterface}), and thus $U(\xi)\geq U(0)>0$ for all $\xi\leq0$. Then \eqref{eq:esttailinf} gives
\[
\begin{split}
H(\xi)-(L-P_2)
&\gtrsim \int_{\xi}^\infty \frac{1}{\eta} \left(\int_{-\infty}^{0} \frac{U(y)}{|\eta-y|^{1+2s}}\dd y\right)\dd \eta\\
&\gtrsim  \int_{\xi}^\infty \frac{1}{\eta} \left(\int_{-\infty}^{0} \frac{1}{|\eta-y|^{1+2s}}\dd y\right)\dd \eta\sim \int_{\xi}^\infty \frac{1}{\eta^{1+2s}}\dd \eta
\sim \frac{1}{\xi^{2s}}.
\end{split}
\]
This finishes the proof.
\end{proof}

\begin{lemma}\label{lem:ConservationOfMass}
Under the assumptions of Theorem \ref{thm:SS-all}, we have that for $s>1/2$
\begin{equation}\label{eq:ConservationOfMass}
\int_{-\infty}^0\big( (L+P_1)- H(\xi)\big)\dd \xi= \int_0^\infty \big(H(\xi)-(L-P_2)\big) \dd \xi<+\infty.
\end{equation}
\end{lemma}

\begin{remark}
The case $s\leq 1/2$ is different since Lemma \ref{lem:tailbeha} gives
$$
H(\xi)-(L-P_2)\asymp  1/|\xi|^{2s} \qquad \text{for all}\qquad  \xi\gg\xi_0,
$$
so that the right-hand side of the equality in \eqref{eq:ConservationOfMass} is infinite. Hence, we cannot speak about global mass transfer.
\end{remark}

\begin{proof}[Proof of Lemma \ref{lem:ConservationOfMass}]
Let $\eta_\veps$ be the cut-off functions defined in Appendix \ref{sec.aux}  with $R=\frac{1}{\veps}$. Now, choose $\psi(x,t)=\mathbf{1}_{[0,1]}(t)\eta_\veps(x)$ as test function in Definition \ref{def:distSolfrac}  to obtain
$$
\int_\R \big(h(x,1)-h_{0}(x)\big)\eta_\veps(x)\dd x=-\int_0^1\int_\R u(x,t)\FL\eta_\veps(x)\dd x\dd t.
$$
We can consider this test function, by approximation in $C_\textup{c}^\infty$, using Lemma \ref{lem:CutOff}.
To continue, we change the variables $y=\veps x$ and use that $u\leq P_1$ and Lemmas \ref{lem:FLHomogeneity} and \ref{lem:CutOff} to get
\begin{equation*}
\Bigg|-\int_0^1\int_\R u(x,t)\FL\eta_\veps(x)\dd x\dd t\Bigg|\leq \veps^{2s-1}P_1\|(-\Delta)^s\eta\|_{L^1(\R)}.
\end{equation*}
If $s>1/2$, the above quantity goes to zero when $\veps\to0^+$. Hence,
$$
\lim_{\veps\to0^+}\int_\R \big(h(x,1)-h_0(x)\big)\eta_\veps(x)\dd x=0,
$$
or equivalently,
$$
\lim_{\veps\to0^+}\Bigg(\int_0^\infty \big(H(\xi)-(L-P_2)\big)\eta_\veps(\xi)\dd \xi+\int_{-\infty}^0\big(H(\xi)-(L+P_1)\big)\eta_\veps(\xi)\dd \xi\Bigg)=0.
$$
Note that by the properties of $\eta_\veps$ and Lemma \ref{lem:tailbeha}, the Lebesgue dominated convergence theorem yields
$$
\lim_{\veps\to0^+}\int_0^\infty \big(H(\xi)-(L-P_2)\big)\eta_\veps(\xi)\dd \xi=\int_0^\infty \big(H(\xi)-(L-P_2)\big)\dd \xi<+\infty.
$$
As a consequence,
$$
\lim_{\veps\to0^+}\int_{-\infty}^0\big((L+P_1)-H(\xi)\big)\eta_\veps(\xi)\dd \xi= \int_0^\infty (H(\xi)-(L-P_2))\dd \xi<+\infty.
$$
The sequence $\{\big((L+P_1)-H(\xi)\big)\eta_\veps(\xi)\}_{\veps>0}$ is nonnegative by Lemma \ref{lem:abovebelow} and converges monotonically to $(L+P_1)-H(\xi)$ as $\veps\to0^+$. An application of the monotone convergence theorem then concludes the proof.
\end{proof}


\section{Propagation properties}
\label{sec.fpp}

Since we have established suitable properties of the selfsimilar solutions, we can prove that the temperature $u(x,t):=\Phi(h(x,t))=(h(x,t)-L)_+$ has finite speed of propagation with precise estimates on the maximal growth of the support in time. We  only need to assume very mild properties on the initial condition $h_0$.

\begin{theorem}[Finite speed of propagation for $u$]\label{coro:NFiniteSpeed2}
Let $h\in L^\infty(Q_T)$ be the very weak solution of \eqref{P1}--\eqref{P1-Init} with $ h_0\in L^\infty(\R^N)$ as initial data and $u:=\Phi(h)$. If $\supp\{\Phi(h_0(x)+\veps)\}\subset B_R(x_0)$ for some $\veps>0$, $R>0$, and $x_0\in \R^N$, then:
\begin{enumerate}[{\rm (a)}]
\item\label{coro:NFiniteSpeed2-item-a}  \textup{(Growth of the support)}  $\supp\{{u(\cdot,t)\}}\subset B_{R+\xi_0 t^{\frac{1}{2s}}}(x_0)$ for all $t\in(0,T)$ and some $\xi_0>0$ depending on the quantity $\veps^{-1}\|\Phi(h_0)\|_{L^\infty(\R^N)}$ and $s$.
\item \textup{(Maximal support)}  $\supp\{{u(\cdot,t)\}}\subset B_{\tilde{R}}(x_0)$ for all  $t\in(0,+\infty)$ with
$$
\tilde{R}= \left(\veps^{-1}\|\Phi(h_0)\|_{L^\infty(\R^N)}+1\right)^{\frac{1}{N}}R.
$$
\end{enumerate}
\end{theorem}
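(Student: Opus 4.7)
The natural plan is to prove (a) by using the half-space selfsimilar solution of Corollary \ref{cor:NSS-all} as a pointwise upper barrier, once for every direction, and to prove (b) by an $L^1$-contraction argument against a constant stationary solution.

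\emph{Step 1 (Part (a) — directional barriers).} Fix a unit vector $e\in\mathbb{S}^{N-1}$ and let $\tilde h_e$ be the selfsimilar solution from Remark \ref{rem:hyperplanes} built on the hyperplane $\mathcal{H}_e:=\{x:(x-x_0)\cdot e = R\}$ (tangent to $\overline{B_R(x_0)}$ on its $e$-side), with $P_1:=\|\Phi(h_0)\|_{L^\infty(\R^N)}$ on the side $\mathcal{H}_e^-$ containing $B_R(x_0)$ and $P_2:=\veps$ on $\mathcal{H}_e^+$. The hypothesis $\supp\{\Phi(h_0+\veps)\}\subset B_R(x_0)$ is exactly the pointwise statement $h_0\le L-\veps$ outside $B_R(x_0)$; since $\mathcal{H}_e^+\subset \R^N\setminus B_R(x_0)$ and $h_0\le L+\|\Phi(h_0)\|_{L^\infty}$ everywhere, one has $\tilde h_e(\cdot,0)\ge h_0$ a.e. The comparison principle Theorem \ref{thm:bdddistsol}\eqref{thmfraclinf2-item2} then gives $h\le \tilde h_e$ on $Q_T$, hence $u\le \tilde u_e$. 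By Corollary \ref{cor:NSS-all}, $\tilde u_e(\cdot,t)$ vanishes on the half-space $\{(x-x_0)\cdot e > R+\xi_0 t^{1/(2s)}\}$ with $\xi_0=\xi_0(s,\veps/\|\Phi(h_0)\|_{L^\infty})$. Intersecting over all $e\in\mathbb{S}^{N-1}$ (for a given $y$ with $|y-x_0|>R+\xi_0 t^{1/(2s)}$, choose $e_y=(y-x_0)/|y-x_0|$) yields part (a).

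\emph{Step 2 (Part (b) — $L^1$-contraction against a constant state).} The constant $\hat h\equiv L-\veps$ is a stationary very weak solution since $\Phi(L-\veps)=0$. The hypothesis forces $(h_0-\hat h)_+$ to be supported in $B_R(x_0)$ and bounded there by $\|\Phi(h_0)\|_{L^\infty}+\veps$, so $(h_0-\hat h)_+\in L^1(\R^N)$ with mass at most $(\|\Phi(h_0)\|_{L^\infty}+\veps)|B_R|$. Theorem \ref{thm:bdddistsol}\eqref{thmfraclinf2-item3} then propagates this bound to every $t>0$. On the set $\{u(\cdot,t)>0\}$ one has $h(\cdot,t)-\hat h\ge \veps$, whence
\[
\veps\,|\supp\{u(\cdot,t)\}|\le \int_{\R^N}(h(\cdot,t)-(L-\veps))_+\,dx\le (\|\Phi(h_0)\|_{L^\infty}+\veps)|B_R|=\veps\,|B_{\tilde R}|,
\]
giving $|\supp\{u(\cdot,t)\}|\le |B_{\tilde R}|$.

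\emph{Main difficulty.} The sharper geometric statement $\supp\{u(\cdot,t)\}\subset B_{\tilde R}(x_0)$ in (b) does not follow from the above measure bound alone, and this is the delicate step. The plan to close it is to combine three ingredients: the ball containment from Step 1 (which places $\supp\{u(\cdot,t)\}$ inside a ball \emph{centred at $x_0$}), the uniform-in-$t$ measure bound from Step 2, and the conservation of positivity of the temperature from Section \ref{sec.conspos}, which makes $\{u(\cdot,t)>0\}$ monotone nondecreasing in $t$ and rooted in $B_R(x_0)\subset B_{\tilde R}(x_0)$. Since any violation of the geometric containment would produce, for all subsequent times, a ``finger'' reaching outside $B_{\tilde R}(x_0)$, the monotone measure bound $|B_{\tilde R}|$ together with the radial support control of Step 1 must be leveraged to exclude such fingers; carrying out this last contradiction cleanly (most likely by applying the directional barriers of Step 1 iteratively in time, with $\veps$ and $R$ updated using the gained information) is the technical heart of part (b).
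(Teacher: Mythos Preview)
Your Step~1 for part (a) is exactly the paper's argument: compare with the half-space selfsimilar solution (Corollary~\ref{cor:NSS-all}/Remark~\ref{rem:hyperplanes}) with $P_1=\|\Phi(h_0)\|_{L^\infty}$, $P_2=\veps$, once for each hyperplane tangent to $\partial B_R(x_0)$, and intersect. Nothing to add there.

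Part (b), however, has a real gap, and you identify it yourself: from the $L^1$-contraction against $L-\veps$ you only get the \emph{measure} bound $|\supp\{u(\cdot,t)\}|\le |B_{\tilde R}|$, not the geometric inclusion $\supp\{u(\cdot,t)\}\subset B_{\tilde R}(x_0)$. Your proposed rescue (conservation of positivity plus iterated directional barriers with updated $\veps,R$) does not close: the positivity set being monotone in $t$, rooted in $B_R(x_0)$, of measure at most $|B_{\tilde R}|$, and contained in $B_{R+\xi_0 t^{1/(2s)}}(x_0)$ still allows long thin ``fingers'' reaching well outside $B_{\tilde R}(x_0)$ for all finite $t$. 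To re-apply Step~1 at a later time you would need the support at that time to already sit in a ball of some radius $R'$ centred at $x_0$, which is precisely the conclusion you lack; the argument is circular.

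The paper avoids this entirely by inserting a \emph{radially symmetric} comparison function before doing the $L^1$ estimate. Define
\[
\hat h_0(x):=\begin{cases} L+\|\Phi(h_0)\|_{L^\infty(\R^N)} & x\in B_R(x_0),\\ L-\veps & x\in B_R(x_0)^c,\end{cases}
\]
and let $\hat h$ be the corresponding solution with $\hat u=(\hat h-L)_+$. Since $h_0\le \hat h_0$, comparison gives $\supp\{u(\cdot,t)\}\subset\supp\{\hat u(\cdot,t)\}$, so it suffices to bound the latter. Because $\hat h_0$ is radial about $x_0$, the solution $\hat h(\cdot,t)$ is radial (and $\hat u$ is continuous), so $\supp\{\hat u(\cdot,t)\}$ is automatically a closed ball $\overline{B_{r(t)}(x_0)}$. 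Now the $L^1$-contraction against $L-\veps$, applied to $\hat h$, gives $\veps|B_{r(t)}|\le(\|\Phi(h_0)\|_{L^\infty}+\veps)|B_R|$, i.e.\ $r(t)\le\tilde R$. The radial-symmetry step is the missing idea; once you have it, the measure bound \emph{is} the geometric bound.
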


The proof of Theorem \ref{coro:NFiniteSpeed2}\eqref{coro:NFiniteSpeed2-item-a} is based on the one dimensional finite speed of propagation stated in Lemma \ref{thm:NFiniteSpeed2}.

\begin{remark} \rm
Note that the assumption $\veps>0$ in Theorem \ref{coro:NFiniteSpeed2}
is crucial. Otherwise, the result might not be true, as in the following example: Take
\begin{equation*}
h_0(x):=\begin{cases}
L+1  \qquad\qquad&\textup{if}\quad   x\in B_1(0),\\
L  \qquad\qquad&\textup{if}\quad   x\in B_1(0)^c.
\end{cases}
\end{equation*}
Clearly $\supp\{\Phi(h_0)\}=B_1(0)$ but $\supp\{\Phi(h_0+\veps)\}=\R^N$ for all $\veps>0$. Here the property of finite speed of propagation fails since by comparison $h(x,t)\geq L$ in $Q_T$ and thus $u(x,t)=(h(x,t)-L)_+=h(x,t)-L$ and satisfies the fractional heat equation
\[
\partial_tu+\FL u=0 \qquad \textup{in} \qquad Q_T
\]
with $u(x,0)=\indik_{B_1(0)}(x)$.  The solution  $u$ has infinite speed of propagation, i.e. $u(x,t)>0$ for all $t\in(0,T)$ (see e.g. \cite{BoSiVa17}).  See also Figures \ref{fig:instantemergin} and \ref{fig:noninstantemergin} for examples related to this phenomenon.
\end{remark}

\subsection{Proof of the finite propagation theorem}
\label{sec.ppp}
The first part of the proof of Theorem \ref{coro:NFiniteSpeed2} is based on the following 1-D result on the speed of propagation. Recall that $\mathcal{H}$ is a hyperplane in $\R^N$ such that the open sets $\mathcal{H}^+, \mathcal{H}^-$ satisfy $\mathcal{H}^+\cup \mathcal{H}^-=\R^N\setminus \mathcal{H}$ and $\mathcal{H}^+\cap \mathcal{H}^-=\emptyset$ (cf. Remark \ref{rem:hyperplanes}).  We denote $\vec{v}_{\mathcal{H}^{\perp}}$ as the vector orthogonal to  $\mathcal{H}$ with length 1 such that given any $x\in \mathcal{H}$, then $x+\vec{v}_{\mathcal{H}^{\perp}}\in \mathcal{H}^+$.

\begin{lemma}[One-directional finite speed of propagation for $u$]\label{thm:NFiniteSpeed2}
Let $h\in L^\infty(Q_T)$ be the very weak solution of \eqref{P1}--\eqref{P1-Init} with $ h_0\in L^\infty(\R^N)$ as initial data. If $\supp\{\Phi(h_0(x)+\veps) \}\subset \mathcal{H}^-$
for some hyperplane $\mathcal{H}\subset \R^N$ and some $\veps>0$,
then $\supp\{{u(\cdot,t)\}}\subset \mathcal{H}^-+\xi_0 t^{1/(2s)} \vec{v}_{\mathcal{H}^{\perp}}$ for some $\xi_0>0$ and all $t\in(0,T)$.

\end{lemma}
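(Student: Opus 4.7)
The plan is to dominate the solution $h$ from above by the selfsimilar solution from Corollary \ref{cor:NSS-all} (in its rotated/translated form provided by Remark \ref{rem:hyperplanes}) and then read off the support bound from its free boundary, whose location at time $t$ is precisely the hyperplane $\mathcal{H}+\xi_0 t^{1/(2s)}\vec{v}_{\mathcal{H}^\perp}$. Once the comparison is in place, the conclusion is immediate because the dominating selfsimilar solution already has temperature identically zero on the forbidden side.

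By translational and rotational invariance of \eqref{P1}, I would first reduce to the case $\mathcal{H}=\{x_1=0\}$, $\mathcal{H}^\pm=\{\pm x_1 > 0\}$, $\vec{v}_{\mathcal{H}^\perp}=e_1$. The hypothesis $\supp\{\Phi(h_0+\veps)\}\subset \mathcal{H}^-$ says exactly that $h_0(x)\le L-\veps$ for a.e. $x$ with $x_1\ge 0$. Then I set
\[
P_2:=\veps,\qquad P_1:=\max\{\|h_0\|_{L^\infty(\R^N)}-L,\ \veps\},
\]
so that $P_1,P_2>0$, and let $\tilde h$ be the bounded very weak solution produced by Corollary \ref{cor:NSS-all} with step initial data $\tilde h_0(x)=L+P_1$ for $x_1\le 0$ and $\tilde h_0(x)=L-P_2$ for $x_1>0$. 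By the choice of $P_1$ and $P_2$, one has $h_0\le \tilde h_0$ a.e.\ in $\R^N$.

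The comparison principle for bounded very weak solutions, Theorem \ref{thm:bdddistsol}\eqref{thmfraclinf2-item2}, then gives $h\le \tilde h$ a.e.\ in $Q_T$, and since $\Phi$ is nondecreasing this implies $u(x,t)\le \tilde u(x,t):=\Phi(\tilde h(x,t))$. By Theorem \ref{thm:SS-all}\eqref{thm-SS-item3}, extended to $\R^N$ via Corollary \ref{cor:NSS-all}, there exists $\xi_0>0$ depending only on $s$ and $P_2/P_1$ such that $\tilde u(\cdot,t)\equiv 0$ on $\{x_1\ge \xi_0 t^{1/(2s)}\}$, and hence
\[
\supp\{u(\cdot,t)\}\subset \mathcal{H}^-+\xi_0 t^{1/(2s)}\vec{v}_{\mathcal{H}^\perp},
\]
which is the assertion of the lemma.

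The substantive work has already been absorbed into Theorem \ref{thm:SS-all}; in this proof the decisive ingredient is the \emph{strict} positivity $\xi_0>0$ from Lemma \ref{lem:posinterp}, without which the barrier would not be genuinely confining. The only minor technical point is ensuring that $P_1$ stays strictly positive even when $h_0$ never exceeds $L$, which is why the maximum with $\veps$ is taken in its definition.
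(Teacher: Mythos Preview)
Your proof is correct and follows essentially the same approach as the paper: reduce by rotation/translation to $\mathcal{H}=\{x_1=0\}$, dominate $h_0$ by the step datum $\tilde h_0$ of Corollary \ref{cor:NSS-all} with $P_2=\veps$, apply the comparison principle, and read off the support from the free boundary of the selfsimilar solution. Your choice $P_1=\max\{\|h_0\|_{L^\infty}-L,\veps\}$ is in fact slightly more careful than the paper's $P_1=\esssup_{\R^N}\Phi(h_0)$, since the latter vanishes in the degenerate case $h_0\le L$ a.e.\ (where the conclusion is trivial anyway), whereas your version keeps $P_1>0$ uniformly.
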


\begin{proof}
Without loss of generality, we can assume that $\mathcal{H}=\{0\}\times \R^{N-1}$ (by translation and rotation invariance of the equation). We take $\mathcal{H}^-=(-\infty,0)\times  \R^{N-1}$ and $\mathcal{H}^+=(0,+\infty)\times  \R^{N-1}$. Consider $\tilde{h}_0$ and $\tilde{h}$ given by Corollary \ref{cor:NSS-all} with $P_1=\esssup_{\R^N}\{\Phi(h_0)\}$ and $P_2=\veps$.  Clearly $h_0 \leq \tilde{h}_0$ and then by comparison $h(x,t) \leq \tilde{h}(x,t)$. Consequently $0\leq u(x,t)\leq \tilde{u}(x,t)=\Phi(\tilde{h}(x,t))$. Thus
\[
\supp{u(\cdot,t)}\subset \supp{\tilde{u}(\cdot,t)} = (-\infty, t^{\frac{1}{2s}} \xi_0] \times \R^{N-1}.\qedhere
\]
\end{proof}

\begin{proof}[Proof of Theorem \ref{coro:NFiniteSpeed2}]
The estimate on the growth of the support follows from applying Lemma \ref{thm:NFiniteSpeed2} for every hyperplane tangent to  $ \partial B_R(x_0)$.

We show now that there is a maximum support for $u$. Without loss of generality assume that $x_0=0$ and take
\begin{equation*}
\hat{h}_0(x):=\begin{cases}
L+\|\Phi(h_0)\|_{L^\infty(\R^N)}  \qquad\qquad&\textup{if}\qquad   x\in B_R(0)\\
L-\veps \qquad\qquad&\textup{if}\qquad   x\in B_R(0)^c,
\end{cases}
\end{equation*}
and let $\hat{h}$ be the corresponding very weak solution of \eqref{P1}--\eqref{P1-Init} and $\hat{u}=(\hat{h}-L)_+$. Clearly, $h_0\leq \hat{h}_0$ and thus, by comparison, $h\leq \hat{h}$. Thus, $\supp\{u(\cdot,t)\}\subset \supp\{\hat{u}(\cdot,t)\}$. Then it is enough to prove the estimate of maximum support for $\hat{u}$.

 Since $L-\veps$ is a stationary solution of \eqref{P1}--\eqref{P1-Init}, then $L^1$ contraction implies
 \[
 \int_{\R^N}(\hat{h}(x,t)-(L-\veps))_+\dd x\leq \int_{\R^N}(\hat{h}_0(x)-(L-\veps))_+\dd x= \left(\|\Phi(h_0)\|_{L^\infty(\R^N)}+\veps\right)|B_R|
 \]
 On the other hand, $\hat{h}(x,t)\geq L$ for $x\in \supp\{\hat{u}(\cdot,t)\}$, and then
 \[
\begin{split}
\int_{\R^N}(\hat{h}(x,t)-(L-\veps))_+\dd x&\geq\int_{\supp\{\hat{u}(\cdot,t)\}}(\hat{h}(x,t)-(L-\veps))_+\dd x\geq \veps |\supp\{\hat{u}(\cdot,t)\}|.
\end{split}
\]
Thus,
\[
|\supp\{\hat{u}(\cdot,t)\}|\leq (\veps^{-1}\|\Phi(h_0)\|_{L^\infty(\R^N)}+1)|B_R|.
\] Finally we note that $h_0$ being radially symmetric implies that $u(\cdot,t)$ is also radially symmetric (and continuous) for all $t>0$, and thus, $\supp\{\hat{u}(\cdot,t)\}= B_{\tilde{R}}(0)$ for some $\tilde{R}>0$. The relation between $\tilde{R}$ and $R$ is immediate from the estimate above.
\end{proof}


\subsection{Infinite speed of propagation and tail estimates of the enthalpy}

We are also able to prove some more propagation properties of the solution. In the following result we do not need to rely on the properties of selfsimilar solutions.
Here, we  provide results on infinite speed of propagation of the enthalpy variable $h$, and give precise estimates on the tail of the solution.  For simplicity, we will state it only for positive solutions, but as usual, the result can be extended for any kind of bounded solutions due to Remark \ref{rem:translationofL}.

\begin{theorem}[Infinite speed of propagation and tail behaviour for $h$]\label{thm:infinitespeedh}
Let $0\leq h\in L^\infty(Q_T)$ be the very weak solution of \eqref{P1}--\eqref{P1-Init} with $0\leq h_0\in L^\infty(\R^N)$ as initial data.
\begin{enumerate}[{\rm (a)}]
\item If $h_0\ge L+ \veps>L$ in $B_\rho(x_1)$ for $x_1\in \R^N$ and $\rho,\veps>0$, then $h(\cdot,t)>0$ for all $t>0$.
\item If additionally $\supp\{h_0\}\subset B_\eta(x_0)$ for $x_0\in \R^N$ and $\eta>0$ , then
\begin{equation}
\label{eq:tailh}
h(x,t)\asymp 1/|x|^{N+2s} \qquad \textup{for all $t\in(0,T)$ and $|x|$ large enough.}
\end{equation}
\end{enumerate}
\end{theorem}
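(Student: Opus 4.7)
Both parts rest on integrating the equation $\partial_t h = -\FL u$ in time at a point $x$ that sits outside the evolving temperature support. At such an $x$, no principal value is needed and $\FL u(x,s) = -c_{N,s}\int_{\R^N} u(y,s)/|x-y|^{N+2s}\,dy \leq 0$, which produces the pointwise representation
\begin{equation*}
h(x,t) \;=\; h_0(x) \,+\, c_{N,s} \int_0^t \int_{\R^N} \frac{u(y,s)}{|x-y|^{N+2s}}\,dy\,ds.
\end{equation*}
Making this identity rigorous at such points is the main technical step: I would test the very weak formulation against a sequence of nonnegative smooth functions approximating $\mathbf{1}_{B_r(x)}\otimes \mathbf{1}_{[0,t]}$, let $r\to 0^+$, and use continuity of $u$ (Theorem~\ref{thmfrac-cont}) together with the integrable decay of the kernel $1/|x-y|^{N+2s}$ to pass to the limit; the time endpoint is handled via Theorem~\ref{thm:bdddistsol}\eqref{thmfraclinf2-item5}.

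\textbf{Proof of (a).} The hypothesis $h_0\geq L+\veps$ on $B_\rho(x_1)$ gives $u_0\geq\veps$ on $B_\rho(x_1)$. Comparing with the Dirichlet fractional heat equation on $B_\rho(x_1)$ (the lower comparison mentioned in the outline as the limit $L\to+\infty$) furnishes a quantitative positivity statement: there exist $0<\tau_1<\tau_2<T$ and $\delta>0$ with $u(y,s)\geq \delta$ on $B_{\rho/2}(x_1)\times[\tau_1,\tau_2]$. For any $(x,t)\in \R^N\times(0,T)$: if $u(x,t)>0$ then $h(x,t)=u(x,t)+L>0$ for free; otherwise, a small space-time neighborhood of $(x,t)$ lies in $\{u=0\}$ by continuity, the displayed representation applies, and the strict positivity on $B_{\rho/2}(x_1)\times[\tau_1,\tau_2]$ makes the right-hand side strictly positive.

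\textbf{Proof of (b).} The additional compact-support assumption on $h_0$, together with finite speed of propagation for the temperature (Theorem~\ref{coro:NFiniteSpeed2}), gives a uniform-in-time bound $\supp\{u(\cdot,s)\}\subset B_{\tilde R}(x_0)$. For $|x|$ sufficiently large we have $u(x,s)=0$ for every $s\in(0,T)$ and $h_0(x)=0$, so the displayed identity reduces to
\begin{equation*}
h(x,t) \;=\; c_{N,s}\int_0^t\int_{B_{\tilde R}(x_0)} \frac{u(y,s)}{|x-y|^{N+2s}}\,dy\,ds,
\end{equation*}
with $|x-y|\asymp |x|$ on the domain of integration. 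The upper bound then follows from mass conservation (Theorem~\ref{thm:bdddistsol}\eqref{thmfraclinf2-item4}), since $\int u(y,s)\,dy\leq \int h(y,s)\,dy=\|h_0\|_{L^1(\R^N)}$, giving $h(x,t)\lesssim t\|h_0\|_{L^1}/|x|^{N+2s}$. The lower bound follows by inserting the localized positivity $u\geq\delta$ on $B_{\rho/2}(x_1)\times[\tau_1,\tau_2]$ from part~(a) into the integral, yielding $h(x,t)\gtrsim (\tau_2-\tau_1)\delta|B_{\rho/2}|/|x|^{N+2s}$. These matching bounds deliver the stated $\asymp$ asymptotics; the main obstacle in the whole argument remains the rigorous justification of the pointwise representation at ice points, everything else being a careful integration.
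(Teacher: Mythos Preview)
Your approach shares the core idea with the paper's proof---integrate $\partial_t h=-\FL u$ in time at points lying outside the temperature support---but there are two genuine gaps, and the paper organizes the extension to all times differently.

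\textbf{First gap.} Your positivity window $[\tau_1,\tau_2]$ has $\tau_1>0$, so for $t\in(0,\tau_1)$ the integral $\int_0^t$ never reaches the region where $u\ge\delta$, and neither (a) nor the lower bound in (b) follows as written. You need $\tau_1=0$, and this \emph{is} available: pass first to a comparison function below $h$ whose initial temperature is continuous on $B_\rho(x_1)$ (e.g.\ take $(L+\veps)\indik_{B_\rho(x_1)}$ as initial enthalpy), then Theorem~\ref{thmfrac-cont} gives continuity of $u$ down to $t=0$ on the open ball and hence $u\ge\veps/2$ on $B_{\rho/2}(x_1)\times[0,\tau_2]$.

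\textbf{Second gap.} In (a), for the case $u(x,t)=0$, knowing only that a \emph{small} space-time neighborhood of $(x,t)$ lies in $\{u=0\}$ is not enough to invoke the displayed representation, which integrates from $0$ to $t$ and therefore requires $x$ to lie outside $\supp\{u(\cdot,s)\}$ for \emph{every} $s\in(0,t)$. This can be patched by appealing to conservation of positivity (Theorem~\ref{thm:conspositivityu}): if $u(x,t)=0$ then $u(x,s)=0$ for all $s\le t$. As written, the step does not go through.

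\textbf{How the paper proceeds instead.} The paper sidesteps both issues by comparing from the outset with $h_{\rho/8}\le h$ having initial data $(L+1)\indik_{B_{\rho/8}(x_1)}$, and using finite propagation (Theorem~\ref{coro:NFiniteSpeed2}) to guarantee $\supp\{u_{\rho/8}(\cdot,s)\}\subset B_{\rho/4}(x_1)$ for all $s\le t_1^*$. Thus the representation is valid for every $x\in B_{\rho/2}(x_1)^c$ on the full interval $(0,t_1^*]$ with no need for Theorem~\ref{thm:conspositivityu}; combined with $u_{\rho/8}\ge\frac12$ on $B_{\rho/16}(x_1)\times(0,t_2^*]$ this yields the small-time bound $h(x,t)\gtrsim t\,|x-x_1|^{-N-2s}$. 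The extension to all $t\in(0,T)$ is then done not through your $[\tau_1,\tau_2]$ device but via the elementary observation that $\min\{h(\cdot,t^*),L\}$ is a stationary subsolution of \eqref{P1}, so $h(\cdot,t)\ge\min\{h(\cdot,t^*),L\}$ for all $t\ge t^*$. Your treatment of (b), upper bound included (the paper uses the $L^\infty$ bound on $u$ rather than mass conservation, but either works), is essentially the paper's.
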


\begin{remark}
Note that in the proof below, we additionally prove that there exits a small time $t^*$ such that
$h$ exhibits the precise tail behaviour:
$$
h(x,t)\asymp t/|x|^{N+2s} \qquad \mbox{for all $t\in(0,t^*)$ and $|x|$ large enough.}
$$
This mimics the tail behaviour of the solutions of the linear fractional heat equation for small times, see e.g. \cite{BoSiVa17}.
\end{remark}

\begin{proof}[Proof of Theorem \ref{thm:infinitespeedh}] Assume by simplicity that $h_0\geq L+1$ in  $B_\rho(x_1)$ (the argument will work in the same way replacing $L+1$ with $L+\veps$ for any $\veps>0$). Consider the initial data
\begin{equation*}
h_{0,\rho}(x):=\begin{cases}
L+1  \qquad\qquad&\textup{if}\quad   x\in B_\rho(x_1)\\
0  \qquad\qquad&\textup{if}\quad   x\in B_\rho(x_1)^c
\end{cases}
\end{equation*}
and let $h_\rho$ be the corresponding solution of \eqref{P1}--\eqref{P1-Init}. Since $h_{0,\rho}\leq h_0$, $h_\rho\leq h$ again by comparison.

\smallskip
\noindent\textbf{1)} \emph{$h(\cdot,t)>0$ for every $0<t\leq t^*<T$.}
By continuity of $\Phi(h)$, we have that there exists a time $t^*_0<T$ such that $h(x,t)\geq L+\frac{1}{2}>0$ in $(x,t)\in B_{\frac{\rho}{2}}(x_1)\times (0,t^*_0]$. Now, consider the initial data $h_{0,\frac{\rho}{8}}$ and the solution $h_{\frac{\rho}{8}}$. Clearly, $h_{0,\frac{\rho}{8}}\leq h_{0,\rho}$, and then $h_{\frac{\rho}{8}}\leq h_\rho$. By Theorem \ref{coro:NFiniteSpeed2},
\[
\supp\{(h_{\frac{\rho}{8}}(\cdot,t)-L)_+\}\subset B_{\frac{\rho}{8}+\xi_0 t^{\frac{1}{2s}}}(x_1)
\]
for some $\xi_0>0$. Thus, we can take $t_1^*$ small enough such that
\[
\supp\{(h_{\frac{\rho}{8}}(\cdot,t)-L)_+\}\subset B_{\frac{\rho}{4}}(x_1) \quad \textup{for all}\quad  t\in (0,t_1^*],
\]
which implies that $u_{\frac{\rho}{8}}(x,t):=(h_{\frac{\rho}{8}}(x,t)-L)_+=0$ for all $(x,t)\in  B_{\frac{\rho}{4}}(x_1)^c\times (0,t_1^*]$. Thus, we can deduce, as in Lemma \ref{lem:estimHice}, the following estimate
\[
h_\frac{\rho}{8}(x,t)-h_{0,\frac{\rho}{8}}(x)=-\int_0^t \FL u(x,\tau)\dd \tau \quad \textup{for a.e.} \quad (x,t)\in  B_{\frac{\rho}{2}}(x_1)^c\times (0,t_1^*].
\]
Again, by continuity, we can assume that $h_\frac{\rho}{8}(x,t)\geq L+\frac{1}{2}>0$ in $(x,t)\in B_{\frac{\rho}{16}}(x_1)\times(0,t^*_2]$. Then, for all $(x,t)\in  B_{\frac{\rho}{2}}(x_1)^c\times (0,\min\{t_1^*,t_2^*\}]$ we have that
\begin{equation}\label{eq:lowerboundh}
\begin{split}
h_\frac{\rho}{8}(x,t)&=-\int_0^{t} \int_{\R^N} \frac{u_\frac{\rho}{8}(x,\tau)-u_\frac{\rho}{8}(z,\tau)}{|x-z|^{N+2s}}\dd z\dd \tau=\int_0^{t} \int_{\R^N} \frac{u_\frac{\rho}{8}(z,\tau)}{|x-z|^{N+2s}}\dd z\dd \tau\\
&\geq \int_0^{t} \int_{B_{\frac{\rho}{16}}(x_1)} \frac{\frac{1}{2}}{|x-z|^{N+2s}}\dd z\dd \tau=\frac{t}{2}\int_{B_{\frac{\rho}{16}}(x_1)} \frac{\dd z}{|x-z|^{N+2s}}\\
&\geq \frac{t}{2} \Big(|x-x_1|+\frac{\rho}{16}\Big)^{-N-2s}.
\end{split}
\end{equation}
The last estimate show that $
h(x,t)\geq h_\frac{\rho}{8}(x,t)>0$ for all  $(x,t)\in B_{\frac{\rho}{2}}(x_1)^c \times (0,\min\{t_1^*,t_2^*\}]$. Since  $h(x,t)>0$ for $(x,t)\in  B_{\frac{\rho}{2}}(x_1) \times (0,t^*_0]$, we conclude that $
h(x,t)>0$ for all  $(x,t)\in \R^N \times (0,\min\{t_0^*,t_1^*,t_2^*\}]$.

Note also that estimate \eqref{eq:lowerboundh} gives a quantitative estimate for $|x|$ large enough:
\[
h(x,t)\geq h_\frac{\rho}{8}(x,t) \gtrsim \frac{t}{|x|^{N+2s}} \qquad \textup{for all} \qquad t\in (0,\min\{t_0^*,t_1^*,t_2^*\}].
\]

\smallskip
\noindent\textbf{2)} \emph{$h(\cdot,t)>0$ for every $0<t<T$.}
Our strategy is to prove preservation of positivity, i.e., if $h(x_0,t_0)>0$ for some $(x_0,t_0)\in\R^N\times(0,T)$ then $h(x_0,t)>0$ for all $t\in [t_0,T)$. Actually we will prove even more: if additionally $h(x_0,t_0)\geq L$ for some $(x_0,t_0)\in\R^N\times(0,T)$, then $h(x_0,t)\geq L$ for all $t\in [t_0,T)$.

The latter is actually pretty simple for our problem: Fix a time $t^*\geq0$ and define
\[
\hat{h}(x,t):= \min\{h(x,t^*),L\}.
\]
Since $\hat{h}\leq L$, $\hat{h}$ is a stationary solution of \eqref{P1}--\eqref{P1-Init}. Note also that
\[
\hat{h}(x,t^*)\leq h(x,t^*) \qquad \textup{for a.e.} \qquad x\in \R^N.
\]
Then, by comparison,
\[
\min\{h(x,t^*),L\}=\hat{h}(x,t)\leq h(x,t) \qquad \textup{for a.e.} \qquad (x,t)\in \R^N\times[t^*,T).
\]
This fact, together with the lower bound given by \eqref{eq:lowerboundh} show that for all $t\in[\min\{t_0^*,t_1^*,t_2^*\},T)$ we have that
\[
 h(x,t)\geq \min\bigg\{L,\frac{\min\{t_0^*,t_1^*,t_2^*\}}{2} (|x-x_1|+\frac{\rho}{16})^{-N-2s}\bigg\}
\]
for a.e. $x\in B_{\frac{\rho}{2}}(x_1)^c$. In particular, this shows that the lower bound in \eqref{eq:tailh} is valid for all times and big enough $|x|$.

\smallskip
\noindent\textbf{3)} \emph{The asymptotic bounds of $h$.} The only thing left to prove is that for all $t\in(0,T)$ we have $h(x,t)\lesssim \frac{1}{|x|^{N+2s}} $ for $|x|$ large enough.  Recall that  $\supp\{h_0\}\subset B_\eta(x_0)$ and take
\[
\tilde{h}_0=\begin{cases}
\max\{L+1, \esssup_{\R^N}\{h_0\}\}  \qquad\qquad&\textup{if}\quad   x\in B_\eta(x_0)\\
0  \qquad\qquad&\textup{if}\quad   x\in B_\eta(x_0)^c
\end{cases}
\]
with $\tilde{h}$ being the corresponding solution of \eqref{P1}--\eqref{P1-Init}. Since  $h_0\leq \tilde{h}_0$ implies $h\leq \tilde{h}$, it remains to prove the upper bound for $\tilde{h}$.

By Theorem \ref{coro:NFiniteSpeed2}, there exists an $R$ big enough such that for all $t\in(0,T)$ we have that
\[
\supp\{(\tilde{h}(\cdot,t)-L)_+\}\subset B_R(x_0).
\]
Then, for a.e. $(x,t)\in B_{2R}(x_0)^c\times(0,T)$, we have the estimate (as in Step 1)
\[
\tilde{h}(x,t)=-\int_0^t \FL \tilde{u}(x,\tau)\dd \tau,
\]
where $ \tilde{u}(x,t):=(\tilde{h}(x,t)-L)_+$. Then, since $\|\tilde{h}\|_{L^\infty}\leq \|\tilde{h}_0\|_{L^\infty}$,
\[
\begin{split}
\tilde{h}(x,t)&=-\int_0^t \int_{\R^N} \frac{\tilde{u}(x,\tau)-\tilde{u}(z,\tau)}{|x-z|^{N+2s}}\dd z\dd \tau= \int_0^t \int_{B_R(x_0)} \frac{\tilde{u}(z,\tau)}{|x-z|^{N+2s}}\dd z\dd \tau\\
&\leq t \big(\max\{L+1, \textstyle{\esssup_{\R^N}}\{h_0\}\}-L\big) \int_{B_R(x_0)} \frac{\dd z}{|x-z|^{N+2s}}\\
&\leq T \big(\max\{L+1, \textstyle{\esssup_{\R^N}}\{h_0\}\}-L\big)|B_R| (|x-x_0|-R)^{-N-2s}
\end{split}
\]
for a.e. $x\in B_{2R}(x_0)^c$ and all $t\in(0,T)$. This shows the upper bound in \eqref{eq:tailh}  for all $t\in(0,T)$ and $|x|$ large enough.
\end{proof}

\section{Conservation of positivity for the temperature}
\label{sec.conspos}

The following  result shows that the positivity set of the temperature of any  solution does not shrink in time at any place. This is an important quantitative aspect of the theory.

\begin{theorem}[Conservation of positivity for $u$] \label{thm:conspositivityu}
Let $h$ be the very weak solution of \eqref{P1}--\eqref{P1-Init} with $ h_0\in L^\infty(\R^N)$ as initial data and let $u=(h-L)_+$ be the temperature. If $u(x,t^*)>0$ in an open set  $\Omega\subset \R^N$ for a given time $t^*\in(0,T)$, then
\[
u(x,t)>0 \qquad \textup{for all} \qquad (x,t)\in \Omega \times [t^*,T).
\]
The same result holds for $t^*=0$ if $u_0=\Phi(h_0)$ is continuous in $\Omega$.
\end{theorem}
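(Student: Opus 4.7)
The plan is in two stages: first, reduce the problem to the statement that $h\geq L$ on $\Omega \times [t^*,T)$ via a subsolution comparison; second, upgrade to strict positivity of $u$ by comparing with the Dirichlet problem for the fractional heat equation on a subdomain of $\Omega$.

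\textbf{Stage 1 (lower bound $h\geq L$).} I would reuse the device from Step~2 of the proof of Theorem~\ref{thm:infinitespeedh}. Set $\hat h(x):=\min\{h(x,t^*),L\}$, viewed as constant in time. Since $\hat h\leq L$, we have $\Phi(\hat h)\equiv 0$, so $\hat h$ is a stationary very weak solution of \eqref{P1}. As $\hat h\leq h(\cdot,t^*)$ a.e., Theorem~\ref{thm:bdddistsol}\eqref{thmfraclinf2-item2} applied to the problem restarted at time $t^*$ yields $\hat h(x)\leq h(x,t)$ a.e.\ in $\R^N\times [t^*,T)$. On $\Omega$, where $u(\cdot,t^*)>0$ means $h(\cdot,t^*)>L$, this reads $\hat h=L$, hence $h(x,t)\geq L$ on $\Omega\times[t^*,T)$. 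Consequently $u=h-L$ on this set, and the equation $\partial_t h+\FL u=0$ reduces on $\Omega\times(t^*,T)$ to the linear fractional heat equation $\partial_t u+\FL u=0$ in the sense of distributions.

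\textbf{Stage 2 (strict positivity).} Fix $x_0\in\Omega$ and choose $r>0$ with $B:=B_r(x_0)\subset\subset\Omega$. Let $v$ solve the exterior-Dirichlet problem
\[
\partial_t v+\FL v=0 \text{ in } B\times(t^*,T), \qquad v=0 \text{ in } B^c\times[t^*,T), \qquad v(\cdot,t^*)=u(\cdot,t^*)\mathbf{1}_B.
\]
Setting $w:=u-v$, one computes $\partial_t w+\FL w=0$ in $B\times(t^*,T)$ with $w(\cdot,t^*)\equiv 0$ on $B$ and $w=u\geq 0$ on $B^c$; the parabolic maximum principle for the fractional Dirichlet problem then gives $w\geq 0$, i.e.\ $u\geq v$ on $B\times[t^*,T)$. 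Continuity of $u(\cdot,t^*)$ on $\Omega$ (from Theorem~\ref{thmfrac-cont} when $t^*>0$, or by hypothesis when $t^*=0$) together with $u(\cdot,t^*)>0$ on $\Omega$ guarantees that $v(\cdot,t^*)$ is a nontrivial nonnegative continuous initial datum on $B$. Strict positivity of the Dirichlet fractional heat kernel on the bounded set $B$ then forces $v>0$ throughout $B\times(t^*,T)$, so $u\geq v>0$ on $B\times(t^*,T)$. Covering $\Omega$ by such balls yields $u>0$ on $\Omega\times(t^*,T)$; combined with the hypothesis at $t=t^*$ we obtain the conclusion.

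\textbf{Main obstacle.} The delicate point is Stage~2: the nonlocality of $\FL$ couples the values of $w$ inside $B$ to those outside, so justifying $u\geq v$ requires splitting the fractional Laplacian into its $B$- and $B^c$-contributions and exploiting the fact that the nonnegative exterior data $u|_{B^c}\geq 0$ acts like a nonnegative source preserving the sign of $w$. Invoking strict positivity of the Dirichlet fractional heat kernel is, by contrast, a classical fact.
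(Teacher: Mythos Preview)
Your argument is correct and takes a genuinely different route from the paper's proof. The paper proceeds by first reducing to radially symmetric $L^1\cap L^\infty$ data, then making a detour through the Crandall--Liggett time discretization: at each step one studies the resolvent problem $g+\veps\FL(g-L)_+=f$, shows by a pointwise contradiction argument that the water region cannot shrink, and builds an explicit iterated subsolution $\underline{h}^k=c(1+\veps\lambda_1)^{-k}\varphi+L$ from the first Dirichlet eigenfunction on a subball. Passing to the limit $\veps\to0^+$ yields the strict lower bound $u(x,t)>c\,\textup{e}^{-\lambda_1 t}\varphi(x)$.

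Your approach is more direct: Stage~1 (comparison with the stationary solution $\min\{h(\cdot,t^*),L\}$) is exactly the device used in Step~2 of the proof of Theorem~\ref{thm:infinitespeedh} and immediately linearizes the equation on $\Omega$; Stage~2 is then the parabolic comparison with the Dirichlet fractional heat flow, which the paper itself invokes (citing \cite{FeRo16}) in the proof of Theorem~\ref{thm:subsupersol}. So you are essentially running the logic of Theorem~\ref{thm:subsupersol} \emph{before} Theorem~\ref{thm:conspositivityu} rather than after, and this works because Stage~1 already gives $h\geq L$ (not $h>L$), which suffices to identify $u=h-L$ and reduce to the linear equation. Your ``main obstacle'' is slightly overstated: once $w=u-v$ satisfies $\partial_tw+\FL w=0$ in $\mathcal{D}'(B)$ with $w\geq0$ in $B^c$ and $w(\cdot,t^*)=0$ on $B$, the sign follows from the standard comparison principle for the Dirichlet problem; no ad hoc splitting of $\FL$ is needed. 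Both arguments produce the same quantitative lower bound (the Dirichlet heat semigroup dominates $c\,\textup{e}^{-\lambda_1 t}\varphi$), but yours avoids the discretization machinery entirely, at the cost of relying on black-box facts about the linear Dirichlet problem that the paper's more self-contained elliptic argument does not need.
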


\begin{proof}[Proof of Theorem \ref{thm:conspositivityu}]
We are going to prove it in several steps. We found it convenient to make a detour
via elliptic problems by a discretization in time in the style of Crandall-Liggett \cite{CrLi71}. We first derive conservation of positivity and comparison results in the elliptic setting and then pass to the limit from the discretized to the original problem.

\noindent\textbf{1) }\emph{Reduction.}  Under our assumptions, we have  from Theorem \ref{thmfrac-cont} that $\Phi(h(\cdot,t^*))$ is continuous in $\Omega$. Moreover: $h(x,t^*)>L$ (or $u(x,t^*)>0$) for all $x\in \Omega$ for some open set $\Omega\subset \R^d$ and some fixed time $t^*\in[0,T)$. Take any $x_1\in \Omega$ and let $R>0$ such that $B_R(x_1)\subset\subset \Omega$. Without loss of generality, we can assume that $x_1=0$, $t^*=0$ and $h(\cdot,0)\geq L+1$ in $B_R(0)$, $h(\cdot,0)\geq 0$ in $\R^N$. Now let
\[
\hat{h}_0(x)=\begin{cases}
L+1 \qquad\qquad&\textup{if}\qquad   x\in B_R(0)\\
0 \qquad\qquad&\textup{if}\qquad   x\in B_R(0)^c,
\end{cases}
\]
and $\hat{h}$ the corresponding distributional solution of  \eqref{P1}--\eqref{P1-Init}. Clearly $\hat{h}_0\leq h(\cdot,0)$, and thus $\hat{h}\leq h$. So if we prove preservation of positivity for $\hat{u}=(\hat{h}-L)_+$ in $B_R(0)$ we are done. Note that this puts us in a $L^1(\R^N)$ framework
where the Crandall-Liggett theorem applies, since the equations contain nonlinear $m$-accretive operators (see e.g. \cite{CrLi71,CrPi82}).

\smallskip
\noindent\textbf{2)} \emph{Conservation of positivity of the elliptic problem.} By time discretization of our original equation \eqref{P1} we are led to consider the nonlocal elliptic problem
\begin{equation}\label{eq:ellipticproblem}
g+\veps \FL (g-L)_+=f,
\end{equation}
where $\veps$ is the time step.  This is usually written in the elliptic literature as
$$
\veps \FL v+ \beta(v) =f,
$$
where the equivalence is done by putting $\Phi(g)=v$ and $\beta=\Phi^{-1}$ (so that $\beta$ is a  linear function for $v>0$ with a jump at $v=0$).
The theory for this equation is well-known for $s=1$ where the famous paper \cite{BBC75} proves that the problem posed in $\R^N$ with $f\in L^1(\R^N)$ has a unique solution $g\in L^1(\R^N)$ and the map $f\mapsto g$ is an $L^1(\R^N)$-contraction. Moreover, the maximum principle holds and the $L^p$ norms are conserved. The result has been extended in \cite{ChenVeron} to the fractional version with $0<s<1$  posed in bounded domains with zero outer conditions. From this theory it follows that when $f$ is radially symmetric and nonincreasing in $|x|$ (as is the case for $\hat{h}_0$), then $g$ is also radially symmetric and nonincreasing in $|x|$. Moreover, writing $\veps \FL v=f- \beta(v)\in L^1(\R^N)\cap L^\infty(\R^N)$
implies that $v$ is H\"older continuous in $B_R$.

\noindent {\bf Claim.} \emph{Under the above conditions, equation \eqref{eq:ellipticproblem} has the following conservation (including possible expansion) of the water region set:
\[
B_r(0)=\{y\in \R^N \, : \, (f(y)-L)_+>0\} \subset \{y\in \R^N \, : \, (g(y)-L)_+>0\}=B_{r'}(0)
\]
where $r\leq r'$.}

\noindent \emph{Proof of the claim.} The fact that the above sets are balls is ensured by the radial symmetry of $f$ and $g$. To prove the claim we argue by contradiction and assume $r>r'$. Note that for every $x_0\in B_r\setminus\overline{B_{r'}}$ equation \eqref{eq:ellipticproblem} is satisfied in a pointwise sense since $(g-L)_+=0$ in  $B_\eta(x_0)$ for some $\eta>0$. Since $x_0$ is a strict global minimum of $(g-L)_+$,  we get $\FL (g-L)_+(x_0)<0$,
and thus, by \eqref{eq:ellipticproblem},
\[
f(x_0)=g(x_0)+\veps \FL (g-L)_+(x_0)< g(x_0)
\]
which implies the contradiction $g(x_0) > f(x_0)>L$.

Let now $f$  stand for $\hat{h}^{k-1}$ in \eqref{eq:ellipticproblem},  the discretized enthalpy at time $t^{k-1}$, and then $g=\hat{h}^k$ is discretized enthalpy at time $t_k=t_{k-1}+\veps$, starting from $\hat{h}^0=\hat{h}_0$. In particular, for all $k\geq0$ we get
\[
B_R(0)=\{y\in \R^N \, : \, (\hat{h}_0(y)-L)_+>0\}\subset\{y\in \R^N \, : \, (\hat{h}^k(y)-L)_+>0\}
\]

\smallskip
\noindent\textbf{3)} \emph{Subsolutions.} We want to prove comparison with a nontrivial positive subsolution. Let us first construct a suitable iteration of subsolutions.

Take $R'<R$ and consider the first eigenfunction $\varphi>0$ and first eigenvalue $\lambda_1$ of $\FL$ posed in ${B_{R'}(0)}$ with zero Dirichlet outside conditions, i.e., $\FL \varphi =\lambda_1 \varphi$ in $B_{R'}(0)$ and $\varphi=0$ in $B_{R'}(0)^c$. For every  $\veps>0$, we have that
$$
\varphi+\veps \FL \varphi=(1+\veps \lambda_1) \varphi \quad \textup{in} \quad B_{R'}(0).
$$
Now, we consider the elliptic equation \eqref{eq:ellipticproblem} restricted to $B_{R'}(0)$. Then we can use the above to see that for any $c>0$ the function $\underline{g}:=c(1+\veps \lambda_1)^{-1} \varphi+L$  satisfies \eqref{eq:ellipticproblem} in $B_{R'}(0)$ with right-hand side $\underline{f}:=c\varphi+L$. We also know that $\underline{g}=L$ in $B_{R'}(0)^c$ by definition of $\varphi$. In complete picture: the iterated solution $\underline{h}^k$ of \eqref{eq:ellipticproblem} in $B_{R'}(0)$ with $\underline{h}^k=L$ in $B_{R'}(0)^c$ and $\underline{h}^0:=c\varphi+L$ is given by
$$
\underline{h}^k=c(1+\veps \lambda_1)^{-k}\varphi+L.
$$
Note that positivity of $\varphi$ in $B_{R'}(0)$ implies that $\underline{h}^k-L>0$ in $B_{R'}(0)$. Thus, $\underline{h}^k$ is a good candidate for a subsolution.

\smallskip
\noindent\textbf{4)} \emph{Comparison.} We now want to prove that for $\hat{h}^k$ and $\underline{h}^k$ defined as in Step 2 and Step 3 we have that $\hat{h}^k> \underline{h}^k$ in $B_R(0)$  for all $k\geq0$.

Given any $R'<R$, we take $c>0$ small enough such that, $c\varphi<1$ in $B_{R'}(0)$ , i.e.
\[
\hat{h}^0=L+1>L+ c\varphi =\underline{h}^0 \quad \textup{in} \quad B_{R'}(0).
\]
By Step 2 and Step 3 we also know that $\hat{h}^k>L= \underline{h}^k$ in $B_{R}(0)\setminus B_{R'}(0)$ for all $k\geq0$. Assume by induction that $f:=\hat{h}^k> \underline{h}^k=:\underline{f}$ in $B_{R'}(0)$ and we want to conclude $g:=\hat{h}^{k+1}> \underline{h}^{k+1}:=\underline{g}$ in $B_{R'}(0)$. Arguing by contradiction we can assume that there exists a point $x_1$ of nonpositive minimum for $g-\underline{g}$ in  $B_{R'}(0)$. Then $x_1$ is also a point of nonpositive minimum of $(g-L)_+-(\underline{g}-L)_+$ in $\R^N$ since $(g-L)_+\geq 0$ and $(\underline{g}-L)_+=0$ (i.e. $(g-L)_+-(\underline{g}-L)_+\geq0$) in $B_{R'}(0)^c$, and
\[
(g-L)_+-(\underline{g}-L)_+=g-L-(\underline{g}-L)=g-\underline{g}\quad  \textup{in} \quad B_{R'}(0)
\]
by Step 2 and Step 3. Thus, $\FL ((g-L)_+-(\underline{g}-L)_+)(x_1)<0$.
Then for $x_1\in B_{R'}(0)$ we have
\[
0\geq g-\underline{g}=(f-\underline{f})-\big(\FL(g-L)_+-\FL(\underline{g}-L)_+\big)>0
\]
which is a contradiction. Thus $g>\underline{g}$ in $B_{R'}(0)$, which concludes this step.

\smallskip
\noindent\textbf{5)} \emph{Limit.} We now use the previous constructions and pass to the limit in the Crandall-Liggett process. Let $\hat{h}^k$ and $\underline{h}^k$ be defined as in Step 2 and Step 3 (we add the subindex $\veps$ to emphasize that we pass to the limit as $\veps \to 0^+$). Let $\hat{h}_\veps: \R^N\times[0,\infty)\to \R$ and $\underline{h}_\veps: B_{R}(0)\times[0,\infty)\to \R$ be defined by
\[
\hat{h}_\veps(x,t):=\hat{h}^{k}_\veps(x) \qquad \textup{for all} \qquad (x,t)\in \R^N \times [k\veps, (k+1)\veps)
\]
and
\[
\underline{h}_{\veps}(x,t):=\underline{h}_{\veps}^k(x) \qquad \textup{for all} \qquad (x,t)\in B_{R}(0)\times [k\veps, (k+1)\veps).
\]
The sequences converge in $C([0,T]: L^1(\R^N))$ and $C([0,T]: L^1(B_{R}(0))) $ respectively, and thus pointwise almost everywhere.  In particular, we get for a.e. $x\in B_R(0)$ and $t\in(0,T)$ that
\[
(h(x,t)-L)_+ \geq (\hat{h}(x,t)-L)_+=\lim_{\veps \to 0^+} (\hat{h}_\veps(x,t)-L)_+ \geq \lim_{\veps \to 0^+} (\underline{h}_\veps(x,t)-L)_+=c\e^{-\lambda_1t}\varphi(x) >0.
\]
By continuity of $(h(x,t)-L)_+ $, the result holds for all $(x,t)\in B_R(0)\times(0,T)$ which concludes the proof.
\end{proof}

\begin{remark}
\rm
\begin{enumerate}[{\rm (a)}]
\item It is trivial to obtain that $h(x,t^*)\geq L$ implies that  $h(x,t)\geq L$ for all $t\in[t^*,T)$ by comparing with the stationary solution $\min\{h(x,t^*),L\}$. The above result is stronger since it provides strict inequalities.
\item The proof of Theorem \ref{thm:conspositivityu} additionally reveals the following estimate: Given any ball $B_R(x_0)\subset\Omega$, we have that $u(x,t)\geq c\textup{e}^{-\lambda_1 t}\varphi(x)$ for all $(x,t)\in B_R(x_0)\times[t^*,T)$ for some $c>0$. Here $(\lambda_1,\varphi)$ is such that $\FL\varphi=\lambda_1\varphi$ in $B_R(x_0)$  and $\varphi=0$ in $B_R(x_0)^c$ (the first eigen pair of $\FL$ in $B_R(x_0)$).
\end{enumerate}
\end{remark}

\section{Dependence on $L$. The extremal cases $L=0$ and $L=+\infty$}
\label{sect.limit}

In previous sections we have implicitly used solutions of two associated problems to sandwich the solution $u$ of \eqref{P1}--\eqref{P1-Init}.  Actually, we will obtain the associated problems as a limit of our problems when moving the two parameters of the equation, $L$ and $s$. We devote this section to study in detail the dependence of the problem on $L$.  For the sake of simplicity, we assume that $ h_0\in L^\infty(\R^N)$ is such that $u_0:=(h_0-L)_+\in C(\Omega)$ and $u_0>0$ in an open set $\Omega\subset\R^N$.

\begin{theorem}\label{thm:subsupersol}
Let $h\in L^\infty(Q_T)$ be the very weak solution of \eqref{P1}--\eqref{P1-Init} with $ h_0\in L^\infty(\R^N)$ as initial data, and $\Omega\subset \R^N$ be an open set such that  $u_0$ is continuous and positive  in $\Omega$. Consider the problems
\begin{equation}\label{eq:FHERN}
\begin{cases}
\dell_t\overline{u} + \FL \overline{u}=0   \qquad\qquad&\textup{in}\qquad  \mathcal{D}'( \R^N\times (0,T)),\\
\overline{u}(\cdot,0)=u_0 \qquad\qquad&\textup{in}\qquad \R^N,
\end{cases}
\end{equation}
and
\begin{equation}\label{eq:FHEdom}
\begin{cases}
\dell_t\underline{u} + \FL \underline{u}=0   \qquad\qquad&\textup{in}\qquad  \mathcal{D}'(\Omega \times (0,T)),\\
\underline{u}=0    \qquad\qquad&\textup{in}\qquad  \Omega^c \times [0,T),\\
\underline{u}(\cdot,0)= u_0 \qquad\qquad&\textup{in}\qquad  \Omega.
\end{cases}
\end{equation}
Then $\underline{u}\leq u \leq \overline{u}$ in $\R^N\times(0,T)$.
\end{theorem}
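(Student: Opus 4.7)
The strategy is to sandwich $u=(h-L)_+$ by reducing both bounds to problems for the linear fractional heat equation. For the upper bound I would use the FSP comparison principle (Theorem~\ref{thm:bdddistsol}\eqref{thmfraclinf2-item2}) applied to a cleverly lifted initial datum. For the lower bound I would combine the conservation of positivity (Theorem~\ref{thm:conspositivityu}) with the parabolic maximum principle for the linear fractional heat equation with exterior Dirichlet data.

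\textbf{Upper bound.} Set $\bar h_0:=u_0+L=\max\{h_0,L\}$ and let $\bar h\in L^\infty(Q_T)$ be the very weak solution of \eqref{P1}--\eqref{P1-Init} with initial datum $\bar h_0$. Since $\bar h_0\geq h_0$ and $\bar h_0\geq L$, and since the constant $L$ is a stationary solution of the FSP, Theorem~\ref{thm:bdddistsol}\eqref{thmfraclinf2-item2} gives $h\leq \bar h$ and $\bar h\geq L$ a.e. in $Q_T$. In particular $\Phi(\bar h)=\bar h-L$, so the FSP equation for $\bar h$ collapses to
\[
\partial_t(\bar h-L)+\FL(\bar h-L)=0\quad\text{in }\mathcal{D}'(Q_T),\qquad (\bar h-L)(\cdot,0)=u_0.
\]
By uniqueness for the linear fractional heat equation on $\R^N$, $\bar h-L=\overline u$, and monotonicity of $\Phi$ yields $u=(h-L)_+\leq(\bar h-L)_+=\overline u$.

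\textbf{Lower bound.} By hypothesis $u_0\in C(\Omega)$ with $u_0>0$ on $\Omega$, so Theorem~\ref{thm:conspositivityu} ensures $u>0$ on $\Omega\times[0,T)$. On this set $h=u+L$, so the distributional identity $\partial_t h+\FL u=0$ restricts to $\partial_t u+\FL u=0$ in $\mathcal{D}'(\Omega\times(0,T))$, simply by testing with $\psi\in C_{\textup{c}}^\infty(\Omega\times(0,T))$ and noting that the additive constant $L$ drops out of $\partial_t h$. Extend $\underline u$ by zero outside $\Omega$ and put $w:=u-\underline u$; $w$ is bounded and satisfies
\[
\partial_t w+\FL w=0\quad\text{in }\mathcal{D}'(\Omega\times(0,T)),\qquad w(\cdot,0)=0\text{ on }\Omega,\qquad w\geq 0\text{ on }\Omega^c\times[0,T).
\]
The comparison principle for the linear fractional heat equation in $\Omega$ with exterior Dirichlet data (which is standard, e.g.\ via the Dirichlet semigroup associated with the killed $2s$-stable process, cf.\ the estimates used in Lemma~\ref{lem:upperbound}) then forces $w\geq 0$ on $\Omega\times(0,T)$. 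Outside $\Omega$ the inequality $\underline u=0\leq u$ is automatic, so $\underline u\leq u$ in all of $\R^N\times(0,T)$.

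\textbf{Main obstacle.} The delicate step is the rigorous passage from the nonlinear FSP to the linear fractional heat equation on $\Omega$: one must use the strict positivity produced by Theorem~\ref{thm:conspositivityu} to eliminate the multivalued branch of $\beta=\Phi^{-1}$ on the support of the admissible test functions, and the continuity of $u$ from Theorem~\ref{thmfrac-cont} to justify localizing the distributional formulation. Once this reduction is in place, both bounds become applications of standard comparison arguments for the linear equation, and the argument on $\Omega^c$ is trivial because $\underline u$ vanishes there.
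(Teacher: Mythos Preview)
Your proof is correct and follows essentially the same route as the paper: the upper bound via the lifted datum $\overline h_0=\max\{h_0,L\}$ together with FSP comparison to reduce to the linear fractional heat equation on $\R^N$, and the lower bound via conservation of positivity (Theorem~\ref{thm:conspositivityu}) to linearize on $\Omega$ followed by the comparison principle for the Dirichlet fractional heat equation (the paper cites \cite{FeRo16} for the latter). The paper's write-up is slightly more succinct but the logical structure and the key ingredients are identical.
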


\begin{proof}
By Theorem \ref{thm:conspositivityu} we have that $u(x,t)>0$ (and thus $h(x,t)=u(x,t)+L$) for all $(x,t)\in\Omega\times[0,T)$.  Recall also that $u=(h-L)_+\geq0$ in $\R^N\times(0,T)$. Then $u$ satisfies
\[
\begin{cases}
\dell_tu + \FL u=0 \qquad\qquad&\textup{in}\qquad  \mathcal{D}'(\Omega \times (0,T)),\\
u\geq 0  \qquad\qquad&\textup{in}\qquad  \Omega^c \times (0,T).
\end{cases}
\]
Both $u$ and $\underline{u}$ solve the fractional heat equation in $\Omega \times (0,T)$.  Moreover, $u(x,0)=u_0(x)\geq u_0(x) \indik_{\Omega}(x)=\underline{u}(x,0)$ in $\R^N$ and $u\geq \underline{u}$ in $\Omega^c \times (0,T)$, then standard comparison shows that $u\geq \underline{u}$ in $\R^N\times [0,T)$ (see \cite{FeRo16}).

For the upper bound, we consider $\overline{h}_0(x):=\max\{h_0(x),L\}$ and let $\overline{h}$ be the corresponding very weak solution of  \eqref{P1}--\eqref{P1-Init}. Since $\overline{h}_0\geq h_0$ and $\overline{h}_0\geq L$, the comparison principle gives $\overline{h}\geq h$ and $\overline{h}\geq L$. Then  $\overline{u}:=\Phi(\overline{h})=(\overline{h}-L)_+=\overline{h}-L$ and $\overline{u}(\cdot,0)=u_0$. This implies that
\[
u(x,t)=(h(x,t)-L)_+\leq (\overline{h}(x,t)-L)_+=\overline{u}
\]
and that $\overline{u}$ satisfies the fractional heat equation in $\R^N\times(0,T)$.
\end{proof}

In fact $\overline{u}$ and $\underline{u}$ are obtained as the corresponding limits as $L\to 0^+$ and $L\to +\infty$ respectively when $\Omega$ is considered to be maximal open set where $u_0$ is positive.

\begin{theorem}\label{thm:subsupersollim}
Assume $0\leq u_0\in L^\infty(\R^N)$.  Let $\Omega$ be the biggest open set such that $u_0(x)>0$ for all $x\in \Omega$  and assume that $u_0\in C(\Omega)$. Define the initial data
\[
h_{0,L}=\begin{cases}
L+u_0(x) \quad&\textup{in}\quad  \Omega, \\
0  \qquad&\textup{in}\quad   \Omega^c,
\end{cases}
\]
and let $h_L\in L^\infty(\R^N)$ be the corresponding very weak solution of \eqref{P1}--\eqref{P1-Init} with $u_L:=(h_L-L)_+$. Then, with $\overline{u}$ and $\underline{u}$ as in Theorem \ref{thm:subsupersol}, we have:
\begin{enumerate}[\rm (a)]
\item\label{thm:limits-item-a} $\{u_L\}_{L>0}$ is monotone in $L$, that is, if $L_2\geq L_1$ then $u_{L_2}\leq u_{L_1}$.
\item\label{thm:limits-item-b} $u_L  \to \overline{u} $ pointwise in $Q_T$ as $L\to 0^+$.
\item\label{thm:limits-item-c} $u_L  \to \underline{u} $ pointwise in $Q_T$ as $L\to +\infty$.
\end{enumerate}
\end{theorem}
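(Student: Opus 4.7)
\emph{Part (a).} After the translation of Remark \ref{rem:translationofL}, $\bar h_L := h_L - L$ solves the Stefan problem with latent heat $0$ and initial datum $\bar h_{0,L}(x) = u_0(x)\indik_\Omega(x) - L\,\indik_{\Omega^c}(x)$, and $u_L = (\bar h_L)_+$. For $L_2 \geq L_1$, $\bar h_{0,L_2} \leq \bar h_{0,L_1}$ pointwise, so Theorem \ref{thm:bdddistsol}\eqref{thmfraclinf2-item2} applied to the translated problem gives $\bar h_{L_2} \leq \bar h_{L_1}$; taking positive parts yields $u_{L_2} \leq u_{L_1}$.

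\emph{Part (b).} By (a) and Theorem \ref{thm:subsupersol}, the monotone pointwise limit $u^* := \lim_{L\to0^+} u_L$ exists and satisfies $u^* \leq \overline u$. In the translated picture the uniform bounds $-L \leq \bar h_L \leq \|u_0\|_\infty$ --- the former by comparison with the stationary constant $-L$, the latter from Theorem \ref{thm:bdddistsol}\eqref{thmfraclinf2-item1} --- force the monotone limit $\bar h^*$ to satisfy $\bar h^* \geq 0$, whence $(\bar h^*)_+ = \bar h^* = u^*$. Since $\bar h_{0,L} \to u_0\indik_\Omega$ pointwise, dominated convergence passes to the limit in the translated very weak formulation
$$
\int_0^T\!\!\int_{\R^N}\bigl(\bar h_L\partial_t\psi - (\bar h_L)_+\FL\psi\bigr)\dd x\dd t + \int_{\R^N}\bar h_{0,L}(x)\psi(x,0)\dd x = 0,
$$
identifying $u^*$ as a very weak solution of $\partial_t u^* + \FL u^* = 0$ with initial datum $u_0$. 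Uniqueness then forces $u^* = \overline u$.

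\emph{Part (c).} By (a) and Theorem \ref{thm:subsupersol}, $u_L \downarrow u^{**} \geq \underline u$. I would obtain the reverse inequality in two steps.

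\emph{Step 1: $u^{**} \equiv 0$ on the interior of $\Omega^c$.} Fix $x_0$ with $d := \mathrm{dist}(x_0,\Omega) > 0$ and $t \in (0,T)$. Pick a hyperplane $\mathcal H$ with $\Omega \subset \mathcal H^-$ and $x_0 \in \mathcal H^+$. Then $h_{0,L}$ is pointwise dominated by the step datum of Remark \ref{rem:hyperplanes} with $(P_1,P_2) = (\|u_0\|_\infty, L)$ across $\mathcal H$, so comparison together with Corollary \ref{cor:NSS-all} gives
$$
u_L(x_0,t) \leq \tilde U_L\bigl(d\,t^{-1/(2s)}\bigr),
$$
where $\tilde U_L$ is the associated selfsimilar temperature profile. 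By Theorem \ref{thm:SS-all}\eqref{thm-SS-item3} the right-hand side vanishes as soon as $d\,t^{-1/(2s)} \geq \xi_0(s,L/\|u_0\|_\infty)$, so it suffices to prove
$$
\xi_0(s,K) \longrightarrow 0 \quad\text{as}\quad K \to +\infty.
$$
Normalizing $P_1 = 1$, $P_2 = K$, the bounds $U_K \leq 1$ and the near-interface estimate $U_K(\xi) \lesssim (\xi_0(K)-\xi)^s$ from Theorem \ref{thm:SS-all}\eqref{thm-SS-item6} combine to give
$$
\bigl|\FL U_K(\eta)\bigr| \lesssim \min\bigl\{(\eta-\xi_0(K))^{-s},\,(\eta-\xi_0(K))^{-2s}\bigr\}, \qquad \eta > \xi_0(K).
$$
Inserting this into $H_K'(\eta) = 2s\,\FL U_K(\eta)/\eta$ from Lemma \ref{lem:estimHice}\eqref{LemHice-item1} and integrating --- the first bound handling the singular part near $\xi_0(K)$ (locally integrable precisely because $s<1$) and the second the tail --- yields
$$
K = \bigl|H_K(\xi_0(K)) - H_K(+\infty)\bigr| \leq \int_{\xi_0(K)}^{+\infty}\bigl|H_K'(\eta)\bigr|\dd\eta \lesssim \xi_0(K)^{-2s},
$$
so $\xi_0(K) \lesssim K^{-1/(2s)} \to 0$. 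Hence $u_L(x_0,t) = 0$ for all $L$ sufficiently large, and $u^{**}(x_0,t) = 0$.

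\emph{Step 2: $u^{**}|_\Omega = \underline u$.} Theorem \ref{thm:conspositivityu} gives $u_L > 0$ on $\Omega\times(0,T)$, so $h_L = u_L + L$ there. For $\psi \in C_\textup{c}^\infty(\Omega\times[0,T))$ substitute $h_L = u_L + L$ into the weak formulation; the $L$-contribution telescopes (by $\int_0^T \partial_t\psi\,dt = -\psi(\cdot,0)$) and disappears, leaving
$$
\int_0^T\!\!\int_{\R^N}\bigl(u_L\partial_t\psi - u_L\FL\psi\bigr)\dd x\dd t + \int_{\R^N} u_0(x)\indik_\Omega(x)\psi(x,0)\dd x = 0.
$$
Thanks to Step 1 and monotone/dominated convergence, passing to the limit $L\to+\infty$ exhibits $u^{**}$ (extended by $0$ on $\Omega^c$) as a very weak solution of \eqref{eq:FHEdom}; uniqueness of the fractional Dirichlet heat problem yields $u^{**} = \underline u$ on $\R^N$.

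\emph{Main obstacle.} The genuine difficulty is the decay $\xi_0(s,K) \to 0$ as $K \to +\infty$. The heuristic above is clean, but a rigorous execution requires that the constant in the near-interface bound $U_K(\xi) \lesssim (\xi_0(K)-\xi)^s$ be uniform in $K$; tracing through the proof of Theorem \ref{thm:SS-all}\eqref{thm-SS-item6} (which invokes fractional heat-kernel estimates in the moving domain $(-\infty,\xi_0(K))$) to establish this uniformity is the delicate technical point of the argument.
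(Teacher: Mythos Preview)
Your arguments for parts (a) and (b) are essentially the paper's: shift by $L$, compare the ordered initial data, and pass to the limit in the very weak formulation by dominated convergence. Fine.

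Part (c) is where you diverge, and there is a genuine gap. Your Step~1 relies on separating $x_0\in\mathrm{int}(\Omega^c)$ from $\Omega$ by a hyperplane $\mathcal H$ in order to compare with the selfsimilar solution of Corollary~\ref{cor:NSS-all}. But the theorem is stated for an arbitrary open set $\Omega$, and such a hyperplane need not exist: take $\Omega=\R^N\setminus\overline{B_1(0)}$ and $x_0=0$. No half-space contains $\Omega$, so the comparison $h_{0,L}\le\tilde h_0$ fails (on $\mathcal H^+\cap\Omega$ you have $h_{0,L}=L+u_0>L>L-P_2=\tilde h_0$). This is not a technicality that multiple hyperplanes or the maximal-support bound of Theorem~\ref{coro:NFiniteSpeed2}(b) can repair, since the latter also requires $\Omega$ to sit inside a ball. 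Even when the geometry cooperates, you still need $\xi_0(s,K)\to0$ as $K\to\infty$, and you correctly flag that the uniformity of the constant in Theorem~\ref{thm:SS-all}\eqref{thm-SS-item6} is unproven. In the paper's logical order, the decay of $\xi_0$ is a \emph{consequence} of this theorem (Theorem~\ref{thm:limits-selfsimilar}), not an input to it.

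The paper's Step~1 is far simpler and sidesteps both issues. In the translated picture ($L$ shifted to $0$), one argues by contradiction: if $v:=\lim_{L\to\infty}u_L>0$ on a set $\Omega^*\subset\Omega^c$ of positive measure, then $h_L>0$ there as well (monotonicity forces $u_L>0$, hence $h_L=u_L>0$). Testing the equation against $\psi(x,t)=\varphi(x)\indik_{[0,\tau]}(t)$ with $0\le\varphi\in C_\textup{c}^\infty(\Omega^*)$ gives
\[
\int_{\Omega^*}\bigl(h_L(x,\tau)-h_{0,L}(x)\bigr)\varphi(x)\dd x=-\int_0^\tau\!\!\int_{\R^N}u_L\,\FL\varphi\,\dd x\dd t.
\]
The right-hand side is bounded by $\tau\|u_0\|_\infty\|\FL\varphi\|_{L^1}$ uniformly in $L$, while on $\Omega^*$ one has $h_{0,L}=-L$ and $h_L>0$, so the left-hand side exceeds $L\int_{\Omega^*}\varphi\to\infty$. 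This contradiction gives $v=0$ a.e.\ on $\Omega^c$ with no geometric hypothesis and no appeal to the selfsimilar profile. Your Step~2 is then identical to the paper's.
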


\begin{proof}
By Remark \ref{rem:translationofL} we can assume that the latent heat is 0 by shifting the problem, i.e., $u_L=\Phi(h_L)=(h_L)_+$, \[
h_{0,L}(x)=\begin{cases}
u_0(x) \qquad\qquad&\textup{in}\qquad  \Omega, \\
-L  \qquad\qquad&\textup{in}\qquad   \Omega^c,
\end{cases}
\]
and $h_L$ solves
\[
\partial_t h_L +\FL (h_L)_+=0 \qquad\qquad\textup{in}\qquad \mathcal{D}'(\R^N\times(0,T)).
\]

\smallskip

\noindent\eqref{thm:limits-item-a} Now,  $h_{0,L_2}\leq h_{0,L_1}$ if $L_2\geq L_1$. By comparison $h_{L_2}\leq h_{L_1}$ and thus $u_{L_2}\leq u_{L_1}$.

\smallskip
\noindent\eqref{thm:limits-item-b} By Theorem \ref{thm:subsupersol} we know that $h_L\leq (h_L)_+=u_L \leq \overline{u}$
and thus $\{h_L\}_{L>0}$ bounded from above uniformly in $L$. This fact, together with the monotonicity proved in \eqref{thm:limits-item-a} ensures the existence of the pointwise limit
\[
w(x,t):=\lim_{L\to 0^+}{ h_L(x,t)}.
\]
Note also that $u_0(x)= \lim_{L\to 0^+}{ h_{0,L}(x)}$. It is now standard now to check that for any $\psi\in C_\textup{c}^\infty(\R^N \times [0,T))$ we have that
\[
\begin{split}
0=&\int_0^T \int_{\R^N} \big(h_L \partial_t \psi - (h_L)_+\FL \psi\big)\dd x \dd t + \int_{\R^N} h_{0,L}(x) \psi(x,0)\dd x\\
&\stackrel{L\to 0^+}{\longrightarrow} \int_0^T \int_{\R^N} \big(w \partial_t \psi - w_+\FL \psi\big)\dd x \dd t + \int_{\R^N} u_0(x) \psi(x,0)\dd x.
\end{split}
\]
Thus $w$ is a very weak solution of \eqref{P1}--\eqref{P1-Init} with initial data $u_0\geq0$. By the comparison principle we get that $w\geq0$, and so $w=w_+$. Thus, $w$ solves \eqref{eq:FHERN}, which by uniqueness (cf. \cite{BoSiVa17})  implies that $w=\overline{u}$.

\smallskip
\noindent\eqref{thm:limits-item-c} Since $\{u_L\}_{L>0}$ is uniformly bounded from below by $\underline{u}$, and monotone by part \eqref{thm:limits-item-a}, we can define $v(x,t):=\lim_{L\to +\infty} u_L(x,t)$.

First, we show that $v= 0$ in $\Omega^c\times[0,T)$. On one hand, $v(x,0)=\lim_{L\to \infty}{ u_L(x,0)}=u_0(x)$ and thus $v(\cdot,0)=0$ in $\Omega^c$.  On the other hand, assume by contradiction that there exists a set $\Omega^*\subset \Omega^c$ such that $|\Omega^*|>0$ and $v>0$ in $\Omega^*$. Since $\{u_{L}\}_{L>0}$ is nonincreasing as $L\to+\infty$,  $u_{L}>0$ in $\Omega^*$, and so $h_L=u_L>0$ in $\Omega^*$. We now use the  definition of very weak solutions (see Definition \ref{def:distSolfrac} and Lemma \ref{lem:ConservationOfMass})  with the test function $\psi(x,t):=\varphi(x)\indik_{[0,\tau]}(t)$ where $\varphi\ge 0$, smooth, and supported in $\Omega^*$. We get
\[
\int_{\Omega^*} (h_{L}(x,\tau)-h_{0,L}(x)) \varphi(x) \dd x=- \int_{0}^\tau \int_{\R} u_L(x,t) \FL \varphi(x) \dd x \dd t.
\]
The right hand side is bounded by $\tau \|u_0\|_{L^\infty(\R^N)} \|\FL \varphi\|_{L^1(\R^N)}$ which is uniformly bounded in $L$, while for the right hand side we have
\[
\int_{\Omega^*} (h_{L}(x,\tau)-h_{0,L}(x)) \varphi(x) \dd x > L \int_{\Omega^*}\varphi(x)\dd x \to+\infty \qquad \textup{as} \qquad L\to +\infty.
\]
This is a contradiction which shows that $|\Omega^*|=0$. Since $\tau>0$ was arbitrary, we have shown that $v= 0$ in $\Omega^c\times[0,T)$.

Second, the strict positivity of $\underline{u}$ in $\Omega$ (cf. \cite{FeRo16}) implies  (by Theorem \ref{thm:subsupersol}) that for all $t\in(0,T)$, we have that
\[
u_L(x,t) \geq \underline{u}(x,t)>0 \qquad \textup{for all} \qquad  x\in\Omega,
\]
that is, $h_L=u_L$ in $\Omega$. Then, for any $\psi \in C_\textup{c}^\infty(\Omega\times[0,T))$ we have that,
\[
\begin{split}
0=&\int_0^T \int_{\R^N} h_L \partial_t \psi - \int_0^T \int_{\R^N} (h_L)_+\FL \psi\dd x \dd t + \int_{\R^N} h_{0,L}(x) \psi(x,0)\dd x\\
=& \int_0^T \int_{\Omega } u_L \partial_t \psi - \int_0^T \int_{\R^N} u_L \FL \psi\dd x \dd t + \int_{\Omega} u_{0}(x) \psi(x,0)\dd x\\
& \stackrel{L\to +\infty}{\longrightarrow}  \int_0^T \int_{\Omega } v \partial_t \psi - \int_0^T \int_{\R^N} v \FL \psi\dd x \dd t + \int_{\Omega} u_0(x) \psi(x,0)\dd x\\
=& \int_0^T \int_{\Omega } v \partial_t \psi - \int_0^T \int_{\Omega} v \FL \psi\dd x \dd t + \int_{\Omega} u_0(x) \psi(x,0)\dd x.
\end{split}
\]
Thus, $v$ solves \eqref{eq:FHEdom}, which by uniqueness (cf. \cite{FeRo16}) implies that $v=\underline{u}$.
\end{proof}
\subsection{Limits in the selfsimilar case}

We can formulate a version of Theorem \ref{thm:subsupersol} and Theorem \ref{thm:subsupersollim} at the level of the selfsimilar solutions presented in Section \ref{sec.sss}. Since the parameter $L$ has been shown to play no role in the behaviour of the free boundary, for simplicity, we fix $L=0$ (see Remarks \ref{rem:translationofL} and \ref{rem:nodeponL}).

\begin{theorem}\label{thm:limits-selfsimilar}
Assume $L=0$  and $P_1,P_2>0$. Consider the initial data
\begin{equation*}
h_{0,P_2}(x):=\begin{cases}
P_1  \qquad\qquad&\textup{if}\qquad   x\leq0\\
-P_2  \qquad\qquad&\textup{if}\qquad   x>0.
\end{cases}
\end{equation*}
Let $h_{P_2}\in L^\infty(Q_T)$ be the corresponding very weak solution of \eqref{P1}--\eqref{P1-Init} and let $\xi_{0,P_2}$ be given by Theorem \ref{thm:SS-all}\eqref{thm-SS-item3}. Then $\{\xi_{0,P_2}\}_{P_2>0}$ is a monotone sequence such that
\begin{equation}\label{eq:limitsFreeB}
\xi_{0,P_2} \to 0 \quad \textup{as} \quad P_2\to+\infty \qquad \textup{and} \qquad \xi_{0,P_2} \to +\infty \quad \textup{as} \quad P_2\to0^+.
\end{equation}
The corresponding profiles $\{U_{P_2}\}_{P_2>0}$ are also monotone for every $\xi\in \R$, and the limits correspond to the solutions of the respective limit problems, i.e., $\overline{U}(x):=\overline{u}(x,1)$ and $\underline{U}:=\underline{u}(x,1)$ with $\overline{u}$ and $\underline{u}$ as in Theorem \ref{thm:subsupersol}.
\end{theorem}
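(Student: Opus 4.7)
The plan is to reduce everything to Theorem \ref{thm:subsupersollim} via the translation trick of Remark \ref{rem:translationofL}, and then pin down the limit of $\xi_{0,P_2}$ as $P_2\to+\infty$ by a direct weak-formulation argument. Put $\tilde h_{0,P_2}:=h_{0,P_2}+P_2$ and $\tilde L:=P_2$: the pair $(\tilde h_{0,P_2},\tilde L)$ fits the framework of Theorem \ref{thm:subsupersollim} with initial temperature $u_0=P_1\indik_{(-\infty,0)}$ and positivity set $\Omega=(-\infty,0)$, and the resulting temperature $\tilde u_{P_2}:=(\tilde h_{P_2}-\tilde L)_+$ coincides with $u_{P_2}$. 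Since both $\Omega$ and $u_0$ are invariant under $x\mapsto ax$, uniqueness of solutions to \eqref{eq:FHERN} and \eqref{eq:FHEdom} forces $\overline u$ and $\underline u$ to be selfsimilar, so their profiles $\overline U(\xi)=\overline u(\xi,1)$ and $\underline U(\xi)=\underline u(\xi,1)$ are well defined.

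For the monotonicities: if $0<P_2\leq P_2'$, then $h_{0,P_2}\geq h_{0,P_2'}$ pointwise, and comparison (Theorem \ref{thm:bdddistsol}\eqref{thmfraclinf2-item2}) gives $h_{P_2}\geq h_{P_2'}$ a.e.\ in $Q_T$, hence $U_{P_2}\geq U_{P_2'}$ pointwise. Since $\{U_{P_2}>0\}=(-\infty,\xi_{0,P_2})$ by Lemma \ref{eq:uniqueinterphase}, this positivity set is monotone in $P_2$, and so is $\xi_{0,P_2}$ (nonincreasing as $P_2$ grows). The pointwise convergences $U_{P_2}\to\overline U$ (as $P_2\to 0^+$) and $U_{P_2}\to\underline U$ (as $P_2\to+\infty$) then follow by restricting Theorem \ref{thm:subsupersollim}\eqref{thm:limits-item-b} and \eqref{thm:limits-item-c} to $t=1$.

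The case $P_2\to 0^+$ is immediate: $\overline U>0$ on $\R$ by infinite speed of propagation for the fractional heat equation (cf.\ \cite{BoSiVa17}), so the monotone pointwise convergence $U_{P_2}(\xi)\uparrow\overline U(\xi)>0$ gives $U_{P_2}(\xi)>0$ for every fixed $\xi$ and every sufficiently small $P_2$; hence $\xi_{0,P_2}>\xi$ eventually, and letting $\xi\to+\infty$ yields $\xi_{0,P_2}\to+\infty$.

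The main obstacle is the direction $P_2\to+\infty$: because $\underline U\equiv 0$ on $[0,+\infty)$, the pointwise convergence $U_{P_2}(\xi)\downarrow 0$ on $(0,+\infty)$ does not by itself force $\xi_{0,P_2}\to 0$. I would argue by contradiction, assuming $\xi_{0,P_2}\to a>0$ and fixing $\varphi\in C_\textup{c}^\infty(\R)$ with $\varphi\ge 0$, $\varphi\not\equiv 0$, and $\supp\varphi\subset(0,a/4)$. For $P_2$ large, $\xi_{0,P_2}>a/2$, so $\tilde u_{P_2}(x,1)=U_{P_2}(x)>0$ on $\supp\varphi$, which forces $\tilde h_{P_2}(x,1)>\tilde L=P_2$ there. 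Testing the equation for $\tilde h_{P_2}$ against $\psi(x,t)=\varphi(x)\theta(t)$ with $\theta$ a smooth approximation of $\indik_{[0,1]}$ satisfying $\theta(0)=1$, and using $\tilde h_{0,P_2}\equiv 0$ on $\supp\varphi\subset(0,+\infty)$, yields
\begin{equation*}
\int_\R \tilde h_{P_2}(x,1)\varphi(x)\dd x = -\int_0^1\int_\R \tilde u_{P_2}(x,t)\FL\varphi(x)\dd x\dd t.
\end{equation*}
The right-hand side is bounded in absolute value by $P_1\|\FL\varphi\|_{L^1(\R)}$ uniformly in $P_2$ (since $\|\tilde u_{P_2}\|_{L^\infty}\le P_1$), while the left-hand side exceeds $P_2\int_\R\varphi\dd x$; taking $P_2$ large yields a contradiction. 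This argument is modeled on the proof of Theorem \ref{thm:subsupersollim}\eqref{thm:limits-item-c}, but localized at the time slice $t=1$ rather than applied to the whole limit profile.
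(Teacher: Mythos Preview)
Your approach matches the paper's: both reduce to Theorem \ref{thm:subsupersollim} via the translation trick, deduce monotonicity of $\{U_{P_2}\}$ and $\{\xi_{0,P_2}\}$ from comparison, and read off the limits $U_{P_2}\to\overline U$ and $U_{P_2}\to\underline U$ from parts \eqref{thm:limits-item-b} and \eqref{thm:limits-item-c}. The $P_2\to0^+$ direction is handled identically, via strict positivity of $\overline U$.

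Where you go further is exactly right. The paper's proof for $P_2\to+\infty$ reads: ``Since $\underline u(\cdot,1)=0$ in $[0,+\infty)$ we have that $\lim_{P_2\to+\infty}\xi_{0,P_2}=0$.'' As you observe, this step is not immediate: monotone pointwise convergence $U_{P_2}\downarrow 0$ on $(0,a)$ is perfectly compatible with $U_{P_2}>0$ there for every $P_2$, so the implication $a=0$ needs justification. Your contradiction argument---testing the shifted equation against $\varphi(x)\theta(t)$ with $\supp\varphi\subset(0,a/4)$, exploiting $\tilde h_{0,P_2}\equiv 0$ there, and comparing the unbounded left side $\ge P_2\int\varphi$ with the uniformly bounded right side $\le P_1\|\FL\varphi\|_{L^1}$---is correct and is precisely the mechanism already used inside the proof of Theorem \ref{thm:subsupersollim}\eqref{thm:limits-item-c} (the step showing $|\Omega^*|=0$). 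So you have not taken a different route; rather, you have made explicit a step the paper leaves implicit, by re-invoking the same weak-formulation device at the level of the free-boundary point. One minor remark: the passage to the time-slice identity at $t=1$ relies on $\tilde h_{P_2}\in C([0,T];L^1_{\textup{loc}})$, which holds by Theorem \ref{thm:bdddistsol}\eqref{thmfraclinf2-item5} since $\tilde h_{0,P_2}(\cdot+\xi)-\tilde h_{0,P_2}\in L^1(\R)$; the paper uses the same device in Lemma \ref{lem:ConservationOfMass}.
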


\begin{remark}
In the proof of Theorem \ref{thm:limits-selfsimilar} we will make use of the results in Theorem \ref{thm:subsupersollim}. Note that we are allowed to do this by making use of the usual trick of shifting, i.e., considering the equivalent problem with $L:=P_2$ and $\hat{h}_{0,P_2}=h_{0,P_2}+P_2$.  Note also that $u_0=P_1 \indik_{\{x\leq0\}}$ is continuous in $\Omega=(-\infty,0)$, the biggest open set of strict positivity of $u_0$.
\end{remark}

\begin{proof} [Proof of Theorem \ref{thm:limits-selfsimilar}]
By Theorem \ref{thm:subsupersollim}\eqref{thm:limits-item-a} and the fact that $U_{P_2}(\xi)=(h_{P_2}(\xi,1))_+=u_{P_2}(\xi,1)$, we know that $\{U_{P_2}\}_{P_2>0}$ is a monotone sequence of functions. Since, moreover, the profiles $U_{P_2}(\xi)$ are nonincreasing in $\xi$ for all $P_2>0$, $\{\xi_{0,P_2}\}_{P_2>0}$ is monotone.

We have to establish both limits in \eqref{eq:limitsFreeB}. Note that the biggest open set where $u_{0,P_2}:=(h_{0,P_2})_+>0$ is $\Omega=(-\infty,0)$. By Theorem \ref{thm:subsupersollim}\eqref{thm:limits-item-b}, we have that
 \[
 U_{P_2}(\xi)=u_{P_2}(\xi,1)\to \overline{u}(\xi,1) \qquad \textup{as} \qquad P_2\to0^+,
 \]
 where $\overline{u}$ is the solution of \eqref{eq:FHERN}. The strict positivity of $\overline{u}$ in $\R$ automatically implies that $\xi_{0,P_2} \to +\infty$. By Theorem \ref{thm:subsupersollim}\eqref{thm:limits-item-c}, we have that
\[
U_{P_2}(\xi)=u_{P_2}(\xi,1)\to \underline{u}(\xi,1) \qquad \textup{as} \qquad P_2\to+\infty.
\]
where $\underline{u}$  is the solution of \eqref{eq:FHEdom} with $\Omega=(-\infty,0)$. Since $\underline{u}(\cdot,1)=0$ in $[0,+\infty)$ we have that $\lim_{P_2\to+\infty} \xi_{0,P_2}=0$.
\end{proof}

In Figure \ref{fig:Movinginterface1} we have computed the value of $\xi_0$ depending on $P_2$. This is done using the numerical schemes presented in Section \ref{sec:Num}. Without loss of generality we have fixed $L=0$ and $P_1=1$. The tendencies $\xi_0\to 0$ as $P_2\to +\infty$ and $\xi_0\to +\infty$ as $P_2\to 0^+$, as well as the monotonicity in $P_2$  predicted by Theorem \ref{thm:limits-selfsimilar} are clearly observed.

\begin{figure}[h!]
\includegraphics[width=0.49\textwidth]{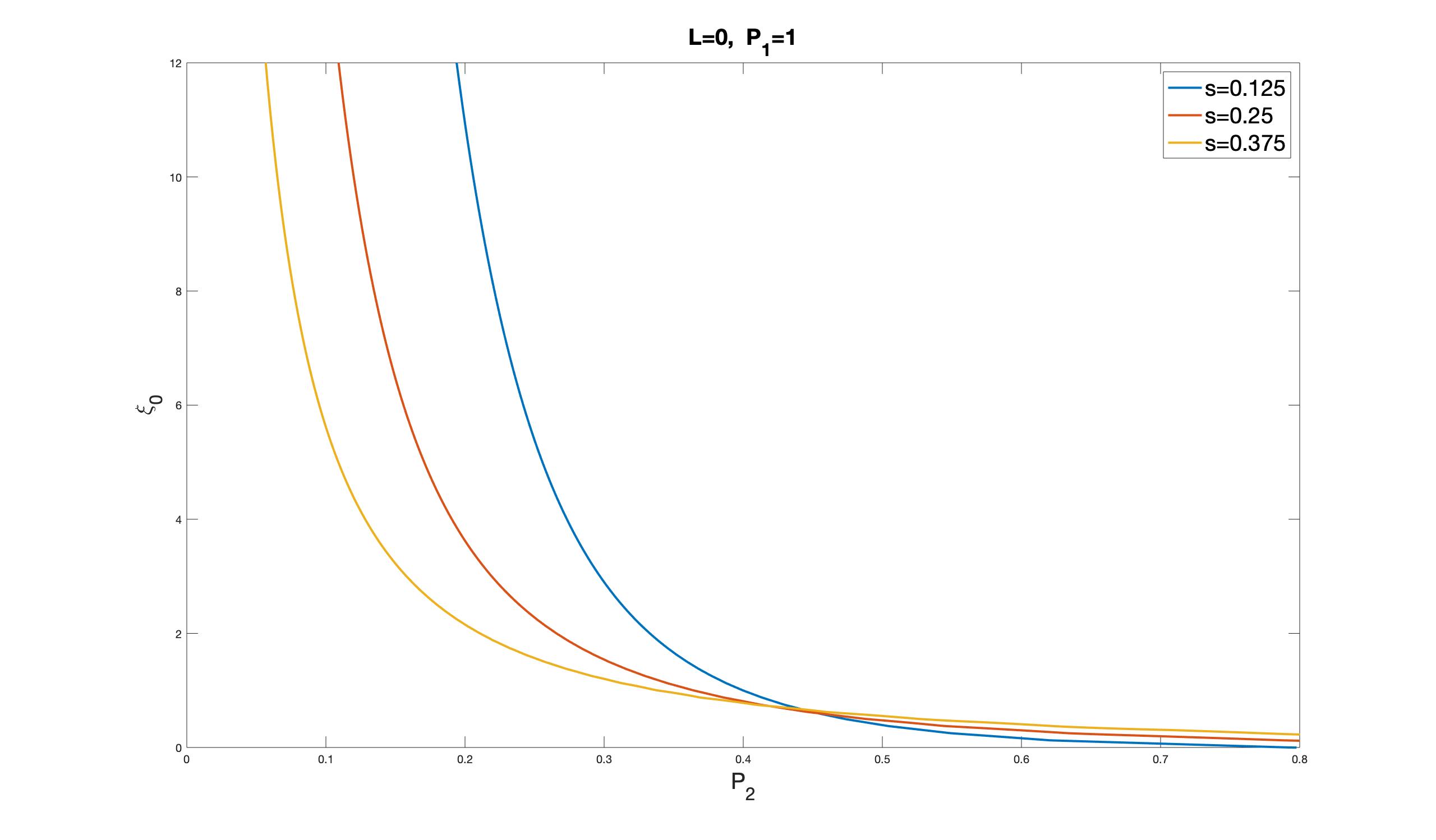}
\includegraphics[width=0.49\textwidth]{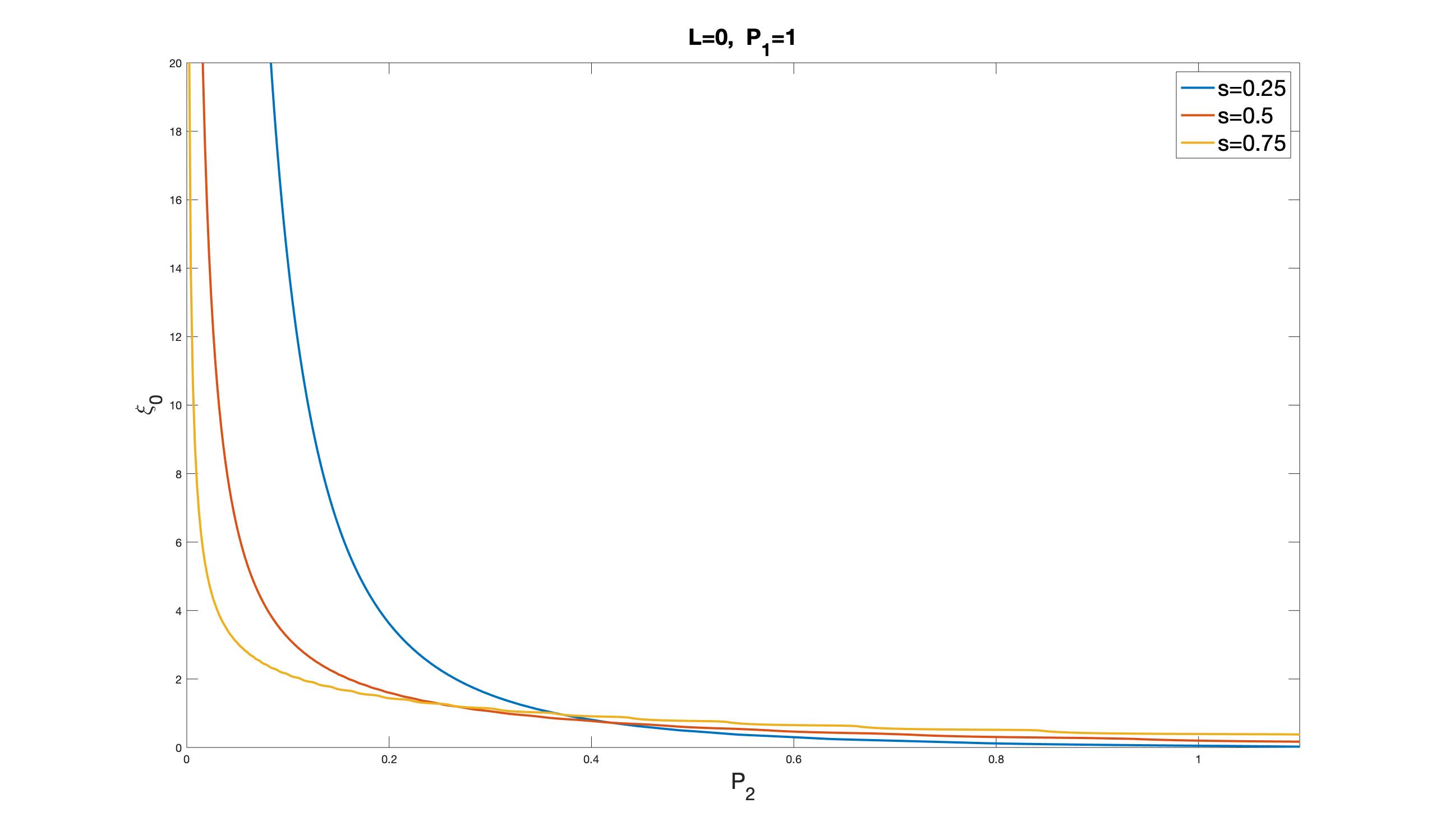}
\caption{$\xi_0$ depending on $P_2$ for a wide range of $s$.}
\label{fig:Movinginterface1}
\end{figure}

\section{Dependence on $s$. The extremal cases $s=0$ and $s=1$}
\label{sect.limit-s}
In terms of the fractional parameter $s$, there are two extremal cases. On one hand, when $s\to1^-$ we recover the classical one-phase Stefan problem
\begin{equation}\label{eq:CSP}
\dell_th-\Delta \Phi(h)=0.
\end{equation}
This problem has been extensively studied as mentioned in the introduction.
On the other hand, when $s\to0^+$, we obtain
\begin{equation}\label{eq:Limit0}
\dell_th+\Phi(h)=0.
\end{equation}
This is an ODE for every fixed $x$, whose solution is given by
\[
h(x,t)=\begin{cases}
 (h_0(x)-L)\textup{e}^{-t} +L \quad & \textup{if} \quad h_0(x)>L\\
 h_0(x)  \quad & \textup{if} \quad h_0(x)\leq L
\end{cases}
\]
and thus $u(x,t):=u_0(x)\textup{e}^{-t}$. In this extreme case, the free boundary does not move, since the positivity set of $u(\cdot,t)$ is precisely the same as the one of $u_0(x)$.
\subsection*{Convergence of solutions.}  The stability of \eqref{P1} in $s$ has been proved in \cite{ACJ2014}, where its Theorem 3 shows  that the solution of \eqref{P1} converges to the solution of \eqref{eq:CSP} as $s\to1^-$ and to the solution of \eqref{eq:Limit0} as $s\to 0^+$ as expected. The proof applies for more general equations and diffusion nonlinearities $\Phi$. See also \cite{DPQuRoVa12} and \cite{DTEnJa17a}.
\subsection*{Convergence of the selfsimilar interface.} Given $s\in(0,1)$, define $\xi_{0,s}$ to be the interface point given by Theorem \ref{thm:subsupersollim}\eqref{thm:limits-item-c}. Clearly, one expects that
\[
\xi_{0,s}\to 0 \quad \textup{as} \quad s\to0^+ \qquad \textup{and} \qquad \xi_{0,s}\to \hat{\xi}_0 \quad \textup{as} \quad s\to1^-
\]
where $\hat{\xi}_0>0$ is the interface point of the selfsimilar solutions in the classical Stefan problem.

In Figure \ref{fig:Movinginterface3} we present the above described phenomena by computing numerically (using the convergent schemes of Section \ref{sec:Num}) the value of $\xi_{0,s}$ in the range $s\in(0,1)$ for different values of $P_2$.

\begin{figure}[h!]
\includegraphics[width=\textwidth]{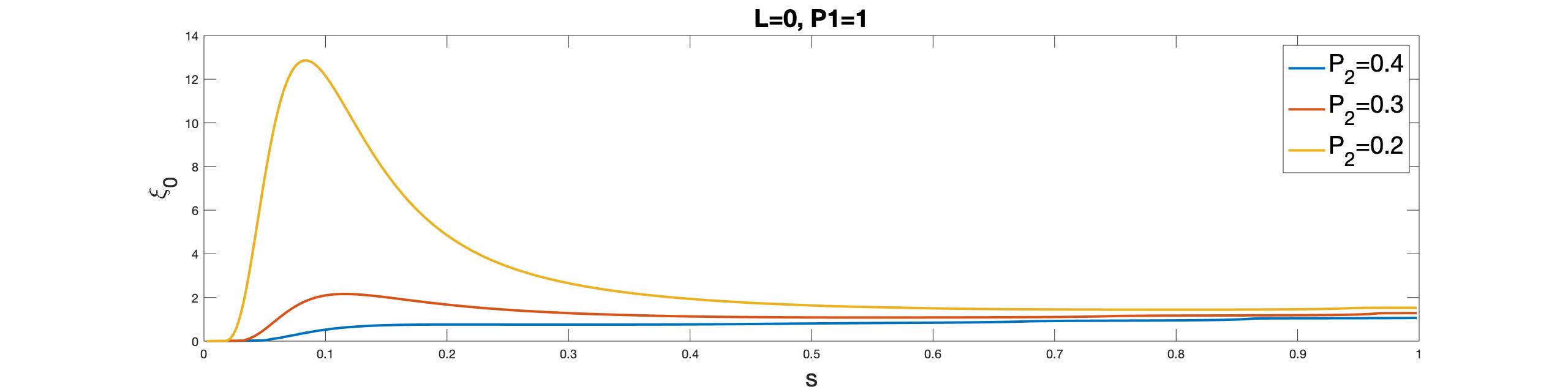}
\caption{$\xi_0$ depending on $s$ for some $P_2$.}
\label{fig:Movinginterface3}
\end{figure}

We can see several interesting phenomena in Figure \ref{fig:Movinginterface3}:

$\bullet$ The sequence $\{\xi_{0,s}\}_{s\in(0,1)}$ is not necessary monotone in $s$ and its behaviour strongly depends on $P_2$.

$\bullet$  Observe that the expected behaviour $\xi_{0,s}\to 0$ as $s\to 0^+$ is obtained independently of $P_2$.

$\bullet$ For values of $s$ close to 0, the  value of free boundary point $\xi_{0,s} $ is very sensitive to variations of $P_2$.

$\bullet$ The limit of $\xi_{0,s}$ as $s\to 1^-$ corresponds to the free boundary parameter $\hat{\xi}_0$ of the local one-phase Stefan problem. This fact can be clearly observed in Figure \ref{fig:closeto1}, where one can see that for $s=0.99$ and $s=1$ the free boundary points are essentially the same. The simulation for the local case is done using the standard finite difference method for the Laplacian.

\begin{figure}[h!]
\includegraphics[width=\textwidth]{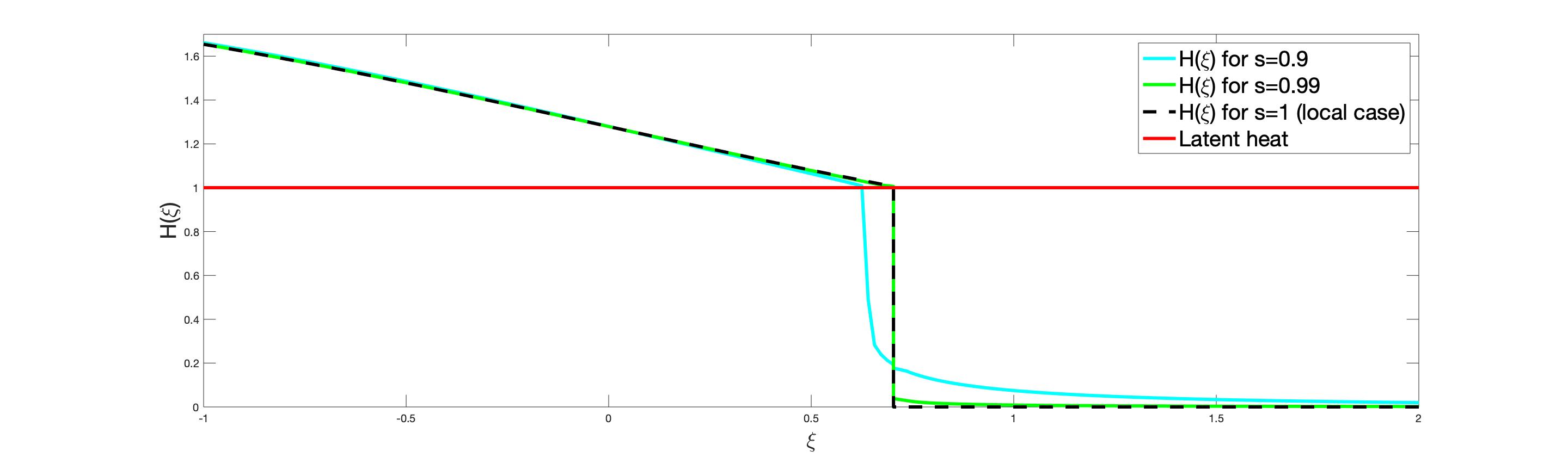}
\caption{Selfsimilar profile for values of $s$ close to 1. We fixed $L=P_1=P_2=1$.}
\label{fig:closeto1}
\end{figure}

$\bullet$ For $s$ large, the sensitivity of $\xi_0$ depending on $P_2$ is very weak.

$\bullet$ It is worth mentioning that $\xi_{0,s}$ may be larger, even much larger, or smaller than $\hat{\xi}_0$ corresponding to the classical Stefan problem, depending on the ration $P_2/P_1$. See also comment after Figure \ref{fig:SSSmovings}.

Much work is needed to understand the behaviour of the free boundaries of general solutions and their limits as $s$ tends to the limits $0^+$ and $1^-$.

\section{Asymptotic behaviour}
\label{sect.ab}

 We are also able to establish large time asymptotic behaviour for solutions with initial data with the right properties of growth/decay at $\pm \infty$. In this case, the proof is a simple consequence of the study of the selfsimilar solutions and we include it here.

\begin{theorem}[Asymptotic behaviour]\label{thm:asymBeha}
Let $\tilde{h}$ be the selfsimilar solution of \eqref{P1}--\eqref{P1-Init} for $N\ge 1$ with $\tilde{h}_0$ given by \eqref{eq:HeavySide1roratedtranslated} as initial data. Let $h_0\in L^\infty(\R^N)$ be such that $\supp\{h_0-\tilde{h}_0\}\subset \mathcal{C}_R$ where $\mathcal{C}_R$ is any cylinder with radius $ R>0$ and $\min \{\tilde{h}_0\}\leq h_0\leq \max \{\tilde{h}_0\}$. Then the corresponding very weak solution $h\in L^\infty(\R^N)$ of  \eqref{P1}--\eqref{P1-Init} satisfies
\begin{equation}\label{eq:asyminf}
\|h(\cdot,t)-\tilde{h}(\cdot,t)\|_{L^\infty(\R^N)}\leq \Lambda_\infty({t^{-\frac{1}{2s}}})
\end{equation}
for some modulus of continuity $\Lambda_\infty$. Actually, for every $p>1$, we have
\[
\|h(\cdot,t)-\tilde{h}(\cdot,t)\|_{L^p(\R^N)}\leq \Lambda_p({t^{-\frac{1}{2s}}})
\]
where $\Lambda_p(\lambda)=\Lambda_\infty(\lambda)^{\frac{p-1}{p}}  (2R(P1+P2))^{\frac{1}{p}}$.
\end{theorem}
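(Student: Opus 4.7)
The plan is to combine two ingredients: a sandwich with translated selfsimilar solutions, which gives the $L^\infty$-decay via uniform continuity of the profile $\tilde H$, and the $L^1$-contraction from Theorem \ref{thm:bdddistsol}\eqref{thmfraclinf2-item3}, which gives an $L^1$-bound on the difference. The $L^p$ estimate then follows from the log-convexity interpolation $\|f\|_{L^p}\leq \|f\|_{L^\infty}^{(p-1)/p}\|f\|_{L^1}^{1/p}$.

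\textbf{Step 1: $L^\infty$ estimate via sandwich.} By translation and rotation invariance of \eqref{P1} I may assume that $x_0=0$ and that $\tilde h_0$ is the step in the $e_1$-direction, i.e.\ $\tilde h_0(x)=L+P_1$ if $x_1\le 0$ and $\tilde h_0(x)=L-P_2$ if $x_1>0$. Assume first that $L-P_2\leq h_0\leq L+P_1$ on $B_R(0)$. The key observation is that, since $\tilde h_0$ is nonincreasing in $x_1$ and agrees with $h_0$ outside $B_R(0)$,
\[
\tilde h_0(x+Re_1)\,\leq\, h_0(x)\,\leq\, \tilde h_0(x-Re_1) \qquad\text{for all } x\in\R^N.
\]
Indeed, on $B_R(0)$ the shifted step functions attain the extreme values $L-P_2$ and $L+P_1$, respectively; outside $B_R(0)$ both inequalities reduce to $\tilde h_0(x_1\pm R)\gtrless \tilde h_0(x_1)$, which follows from the monotonicity of $\tilde h_0$. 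The comparison principle (Theorem \ref{thm:bdddistsol}\eqref{thmfraclinf2-item2}) and the translation invariance of \eqref{P1} then give
\[
\tilde h(x+Re_1,t)\,\leq\, h(x,t)\,\leq\, \tilde h(x-Re_1,t)\qquad\text{for a.e.\ } (x,t)\in Q_T.
\]

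\textbf{Step 2: Uniform continuity of $\tilde H$.} Using Corollary \ref{cor:NSS-all}, $\tilde h(x,t)=\tilde H(x_1 t^{-1/(2s)})$, so the previous sandwich yields
\[
|h(x,t)-\tilde h(x,t)|\,\leq\, \tilde H\!\big((x_1-R)t^{-1/(2s)}\big)-\tilde H\!\big((x_1+R)t^{-1/(2s)}\big).
\]
Now, $\tilde H\in C_{\textup b}(\R)$ is nonincreasing with finite limits $L+P_1$ and $L-P_2$ at $\mp\infty$ (Theorem \ref{thm:SS-all}\eqref{thm-SS-item2},\eqref{thm-SS-item5},\eqref{thm-SS-item4}), hence uniformly continuous on $\R$ with some modulus $\omega_{\tilde H}$. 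Therefore
\[
\|h(\cdot,t)-\tilde h(\cdot,t)\|_{L^\infty(\R^N)}\,\leq\, \omega_{\tilde H}(2R\,t^{-1/(2s)}),
\]
which proves \eqref{eq:asyminf} with $\Lambda_\infty(\lambda):=\omega_{\tilde H}(2R\lambda)$.

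\textbf{Step 3: $L^1$-bound and interpolation.} Applying Theorem \ref{thm:bdddistsol}\eqref{thmfraclinf2-item3} to both $(h-\tilde h)_+$ and $(\tilde h-h)_+$ gives the full $L^1$-contraction
\[
\|h(\cdot,t)-\tilde h(\cdot,t)\|_{L^1(\R^N)}\,\leq\, \|h_0-\tilde h_0\|_{L^1(\R^N)}\,\leq\, (P_1+P_2)\,|B_R|,
\]
since $h_0-\tilde h_0$ is supported in $B_R(0)$ and pointwise bounded by $P_1+P_2$. In dimension $N=1$ this is $2R(P_1+P_2)$, matching the stated constant. The interpolation inequality $\|f\|_{L^p}\leq \|f\|_{L^\infty}^{(p-1)/p}\|f\|_{L^1}^{1/p}$ applied to $f=h(\cdot,t)-\tilde h(\cdot,t)$ then yields the desired estimate in $L^p$ with the given $\Lambda_p$.

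\textbf{Main obstacle.} The essential nontrivial input is the uniform continuity of $\tilde H$ on all of $\R$, which relies on the continuity across the free boundary and the limits at $\pm\infty$ established in Theorem \ref{thm:SS-all}; everything else is soft. A minor technical point is the case when $h_0$ is not pointwise in $[L-P_2,L+P_1]$ on $B_R(0)$: one simply replaces $R$ by a slightly larger $R'$ depending on $\|h_0\|_{L^\infty(B_R)}$ so that the shifted step functions still dominate $h_0$ from both sides, which only affects the constants in $\Lambda_\infty$ and the $L^1$-bound.
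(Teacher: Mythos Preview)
Your proof is correct and follows essentially the same route as the paper: sandwich by shifted selfsimilar solutions, uniform continuity of $\tilde H$ for the $L^\infty$ bound, and interpolation with an $L^1$ bound for the $L^p$ estimate. The only cosmetic difference is that the paper obtains the $L^1$ bound by applying the sandwich $|h-\tilde h|\le \tilde h(\cdot-R,t)-\tilde h(\cdot+R,t)$ and then $L^1$-contraction between the two shifted selfsimilar solutions, whereas you apply $L^1$-contraction directly between $h$ and $\tilde h$; both yield the same constant $2R(P_1+P_2)$ in dimension one.

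One small correction to your closing remark: if $h_0$ takes values outside $[L-P_2,L+P_1]$ somewhere in $B_R$, enlarging $R$ cannot restore the sandwich $\tilde h_0(\cdot+R'e_1)\le h_0\le \tilde h_0(\cdot-R'e_1)$, since the shifted step function still only attains the values $L\pm P_1,L\mp P_2$. The paper's proof makes the same implicit assumption $L-P_2\le h_0\le L+P_1$; without it one would need to enlarge $P_1,P_2$ rather than $R$ (which changes $\tilde h$), so the statement as written really requires this pointwise bound on $h_0$.
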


\begin{remark}
The assumptions on $h_0$ imply that it lies between two translations of Heavyside type initial data. Note, moreover, that a canonical example of $\mathcal{C}_R$ is $(-R,R)\times \R^{N-1}$.
\end{remark}

\begin{proof}[Proof of Theorem \ref{thm:asymBeha}]
By rotational and translational invariance and the structure of our selfsimilar solutions, we can assume that we are in dimension $N=1$, that $\tilde{h}_0$  and $\tilde{h}$ are given by Theorem \ref{thm:SS-all} and also that $\supp\{h_0-\tilde{h}_0\}\subset (-R,R)$.

Clearly, we then have that $\tilde{h}_0(x+R) \leq h_0(x) \leq \tilde{h}_0(x-R)$ and  $\tilde{h}_0(x+R) \leq \tilde{h}_0(x) \leq \tilde{h}_0(x-R)$, which by comparison implies
\[
\tilde{h}(x+R,t) \leq h(x,t) \leq \tilde{h}(x-R,t)\quad \textup{and} \quad\tilde{h}(x+R,t) \leq \tilde{h}(x,t) \leq \tilde{h}(x-R,t).
\]
Recall also that $\tilde{h}(x,t)=\tilde{H}(x t^{-1/(2s)})$ where $\tilde{H}$ is given by Theorem  \ref{thm:SS-all}. Since $\tilde{H}\in C(\R)$,  $\lim_{\xi\to-\infty}\tilde{H}(\xi)=L+P_1$ and $\lim_{\xi\to+\infty}\tilde{H}(\xi)=L-P_2$, $\tilde{H}$ is in fact uniformly continuous in $\R$, that is, $|\tilde{H}(\xi_1)-\tilde{H}(\xi_2)|\leq \Lambda (|\xi_1-\xi_2|)$
for some modulus of continuity $\Lambda$ that does not depend on $\xi_1$ and $\xi_2$. We thus have that
\begin{equation*}
\begin{split}
|h(x,t)-\tilde{h}(x,t)|&\leq \tilde{h}(x-R,t) - \tilde{h}(x+R,t)= \tilde{H}((x-R)t^{-\frac{1}{2s}})-\tilde{H}((x+R)t^{-\frac{1}{2s}})\\
&\leq \Lambda (2Rt^{-\frac{1}{2s}})=: \Lambda_\infty(t^{-\frac{1}{2s}}).
\end{split}
\end{equation*}
Since the last estimate is independent of $x$ we conclude \eqref{eq:asyminf}.

For the general $L^p$-asymptotic behaviour, we get that
\[
\begin{split}
\|h(x,t)-\tilde{h}(x,t)\|_{L^p(\R)}
&=\left(\int_{\R} |h(x,t)-\tilde{h}(x,t)|^p\dd x\right)^{\frac{1}{p}}\\
&\leq \|h(\cdot,t)-\tilde{h}(\cdot,t)\|_{L^\infty(\R)}^{\frac{p-1}{p}} \left(\int_{\R} |h(x,t)-\tilde{h}(x,t)|\dd x\right)^{\frac{1}{p}}\\
&\leq \Lambda_\infty(t^{-\frac{1}{2s}})^{\frac{p-1}{p}}  \left(\int_{\R} |\tilde{h}(x-R,t)-\tilde{h}(x+R,t)|\dd x\right)^{\frac{1}{p}}\\
&\leq \Lambda_\infty(t^{-\frac{1}{2s}})^{\frac{p-1}{p}}  \left(\int_{\R} |\tilde{h}_0(x-R)-\tilde{h}_0(x+R)|\dd x\right)^{\frac{1}{p}}\\
&= \Lambda_\infty(t^{-\frac{1}{2s}})^{\frac{p-1}{p}}  (2R(P1+P2))^{\frac{1}{p}}
\end{split}
\]
The proof is complete.
\end{proof}

\section{Numerical schemes and experiments}\label{sec:Num}
In this section we first propose convergent numerical schemes and then present a number of interesting numerical experiments.

\subsection{Theoretical background}\label{sec:Num1}

The theory to solve \eqref{P1}--\eqref{P1-Init} (and more general problems like \eqref{eq:genproblem}) numerically via monotone finite-difference schemes is developed in \cite{DTEnJa18b, DTEnJa19} for initial data in $L^1\cap L^\infty$. Our selfsimilar solution $h$ comes from bounded initial data $h_0$ with the property $h_0-h_0(\cdot+\xi)\in L^1(\R^N)$ for all $\xi>0$. As we will show in Theorem \ref{thm:NumApproxInBounded}, this assumption is enough to make the framework of \cite{DTEnJa18b, DTEnJa19} still hold.

We discretize \eqref{P1}--\eqref{P1-Init} explicitly in space and time to obtain
\begin{equation}\label{P1:NumSch}
V_\beta^j=V_\beta^{j-1}-\varDelta t\Levy^{\varDelta x}\Phi(V_{\cdot}^{j-1})_\beta
\end{equation}
where $V$ is the numerical solution associated to the enthalpy, i.e $V_\beta^j\approx h(x_\beta,t_j)$ for all $x_\beta:=\beta\varDelta x  \in \varDelta x \Z^N$ and $t_j=j \varDelta t \in (\varDelta t\N)\cap[0,T]$, which for any $\varDelta x, \varDelta t>0$ defines a uniform grid. As initial condition, take either $V_\beta^0=\frac{1}{\varDelta x^N}\int_{x_\beta + \varDelta x(-1/2,1/2]^N}h_0(x)\dd x$ or just  $V_\beta^0=h_0(x_\beta)$ if $h_0$ has pointwise values.

In \eqref{P1:NumSch}, $\mathcal{L}^{\Delta x}$ is a monotone finite-difference discretization of $\FL$ which has been extensively studied in \cite{DTEnJa18b}. It takes the form:
\begin{equation}\label{eq:NumSchOperator}
\mathcal{L}^{\varDelta x}\psi(x_\beta)=\mathcal{L}^{\varDelta x}\psi_\beta=\sum_{\gamma\neq0}\big(\psi(x_\beta)-\psi(x_\beta+z_\gamma)\big)\omega_{\gamma,\varDelta x}
\end{equation}
where $\omega_{\gamma,\Delta x}=\omega_{-\gamma,\Delta x}$ are nonnegative weights chosen such that
\begin{equation}\label{eq:NumSchConsistency}
\|\mathcal{L}^{\varDelta x}\psi-\FL\psi\|_{L^1(\R^N)}\to 0 \qquad \textup{as} \qquad \varDelta x \to 0^+,
\end{equation}
for all $\psi\in C_\textup{c}^\infty(\R^N)$. Existence, uniqueness and stability properties of \eqref{P1:NumSch} can be obtained from \cite{DTEnJa19}.

\begin{remark}
Among the many possible choices, the experiments performed in Section \ref{sec:Num2} (and elsewhere in the paper)  are done using the so-called \em powers of the discrete Laplacian, \em which are known to produce errors of order $O(\varDelta x^2)$ in \eqref{eq:NumSchConsistency} independently of $s\in(0,1)$ (see Section 4.5 in \cite{DTEnJa18b}).
\end{remark}

By \cite{DTEnJa19}, we can easily deduce the following convergence result which holds even in our general context of very weak solutions.

\begin{theorem}\label{thm:NumApproxInBounded} Let $h\in L^\infty(Q_T)$ be the very weak solution of \eqref{P1}--\eqref{P1-Init} with $ h_0\in L^\infty(\R^N)$ as initial data such that $h_0-h_0(\cdot+\xi)\in L^1(\R^N)$ for all  $\xi>0$, $\varDelta t,\varDelta x>0$ be such that $\varDelta t\lesssim  \varDelta x^{2s}$, $\mathcal{L}^{\varDelta x}$ be such that \eqref{eq:NumSchOperator} and \eqref{eq:NumSchConsistency} hold, and $V_\beta^j$ be the solution of \eqref{P1:NumSch}. Then, for all compact sets $K\subset \R^N$, we have that
$$
\max_{t_j\in(\Delta t\Z)\cap[0,T]}\bigg\{\sum_{x_\beta\in (\Delta x\Z^N)\cap K}\int_{x_\beta+\Delta x(-\frac{1}{2},\frac{1}{2}]^N}|V_\beta^j-h(x,t_j)|\dd x\bigg\}\to0\qquad\textup{as}\qquad \Delta x\to0^+.
$$

\end{theorem}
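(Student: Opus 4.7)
The plan is to reduce Theorem \ref{thm:NumApproxInBounded} to the convergence framework already developed in \cite{DTEnJa19} for data in $L^1 \cap L^\infty$, by exploiting the fact that the hypothesis $h_0 - h_0(\cdot + \xi) \in L^1(\R^N)$ for all $\xi > 0$ is exactly what is needed to apply the $L^1$-contraction machinery to differences of two discrete solutions, even though no single solution lies in $L^1$.

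First, I would invoke the results of \cite{DTEnJa18b, DTEnJa19} to record the standard properties of the scheme \eqref{P1:NumSch} under the CFL condition $\Delta t \lesssim \Delta x^{2s}$: well-posedness, the discrete maximum principle $\|V^j\|_{\ell^\infty} \leq \|h_0\|_{L^\infty(\R^N)}$, a discrete comparison principle, and most importantly a discrete $L^1$-contraction property valid for \emph{any} pair of solutions whose initial difference lies in $\ell^1(\Delta x\, \Z^N)$. The key step is the space-translation estimate: for $\gamma \in \Z^N$ and $\xi_\gamma = \gamma\, \Delta x$, the translated grid function $\tilde V_\beta^j := V_{\beta+\gamma}^j$ solves the same scheme with initial data $V_{\beta+\gamma}^0$, so the $L^1$-contraction applied to $V^j - \tilde V^j$ yields
\begin{equation*}
\sum_{\beta} |V_\beta^j - V_{\beta+\gamma}^j|\, \Delta x^N \leq \sum_{\beta} |V_\beta^0 - V_{\beta+\gamma}^0|\, \Delta x^N \leq \|h_0 - h_0(\cdot + \xi_\gamma)\|_{L^1(\R^N)} + o(1),
\end{equation*}
where the $o(1)$ reflects the discretization of the initial data (cell averaging is an $L^1$-contraction, so it introduces no extra error). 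This provides a uniform spatial modulus of continuity for the piecewise constant interpolants $\overline{V}^{\Delta x}$, independent of $\Delta x$, $\Delta t$, and $j$.

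Next, I would derive a uniform time equicontinuity estimate in $L^1_\textup{loc}$: using the scheme, the bound $|\mathcal L^{\Delta x}\Phi(V^{j-1})_\beta|$ is controlled by a finite sum of translated differences of $\Phi(V^{j-1})$, and since $\Phi$ is $1$-Lipschitz, these are dominated by the spatial translation bounds of Step~1. This yields $\|V^j - V^{j-1}\|_{\ell^1(K)} \lesssim \Delta t\, \Lambda(\Delta x, K)$ with a uniformly controlled constant on compacts. Combined with the spatial estimate and the uniform $\ell^\infty$ bound, Kolmogorov--Riesz--Fréchet compactness delivers a subsequence $\overline V^{\Delta x_n} \to h_\ast$ in $L^1_{\textup{loc}}(Q_T)$ for some $h_\ast \in L^\infty(Q_T)$.

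Finally, passing to the limit: consistency \eqref{eq:NumSchConsistency} together with the Lipschitz continuity of $\Phi$ lets one take $\Delta x_n \to 0^+$ in the discrete weak formulation associated to \eqref{P1:NumSch}, showing that $h_\ast$ satisfies \eqref{defeq:distSolFL}, i.e.\ $h_\ast$ is a bounded very weak solution of \eqref{P1}--\eqref{P1-Init} with initial data $h_0$. By the uniqueness part of Theorem \ref{thm:bdddistsol}, $h_\ast = h$, and a standard subsequence-of-subsequence argument upgrades convergence to the whole family $\Delta x \to 0^+$. The main obstacle I anticipate is the rigorous verification in Step~1 that the $L^1$-contraction of \cite{DTEnJa19}, stated there for $\ell^1$ solutions, extends to differences of two $\ell^\infty$ solutions whose initial difference lies in $\ell^1$; this is a purely discrete doubling-of-variables or Crandall--Tartar-type argument, made possible by monotonicity of the scheme and the Lipschitz property of $\Phi$, but it must be carried out explicitly since our individual solutions are not themselves integrable.
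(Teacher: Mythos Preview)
Your proposal is correct and follows essentially the same route as the paper: the paper's proof is extremely terse, observing that the convergence argument in \cite{DTEnJa19} only requires the space--time equicontinuity of $V^j_\beta$, that the crucial space translation estimate $\sum_\beta |V^j_\beta - V^j_{\beta+\gamma}| \leq \sum_\beta |V^0_\beta - V^0_{\beta+\gamma}| < \infty$ is guaranteed by the hypothesis $h_0 - h_0(\cdot+\xi) \in L^1(\R^N)$, and that uniqueness (via \cite{GrMuPu19}) upgrades subsequential to full convergence. You have spelled out exactly these steps in more detail, including the anticipated obstacle about extending the discrete $L^1$-contraction to differences of $\ell^\infty$ solutions with $\ell^1$ initial difference, which the paper simply asserts without further comment.
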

The above convergence is the discrete version of convergence in $C([0,T];L_\textup{loc}^1(\R^N))$.

\begin{remark}
The condition $\varDelta t\leq C \varDelta x^{2s}$ is the so-called CFL-condition which ensures good properties of the explicit scheme, such us stability and the comparison principle. The precise constant $C$ is given in \cite{DTEnJa19} and depends on $s$, $\Phi$, and the $L^\infty$-norm of the initial data. Such a condition can be removed by considering an implicit scheme, but this is out of the scope of our paper.
\end{remark}

\begin{proof}[Comments on the proof of Theorem \ref{thm:NumApproxInBounded}]
A close inspection of the convergence proofs in \cite{DTEnJa19} reveals that $V_\beta^j$ converges in the above sense to the unique very weak solution $h$ if proper equicontinuity in both space and time of $V_\beta^j$ hold. These properties hold precisely when, in addition to $L^\infty$-properties, we require $\sum_\beta|V^j(x_\beta)-V^j(x_\beta+\xi)|\leq \sum_\beta|V^0(x_\beta)-V^0(x_\beta+\xi)|<\infty.$
This property is ensured by just assuming $h_0-h_0(\cdot+\xi)\in L^1(\R^N)$ for all  $\xi>0$ rather than the stronger assumption $h_0\in L^1(\R^N)$.  We also remark that the uniqueness result of \cite{GrMuPu19} is crucial to obtain convergence of the full sequence of numerical solutions (rather than up to a subsequence).
\end{proof}

\subsection{Numerical experiments}\label{sec:Num2}
Using the scheme introduced in Section \ref{sec:Num1}, we perform now a series of numerical experiments which illustrate some interesting phenomena of the one-phase fractional Stefan problem.

\subsubsection{Behaviour of selfsimilar solutions depending on $s$}

First we include a numerical simulation of the selfsimilar solutions of Theorem \ref{thm:SS-all}.

\begin{figure}[h!]
\includegraphics[width=\textwidth]{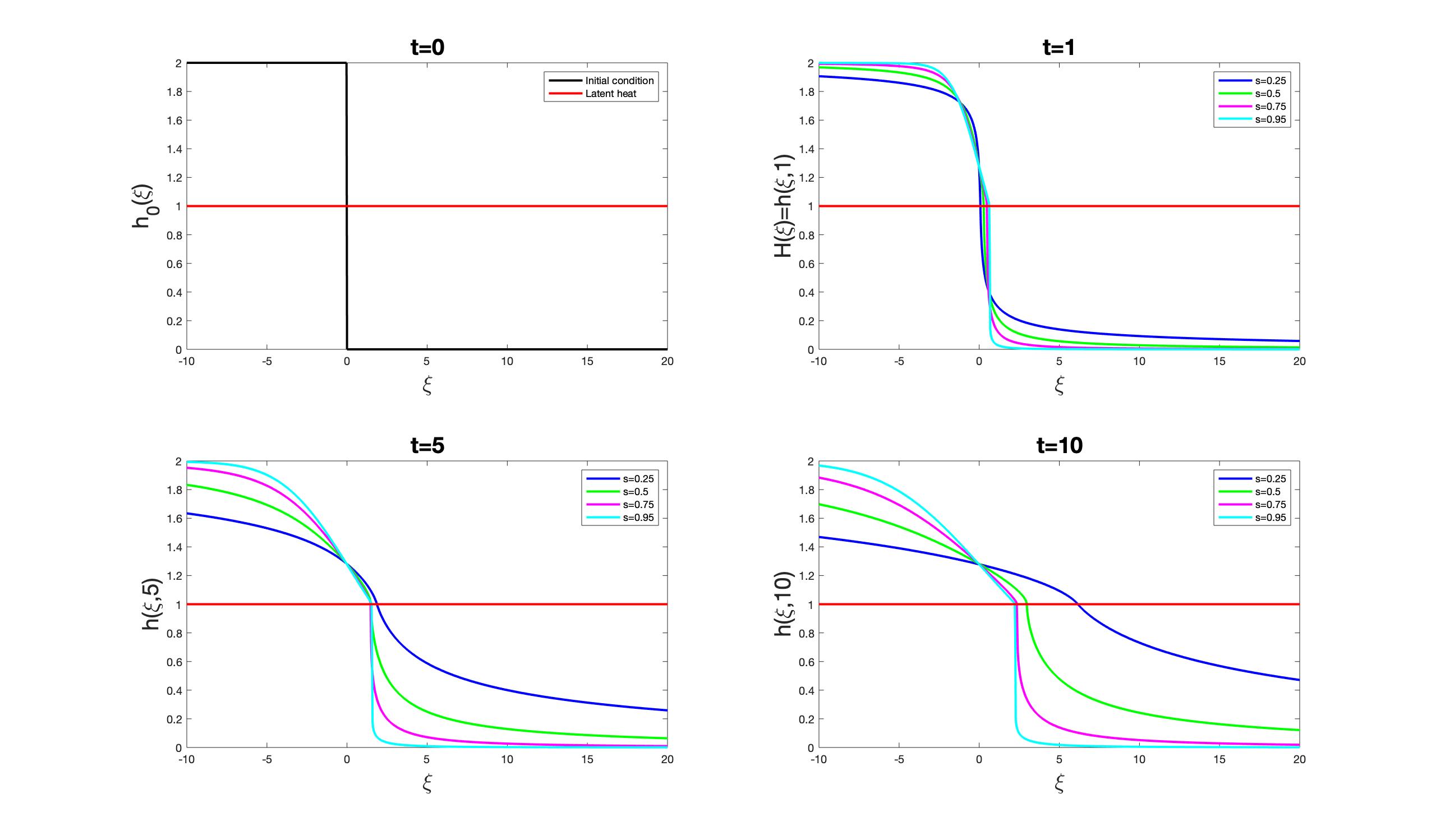}
\vspace{-1cm}
\caption{Selfsimilar solutions of Theorem \ref{thm:SS-all} for $L=P_1=P_2=1$.}
\label{fig:SSSmovings}
\end{figure}

As it can be seen in Figure \ref{fig:SSSmovings}, the behaviour of the selfsimilar solution strongly depends on $s$. Note that for short times the interface points of the four solutions are very close, this indicates that the interface parameter $\xi_0$ is very similar in these cases. However, for large times, the propagation is bigger the smaller is $s$. This is because the selfsimilar solution has the form $h(x,t)=H(x t^{-1/(2s)})$, and thus, the interface point is at $x=\xi_0 t^{1/(2s)}$.

Note also that as $s\to1^-$ the selfsimilar solution is more abrupt in the ice region close to the interface point, which is coherent with the fact that in the classical Stefan problem, the solutions are known to be discontinuous.

\subsubsection{Instant and nonconnected water emerging solutions}\label{subsubsec:instantemerg}
Here we present an example of a solution for which the water region is a connected domain at time $t=0$, but another not connected water region instantaneously emerges due to the nonlocal character of our equation. This phenomena is not present in the local Stefan problem but it has been shown for other nonlocal Stefan type problems like the one in \cite{ChS-G13}.

\begin{figure}[h!]
\includegraphics[width=1\textwidth]{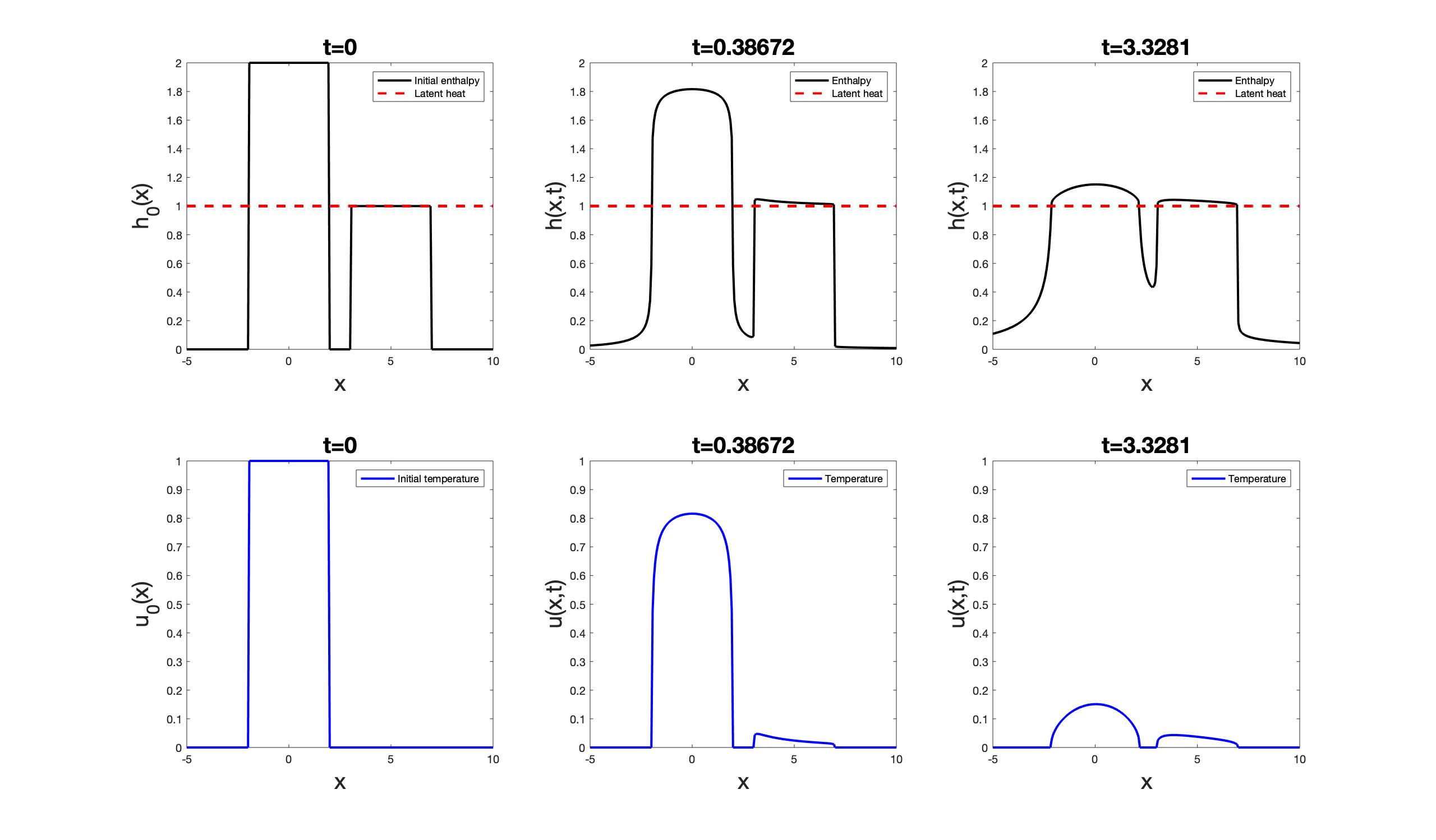}
\vspace{-1cm}
\caption{Instant water emerging solution for $s=0.25$ and $L=1$.}
\label{fig:instantemergin}
\end{figure}

Note that in Figure \ref{fig:instantemergin}, initially the water region is located in $B_2(0)$ while for any positive time another water region is also present in $B_2(5)$.

Some other interesting phenomena can also be observed in Figure \ref{fig:instantemergin} such as the infinite speed of propagation of the enthalpy predicted by Theorem \ref{thm:infinitespeedh}, the finite speed of propagation and the estimate on the maximum support of the temperature $u$ given in Theorem \ref{coro:NFiniteSpeed2} or the preservation of positivity regions of the temperature $u$ of Theorem \ref{thm:conspositivityu}.

\subsubsection{Noninstant and nonconnected water emerging regions}
The phenomenon of instant emerging regions of Section \ref{subsubsec:instantemerg} happen because the initial enthalpy was equal to the latent heat in the upcoming water emerging region. However, if we avoid such a situation, the emerging water region phenomenon cannot occur instantaneously, since we can compare with one of our selfsimilar solutions with finite speed of propagation of the temperature. Thus, the emerging region show up, but only after some time (see Figure \ref{fig:noninstantemergin}).

\begin{figure}[h!]
\includegraphics[width=\textwidth]{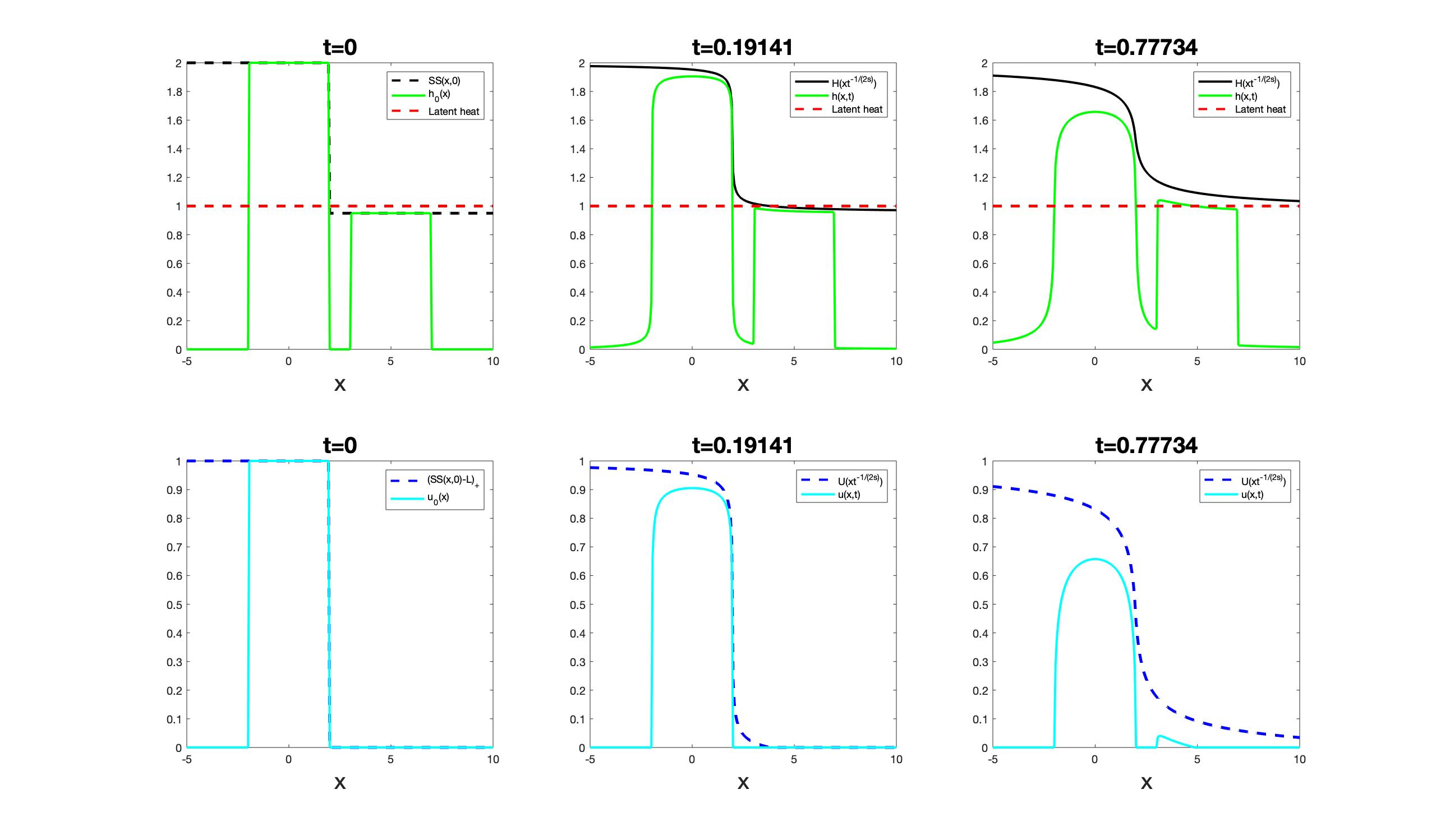}
\vspace{-1cm}
\caption{Noninstant water emerging solution for $s=0.25$ and $L=1$ compared with a selfsimilar solution of Theorem \ref{thm:SS-all}  with $P_1=1$ $P_2=0.05$.}
\label{fig:noninstantemergin}
\end{figure}


\section{Comments and open problems}\label{sec:openprob}

$\bullet$ Some of the theory developed in this work also applies to the Two-Phase Fractional Stefan Problem, namely, problem \eqref{P1} with the nonlinear relation $\Phi$ replaced by
\begin{equation*}
\Phi_2(h):= k_1 \max\{h-L,0\} + k_2 \min\{h,0\}
\end{equation*}
with $L,k_1,k_2>0$. Much can be derived from the present paper for this two-phase problem. Thus, since $\Phi_2$ is  nondecreasing and Lipschitz, a priori properties, uniqueness and existence, and continuity of $u=\Phi_2(h)$, follow directly from the results of Appendix \ref{sec:WellPosednessInLInf}, \cite{GrMuPu19} and \cite{AtCa10} respectively. The theory of numerical schemes of Section \ref{sec:Num} also applies with no changes. The two-phase equation preserves the same scaling properties, thus we have existence of nonincreasing selfsimilar solutions with a profile $H$ satisfying a profile equation analogue to \eqref{P2SS}. However, fine properties of the selfsimilar solutions such us the continuity of $H$, strict monotonicity on the interface region,  or the trajectory of the free boundary are not straightforward consequences of our theory, and depend strongly  on the parameters of $\Phi_2$. Consequently, propagation properties in the spirit of the results developed in Sections \ref{sec.fpp} and \ref{sec.conspos} do not follow trivially. These matters are the object of a further study in \cite{DTEnVa20}.

$\bullet$ Next, we make some  comments on the comparison with the mid-range interaction model
\begin{equation}\label{eq:STmidrange}
\dell_t h= J*u- u, \quad u=(h-1)_+,
\end{equation}
studied in \cite{BrChQu12}. There are a number of differences with our results for the fractional one-phase Stefan model, like:

(i)  Their temperature $u$ has a waiting time, while for model \eqref{P1} a counterexample of this property is given by the selfsimilar solutions of Theorem \ref{thm:SS-all} and Corollary \ref{cor:NSS-all}.

(ii) In \eqref{eq:STmidrange} the enthalpy $h$ has finite speed of propagation, while in \eqref{P1} it has infinite speed of propagation, as shown in Theorem \ref{thm:infinitespeedh}.

(iii) In \eqref{eq:STmidrange} there is no regularizing effect, while in \eqref{P1} the selfsimilar solution is an example of a continuous enthalpy solution for all positive times with a discontinuous initial datum.

From a more theoretical and technical point of view, there are also many differences:

(iv) The fact that $J*u- u$ is a zero-order integro-differential operator allows them to work with classical solutions, and well-posedness is more or less straightforward. For us, we need to work with a more general concept of solution, which introduces several nontrivial technicalities such as uniqueness of a good class of solutions.

(v) The absence of scaling in \eqref{eq:STmidrange} does not allow for selfsimilar solutions, which are one of the main features of our model.

Of course, there are also some similarities between \eqref{P1} and  \eqref{eq:STmidrange} such as finite speed of propagation of the temperature, or the creation of nonconnected water regions.

We devote the remainder of this section to comment on open problems which are quite relevant to our work, but do not fit in the scope of this paper.

$\bullet$ As shown in Theorem \ref{thm:SS-all} and Corollary \ref{cor:NSS-all}, discontinuous initial data can produce enthalpy solutions which are continuous for all positive times. We wonder whether this phenomenon can be generalized: Is any solution corresponding to some $L^\infty$ initial data continuous (at least close the interface region $\{h=L\}$)? This continuity property is in general not true in the classical Stefan problem, nor in the mid-range interaction Stefan problem of \cite{BrChQu12}.

$\bullet$ We present our work in the context of bounded very weak solutions. It will be interesting to explore if these solutions are in fact more regular, such as weak energy solutions.  On this subject, we refer to Corollary 1.5 in \cite{GrMuPu19}, where it is shown that very weak solutions have at least a local energy in $L^2_{\textup{loc}}([0,\infty); H^s(\R^N))$.

$\bullet$ Here we prove that the selfsimilar profile $H$ is $C^\infty$ in the ice region and $C^{1,\alpha}$ in the water region. Is the regularity in the water region optimal, or can it be improved? Similar questions can be asked for general solutions.

$\bullet$ We note that \eqref{P1} can be written in the form of a conservation law involving nonlocal gradients:
\begin{equation}\label{eq:conlaw}
\partial_th= \nabla\cdot(h {\bf v}) \quad \textup{with} \quad {\bf v}=\frac{\nabla(-\Delta)^{s-1}u}{h}.
\end{equation}
Equations with the structure of \eqref{eq:conlaw} have also been studied in the recent literature. We refer to \cite{CaVa11, BiImKa15, StTeVa16, StTeVa19} for works regarding well-posedness and properties of propagation.

$\bullet$  Can we derive from \eqref{eq:conlaw}  an equation for the free boundary $y(t)$? In the 1-D  local Stefan Problem, the free boundary equation reads
\begin{equation*}
\partial_t  y(t) =-L^{-1}\partial_x u(y(t),t),
\end{equation*}
with $y(0)=0$. The nonlocal counterpart we suggest  is given by
\[
\partial_t  y(t)=L^{-1}\partial_x(-\partial_{xx})^{s-1} u(y(t),t).
\]
In the terminology of \cite{BiImKa15}, $\nabla^{2s-1}:=\nabla (-\Delta)^{s-1}$ is called the nonlocal gradient.

$\bullet$ It will also be interesting to study the large time behaviour of general solutions with compactly supported initial data in the spirit of \cite{BrChQu12}, where they are proved to converge to a mesa type profile.

$\bullet$ Much work is needed to understand the behaviour of the free boundaries of general solutions and their limits as $s$ tends to the limits $0$ and $1$.

\appendix

\section{Very weak solutions of the generalized porous medium equation}\label{sec:WellPosednessInLInf}
\label{sec.app1}
The goal of this section is to obtain properties of a class of bounded very weak solutions for initial data and right-hand sides in $L^\infty$. We consider the following more general equation:
\begin{equation}\label{eq:genproblem}
\begin{cases}
\dell_tv(x,t)-\Operator[\Phi(v)](x,t)=f(x,t) \qquad\qquad&\text{in}\qquad Q_T:=\R^N\times(0,T)\\
v(\cdot,0)=v_0 \qquad\qquad&\text{on}\qquad \R^N,
\end{cases}
\end{equation}
where $v$ is the solution, $\Phi$ is a merely
continuous and nondecreasing function, $f=f(x,t)$ some right-hand side, and $T>0$. The nonpositive, symmetric operator $\Operator$ is given as  $\Operator:=L^\sigma+\Levymu$ with a local part $  L^\sigma[\psi](x):=\text{tr}\big(\sigma\sigma^TD^2\psi(x)\big)$ and a nonlocal one,
\[
\Levy ^\mu [\psi](x):=\textup{P.V.}\int_{\R^N\setminus\{0\} } \big(\psi(x+z)-\psi(x)\big) \dd\mu(z),
\]
where  $\psi \in C_\textup{c}^2(\R^N)$, $\sigma=(\sigma_1,....,\sigma_P)\in\R^{N\times P}$ for $P\in \N$
and $\sigma_i\in \R^N$, $D^2$ is the Hessian, and $\mu$ is a nonnegative
symmetric Radon measure. In fact, we assume that
\begin{align}
&\Phi:\R\to\R\text{ is nondecreasing and continuous; and}
\tag{$\textup{A}_\Phi$}&
\label{phias}\\
&\label{muas}\tag{$\textup{A}_{\mu}$} \mu \text{ is a nonnegative symmetric Radon measure on
}\R^N\setminus\{0\}
\text{ satisfying}
\nonumber\\
&\qquad\qquad \int_{|z|\leq1}|z|^2\dd \mu(z)+\int_{|z|>1}1\dd
\mu(z)<\infty\nonumber.
\end{align}

The general theory developed in \cite{DTEnJa17a, DTEnJa17b, DTEnJa18b, DTEnJa19} mainly regards $L^1\cap L^\infty$ very weak solutions of  \eqref{eq:genproblem}. However, for the purpose of this paper, we need to develop a pure $L^\infty$ theory.

Very weak solutions of \eqref{eq:genproblem}  are defined just as in Definition \ref{def:distSolfrac} replacing $(-\Delta)^s$ by $\Operator$ and letting $\Phi(v)$ be as in \eqref{phias} rather than just $(v-L)_+$. From the papers \cite{DTEnJa17a, DTEnJa17b, DTEnJa19}, we have existence, uniqueness, and properties of very weak solutions with $L^1\cap L^\infty$ data. From there, we now prove a result regarding pure $L^\infty$ solutions.

\begin{theorem}[Existence and properties]\label{thm:Linfty-dist}
Assume \eqref{phias}, \eqref{muas}, $ v_0 \in L^\infty(\R^N)$, and $ f\in L^1((0,T):  L^\infty(\R^N))$. Then
\begin{enumerate}[{\rm (i)}]
\item\label{thmlinf-itema} There exists a very weak solution $v\in  L^\infty(Q_T)$ of \eqref{eq:genproblem}.
\item\label{thmlinf-itemb} Let $v,\vv$ be two very weak solutions of \eqref{eq:genproblem} constructed in \eqref{thmlinf-itema}  with datum $(v_0,f), (\vv_0,\hat{f})$  respectively. We have that
\begin{enumerate}[{\rm (a)}]
\item\label{thmlinf-item1}  $\|v(\cdot,t)\|_{L^\infty(\R^N)}\leq \|v_0\|_{L^\infty(\R^N)}+ \|f\|_{L^1((0,t): L^\infty(\R^N))}$ for a.e. $t\in(0,T)$.
\item\label{thmlinf-item2} If $v_0\leq \vv_0$ and $f\leq \hat{f}$ a.e., then $v \leq\vv$ a.e.
\item\label{thmlinf-item3}  Assume, in addition, that the data satisfies $(v_0-\vv_0)^+\in L^1(\R^N)$ and $(f-\hat{f})^+\in L^1(Q_T)$. Then, for a.e. $t\in(0,T)$
\[
\int_{\R^N}(v(x,t)-\vv(x,t))^+\dd x\leq \int_{\R^N}(v_0(x)-\vv_0(x))^+\dd x+ \int_0^t\int_{\R^N}(f(x,s)-\hat{f}(x,s))^+\dd x\dd s.
\]
\item\label{thmlinf-item4} If $\|v_0(\cdot+\xi)-v_0\|_{L^1(\R^N)}, \|f(\cdot+\xi,\cdot)-f\|_{L^1(Q_T)}\to 0$ as $|\xi|\to 0^+$ then $v\in C([0,T];L^1_{\textup{loc}}(\R^N))$. Moreover, for every $t,s\in[0,T]$ and compact set $K\subset\R^N$,
$$
\|v(\cdot,t)-v(\cdot,s)\|_{L^1(K)}\leq \Lambda_K(|t-s|) +|K| \int_s^t \|f(\cdot,t)\|_{L^\infty(\R^N)}.
$$
where $\Lambda_K$ is a modulus of continuity depending on $K$, $\|v_0(\cdot+\xi)-v_0\|_{L^1(\R^N)}$ and $\|f(\cdot+\xi,\cdot)-f\|_{L^1(Q_T)}$.
\end{enumerate}
\end{enumerate}
\end{theorem}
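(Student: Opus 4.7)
The idea is to extend the theory of bounded $L^1(\R^N)\cap L^\infty(\R^N)$ very weak solutions developed in \cite{DTEnJa17a, DTEnJa17b, DTEnJa19} to pure $L^\infty(\R^N)$ data via approximation and compactness, and then transfer each of the properties (a)-(d) from the approximating solutions to the limit. Fix $\chi\in C_\textup{c}^\infty(\R^N)$ with $0\leq \chi\leq 1$ and $\chi\equiv 1$ on $B_1(0)$, set $\chi_n(x):=\chi(x/n)$, and define $v_0^n:=v_0\chi_n$, $f^n:=f\chi_n$. After the harmless reduction $\Phi\mapsto \Phi-\Phi(0)$ (which preserves \eqref{eq:genproblem} since $\Operator$ annihilates constants by \eqref{muas}), both $v_0^n$ and $f^n$ lie in the class to which the existing theory applies, yielding very weak solutions $v^n$ that satisfy analogues of (a)-(d). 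In particular, property (a) at the approximate level gives the crucial uniform bound
\begin{equation*}
\|v^n(\cdot,t)\|_{L^\infty(\R^N)}\leq \|v_0\|_{L^\infty(\R^N)}+\|f\|_{L^1((0,T);L^\infty(\R^N))}=:M
\end{equation*}
independent of $n$, which is the cornerstone of everything that follows.

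The main obstacle is obtaining compactness of $\{v^n\}$ strong enough to pass to the limit in the nonlinearity $\Phi(v^n)$: weak-$*$ convergence in $L^\infty(Q_T)$ is immediate, but the nonlinearity requires a.e.\ or $L^1_{\textup{loc}}$ convergence. I would resolve this by an Aubin--Lions-type argument. Time equicontinuity of $v^n$ in $L^1_{\textup{loc}}$ follows from the very weak formulation itself, as testing against a compactly supported $\phi\in C_\textup{c}^\infty(\R^N)$ makes $t\mapsto \int v^n(\cdot,t)\phi\,\dd x$ Lipschitz uniformly in $n$, with constant controlled by the uniform $L^\infty$ bounds on $\Phi(v^n)$ and $f^n$. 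Spatial compactness is provided by a localized energy estimate in the spirit of \cite[Corollary 1.5]{GrMuPu19}, giving $\Phi(v^n)\in L^2_\textup{loc}((0,T);H^s_\textup{loc}(\R^N))$ uniformly in $n$. Together these yield $v^{n_k}\to v$ in $L^1_\textup{loc}(Q_T)$ and a.e.\ along a subsequence, and continuity of $\Phi$ combined with the $L^\infty$ bound then gives $\Phi(v^{n_k})\to\Phi(v)$ in $L^1_\textup{loc}$ by dominated convergence. This step is the main difficulty because the nonlocal character of $\Operator$ precludes any naive finite-speed-of-propagation reasoning and the global $L^1$ contraction from the $L^1\cap L^\infty$ theory degenerates under truncation (the differences $v_0^n-v_0^m$ need not be small in $L^1$).

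Once compactness is available, passage to the limit termwise in the weak formulation is routine and establishes (i): the nonlinear term uses the strong $L^1_\textup{loc}$ convergence of $\Phi(v^{n_k})$, while the other terms use $L^1_\textup{loc}$ convergence of $v^n$, $v_0^n\to v_0$ and $f^n\to f$. For (ii), running the construction for both $v$ and $\hat v$ with matched cutoffs $\chi_n$, the properties transfer as follows: (a) is immediate from the uniform bound; (b) uses $v_0^n\leq \hat v_0^n$ (since $\chi_n\geq 0$) and comparison at the approximate level; (c) combines the approximate $L^1$ contraction with $(v_0^n-\hat v_0^n)^+=(v_0-\hat v_0)^+\chi_n\to (v_0-\hat v_0)^+$ in $L^1(\R^N)$ (dominated convergence, using $(v_0-\hat v_0)^+\in L^1(\R^N)$) and Fatou's lemma on the left-hand side, with the analogous treatment of $(f^n-\hat f^n)^+$; finally (d) combines the time equicontinuity derived above with the spatial modulus inherited from (c) applied to the pair $(v(\cdot+\xi,\cdot),v(\cdot,\cdot))$, which solve \eqref{eq:genproblem} with data $(v_0(\cdot+\xi),f(\cdot+\xi,\cdot))$ and $(v_0,f)$ by translation invariance of the equation, so that the assumed spatial moduli of $v_0$ and $f$ convert into the required modulus of $v$.
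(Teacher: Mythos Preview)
Your overall strategy---approximate by $L^1\cap L^\infty$ data, use the existing theory, pass to the limit---matches the paper's, but the paper chooses a different approximation that makes the compactness step trivial, and your version of that step has a real gap.

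The paper (following \cite{AnBr19,AmWi03}) truncates the positive and negative parts of the data \emph{separately}, with two independent radii:
\[
v_0^{\rho,\eta}=\max\{v_0,0\}\indik_{B_\rho}+\min\{v_0,0\}\indik_{B_\eta},
\]
and likewise for $f$. This yields monotonicity in each parameter: $v^{\rho,\eta}$ is nondecreasing in $\rho$ and nonincreasing in $\eta$ by comparison. Monotone plus uniformly bounded gives \emph{pointwise a.e.}\ convergence of $v^{\rho,\eta}$ for free, first as $\rho\to\infty$ then as $\eta\to\infty$; then $\Phi(v^{\rho,\eta})\to\Phi(v)$ by continuity of $\Phi$, and dominated convergence finishes the limit in the weak formulation. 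No Aubin--Lions, no energy estimates, no subsequences. Your single cutoff $v_0^n=v_0\chi_n$ destroys this monotonicity (it shrinks both the positive and the negative parts simultaneously), which is exactly why you are forced into a delicate compactness argument.

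That argument, as written, does not close. You propose to combine time equicontinuity of $v^n$ with spatial $H^s_{\textup{loc}}$ bounds on $\Phi(v^n)$ via Aubin--Lions, but Aubin--Lions needs both pieces of regularity on the \emph{same} sequence. Since $\Phi$ is merely nondecreasing and may be constant on intervals (indeed $\Phi(h)=(h-L)_+$ is constant on $(-\infty,L]$), compactness of $\Phi(v^n)$ gives no control over $v^n$ in the degenerate region; conversely, weak time equicontinuity of $\int v^n\phi$ gives only weak-$*$ compactness in $L^\infty$, not strong $L^1_{\textup{loc}}$. One can sometimes rescue this with a Minty--Browder identification of the nonlinear limit, but that is a different argument and you do not invoke it. A secondary issue: the localized energy estimate you cite from \cite{GrMuPu19} is proved for $(-\Delta)^s$, whereas Theorem~\ref{thm:Linfty-dist} is stated for general $\Operator$ satisfying \eqref{muas}; extending the energy bound to that generality is nontrivial and not supplied. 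The two-parameter monotone approximation sidesteps all of this.
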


\begin{remark}\rm
 Uniqueness is unknown for general $L^\infty$ very weak solutions of \eqref{eq:genproblem}.  However, in \cite{DTEnJa17a, DTEnJa17b} it is proved under the extra assumption that $v-\hat{v}\in L^1(Q_T)$. This extra condition has been recently removed in \cite{GrMuPu19} when $\Phi$ is locally Lipschitz and $\Operator=(-\Delta)^s$, and this is enough for the purpose of the current paper. Unfortunately, the uniqueness technique of \cite{GrMuPu19} does not seem to be easily adaptable to the general context of  \eqref{eq:genproblem},  but this is not of concern here.
 \end{remark}

\begin{proof}[Proof of Theorem \ref{thm:Linfty-dist}]
The proof of Theorem \ref{thm:Linfty-dist} will consist in an approximation by $L^1\cap L^\infty$ very weak solutions. It is also possible to obtain uniqueness in the class of  $L^\infty$ solutions that are approximated by $L^1\cap L^\infty$  solutions. Since this result is not relevant for this paper, and will require a more delicate formulation, we omit it.

To prove existence we will approximate by $L^1\cap L^\infty$ solutions in a particular way; in fact, we follow \cite{AnBr19} (see also \cite{AmWi03}). Let $\rho,\eta>0$ and define
\[
v_0^{\rho,\eta}(x)= \max\{v_0(x),0\} \indik_{B_\rho}(x)+ \min\{v_0(x),0\} \indik_{B_\eta}(x),
\]
and
\[
f^{\rho,\eta}(x,t)= \max\{f(x,t),0\} \indik_{B_\rho}(x)+ \min\{f(x,t),0\} \indik_{B_\eta}(x).
\]
Clearly $v_0^{\rho,\eta}\in L^1(\R^N)\cap L^\infty (\R^N)$ and $f^{\rho,\eta}\in L^1((0,T):  L^1(\R^N)\cap L^\infty (\R^N))$. Then, by the results in \cite{DTEnJa17a, DTEnJa17b}, there exists a unique very weak solution $v^{\rho,\eta}\in L^1(Q_T)\cap L^\infty (Q_T)$ of \eqref{eq:genproblem} with datum $v_0^{\rho,\eta}$ and $f^{\rho,\eta}$. These solutions also satisfies properties \eqref{thmlinf-item1}--\eqref{thmlinf-item4}. In particular
\begin{equation*}
\begin{split}
\|v^{\rho,\eta}(\cdot,t)\|_{L^\infty(\R^N)}&\leq \|v_0^{\rho,\eta}\|_{L^\infty(\R^N)}+ \|f^{\rho,\eta}\|_{L^1((0,t): L^\infty(\R^N))}\\
&\leq  \|v_0\|_{L^\infty(\R^N)}+ \|f\|_{L^1((0,t): L^\infty(\R^N))}=: M_{v_0,f}<+\infty.
\end{split}
\end{equation*}
Thus, $v^{\rho,\eta}(x,t)$ is uniformly bounded (form above and below) in $\rho$ and $\eta$. It is also clear that given any $\veps>0$ we have that
\[
v_0^{\rho,\eta}\leq v_0^{\rho+\veps,\eta}, \quad v_0^{\rho,\eta+\veps}\leq v_0^{\rho,\eta}, \quad f^{\rho,\eta}\leq f^{\rho+\veps,\eta}, \quad \textup{and} \quad  f^{\rho,\eta+\veps}\leq f^{\rho,\eta},
\]
so by comparison (as stated in \eqref{thmlinf-item2}) we get that  $v^{\rho,\eta}\leq v^{\rho+\veps,\eta}$ and $v^{\rho,\eta+\veps}\leq v^{\rho,\eta}$.

We have proved that the family $\{v^{\rho,\eta}(x,t)\}_{\rho>0}$ is uniformly nondecreasing (in $\rho$) for almost every $(x,t)\in Q_T$, and is also uniformly bounded. Therefore we have that there exists a measurable function $v^\eta\in L^\infty(Q_T)$ such that
\[
v^\eta=\lim_{\rho\to+\infty} v^{\rho,\eta} \quad \textup{for a.e.} \quad (x,t)\in Q_T.
\]
Now $\{v^\eta(x,t)\}_{\eta}$ is nonincreasing and bounded, and thus, there exists a measurable function $v\in L^\infty(Q_T)$ such that $v=\lim_{\eta\to+\infty} v^{\eta}$ for a.e. $(x,t)\in Q_T$. So we have that
\[
v=\lim_{\eta\to+\infty}\lim_{\rho\to+\infty} v^{\rho,\eta} \quad \textup{for a.e.} \quad (x,t)\in Q_T
\]
and $\|v\|_{L^\infty(Q_T)} \leq M_{v_0,f}$. We will show that $v$ is a very weak solution of \eqref{eq:genproblem}. Note that, for given $\psi\in C_\textup{c}^\infty(Q_T)$, $|v^{\rho,\eta}| |\partial_t\psi|\leq M_{v_0,f}|\partial_t\psi|\in L^1(Q_T)$ and $|\Phi(v^{\rho,\eta})| |\Operator[\psi]|\leq |\Phi(M_{v_0,f})||\Operator[\psi]|\in L^1(Q_T)$. Since $v^{\rho,\eta}\to v$, and then also $\Phi(v^{\rho,\eta})\to \Phi(v)$, pointwise a.e. in $Q_T$ as $R\to+\infty$, we can use the Lebesgue dominated convergence theorem to obtain
\[
\int_0^T \int_{\R^N} v^{\rho,\eta} \partial_{t}\psi \dd x\dd t \stackrel{\rho\to+\infty}{\longrightarrow}\int_0^T \int_{\R^N} v^\eta \partial_{t}\psi\dd x\dd t\stackrel{\eta\to+\infty}{\longrightarrow}\int_0^T \int_{\R^N} v \partial_{t}\psi \dd x\dd t
\]
and
\[
\begin{split}
\int_0^T \int_{\R^N} \Phi(v^{\rho,\eta}) \Operator[\psi] \dd x\dd t &\stackrel{\rho\to+\infty}{\longrightarrow}\cdots \stackrel{\eta\to+\infty}{\longrightarrow}\int_0^T\int_{\R^N} \Phi(v) \Operator[\psi] \dd x\dd t.
\end{split}
\]
Similarly, we pass to the limit in the terms involving $v_0$ and $f$, which shows that $v$ is a very weak solution of \eqref{eq:genproblem}. This proves the existence result given in \eqref{thmlinf-itema} together with the bound in \eqref{thmlinf-item1}. Part \eqref{thmlinf-item2} follows trivially from the pointwise convergence. A standard use of the monotone convergence theorem shows that we can pass to the limit in the estimate in \eqref{thmlinf-item3}. Finally \eqref{thmlinf-item4} follows easily using the dominated convergence theorem.
\end{proof}

 We show now how the result in \cite{AtCa10} is applied to get continuity of $\Phi(h)$ in the context of very weak solutions of \eqref{P1}-\eqref{P1-Init}.

\begin{proof}[Proof of Theorem \ref{thmfrac-cont}]
The authors  of \cite{AtCa10} work with the variable $u=\Phi(h)$ and approximate problem \eqref{P1} by regularizing $\Phi$,  or more precisely $\beta=\Phi^{-1}$, by a  sequence of nondegenerate  increasing nonlinearities $\beta_\veps$.  By the results of \cite{DPQuRoVa17}, the regularized problem admits a unique weak energy solution $u_\veps$ when the data belong to $L^1(\R^N)\cap L^\infty(\R^N)$, and that solution is  uniformly bounded, and also smooth if $\beta_\veps$ is smooth. Recall that here $u_0\ge 0$ and $u_\veps\ge 0.$   Sections 2 and 3  in \cite{AtCa10} prove that these  uniformly bounded solutions $u_\veps$  are equi-continuous for $t\ge \tau>0$ with a given modulus of continuity that does not depend on $\veps$, only on $\beta$  and the norm of $h_0$ in $L^\infty$. On one hand, we can thus extract a subsequence of $\{u_\veps\}_{\veps}$ converging locally uniformly to a function $u\in L^\infty(Q_T)$  which is continuous with the same modulus for all $t\ge \tau>0$. On the other hand, the sequence $h_\veps:=\beta_{\veps}(u_\veps)=\Phi^{-1}_\veps (u_\veps)$  is uniformly bounded in $Q_T$, hence it converges after  extraction of  yet another subsequence (locally weakly in every $L^p$, $1<p<\infty$) to some $h\in L^\infty(Q_T)$. We conclude that $u=\Phi(h)$  by using standard properties of limits for monotone nonlinearities. By taking limits it is clear that the pair $(h,u)$ produces a very weak solution of \eqref{P1}--\eqref{P1-Init} in the sense of Definition \ref{def:distSolfrac}. For data in  $L^\infty(\R^N)$ but not in $L^1(\R^N)$, we  proceed by monotone approximation of the initial data as in the proof of Theorem \ref{thm:bdddistsol}.
Uniqueness of very weak solutions (Theorem \ref{thm:bdddistsol})  ensures that \cite{AtCa10}'s result applies to our class of solutions.
   Finally, the last statement is easy.
\end{proof}

\section{Auxiliary results on the fractional Laplacian}
\label{sec.aux}

Straightforward computations give

\begin{lemma}[Homogeneity]\label{lem:FLHomogeneity}
Assume $s\in(0,1)$, $a>0$, and let $\psi\in C_\textup{b}^2(\R^N)$. Then
$$
\FL[\psi(a\cdot)](x)=a^{2s}\FL \psi(ax).
$$
\end{lemma}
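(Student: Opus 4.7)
The identity is a pure change-of-variables statement, so the plan is essentially one calculation. First, I would write out the definition of $\FL$ applied to the rescaled function $\phi(x) := \psi(ax)$:
\begin{equation*}
\FL[\psi(a\cdot)](x) = c_{N,s}\,\textup{P.V.}\!\int_{\R^N\setminus\{x\}}\frac{\psi(ax)-\psi(ay)}{|x-y|^{N+2s}}\dd y.
\end{equation*}
Then I would perform the substitution $z=ay$, so that $y=z/a$ and $\dd y = a^{-N}\dd z$, while $|x-y|^{N+2s} = a^{-(N+2s)}|ax-z|^{N+2s}$. Pulling the factors $a^{-N}$ and $a^{N+2s}$ outside the integral leaves exactly $a^{2s}\FL\psi(ax)$.

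The only thing to be careful about is the interpretation of the principal value, since the singularity of the integrand at $y=x$ gets mapped to the singularity at $z=ax$ under the substitution. Because $a>0$, the symmetric ball $B_r(x)$ in $y$-coordinates corresponds exactly to the symmetric ball $B_{ar}(ax)$ in $z$-coordinates, so the P.V.\ is preserved by the change of variables (for $\psi\in C_\textup{b}^2(\R^N)$ the integrand is anyway absolutely integrable at infinity and the cancellation at the singularity is of the standard symmetric type). Thus no additional care is needed beyond noting that the principal-value domain transforms consistently.

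There is no real obstacle here; the statement is a direct consequence of the $-2s$ homogeneity of the kernel $|x-y|^{-(N+2s)}$ combined with the Jacobian $a^{-N}$ of the substitution, which together yield the scaling factor $a^{2s}$. The same scaling can alternatively be derived via the Fourier symbol $|\xi|^{2s}$ of $\FL$, but the pointwise argument above is the most direct and matches the integral definition used throughout the paper.
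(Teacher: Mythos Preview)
Your proposal is correct and matches the paper's approach: the paper simply states that ``straightforward computations give'' the lemma without writing out a proof, and the change-of-variables argument you outline is exactly that computation.
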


We need the following technical result whose proof is standard and we omit it.

\begin{lemma}\label{lem:Uice}
Assume $s\in(0,1)$, and let $\psi\in L^\infty(\R)$ such that $\psi>0$ in $(-\infty,x_0)$ and $\psi=0$ for all $x\in[x_0,+\infty)$ for some $x_0\in \R$. Then:
\begin{enumerate}[{\rm (a)}]
\item $\FL\psi(x)<0$ for all $x\in (x_0,+\infty)$.
\item $\FL \psi \in C^\infty((x_0,\infty))$.
\item $\FL \psi$ is strictly increasing for all $x\in(x_0,+\infty)$.
\item $\FL \psi (x)\asymp 1/|x-x_0|^{2s}$ for all $x\gg x_0$.
\end{enumerate}
\end{lemma}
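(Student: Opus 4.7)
The plan is to use the key simplification that for $x>x_0$ the function $\psi$ vanishes in an open neighborhood of $x$, so the principal value in $\FL\psi(x)$ drops away and everything reduces to the convergent integral
\[
\FL\psi(x) = -c_{1,s}\int_{-\infty}^{x_0}\frac{\psi(y)}{(x-y)^{1+2s}}\dd y, \qquad x>x_0.
\]
All four assertions will follow by inspecting this one formula.

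Parts (a), (b), (c) are then essentially routine computations on this representation. Since the kernel is strictly positive and $\psi>0$ on $(-\infty,x_0)$, the integral is positive and (a) follows. For (b), differentiation under the integral sign is justified on any closed subinterval $[x_0+\delta,\infty)$: the $k$-th $x$-derivative of $(x-y)^{-(1+2s)}$ is bounded in absolute value by $C_k(x_0+\delta-y)^{-(1+2s+k)}$, which, against the bounded $\psi$, is integrable in $y$ thanks to the decay $|y|^{-(1+2s+k)}$ at $-\infty$. An induction gives $\FL\psi\in C^\infty((x_0,\infty))$. For (c), the first $x$-derivative yields
\[
\frac{d}{dx}\FL\psi(x)=c_{1,s}(1+2s)\int_{-\infty}^{x_0}\frac{\psi(y)}{(x-y)^{2+2s}}\dd y>0,
\]
which is the claimed strict monotonicity.

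Part (d) is where I expect the main obstacle to lie. The upper bound is immediate from $\psi\leq\|\psi\|_{L^\infty}$:
\[
|\FL\psi(x)|\leq c_{1,s}\|\psi\|_{L^\infty}\int_{-\infty}^{x_0}\frac{\dd y}{(x-y)^{1+2s}}=\frac{c_{1,s}\|\psi\|_{L^\infty}}{2s(x-x_0)^{2s}}.
\]
The matching lower bound demands that $\psi$ carry positive mass over a region of size comparable to $x-x_0$, a condition which does not follow from the stated hypotheses in isolation (a rapidly decaying $\psi$ at $-\infty$ would only produce $|\FL\psi(x)|\sim|x-x_0|^{-(1+2s)}$ from the near-$x_0$ contribution). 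In the sole application of the lemma in this paper, with $\psi=U$ in Lemma \ref{lem:estimHice}, this obstacle is removed by $\lim_{y\to-\infty}\psi(y)=P_1>0$ established in Lemma \ref{lem:lim}. Under that additional condition, pick $c_0>0$ and $M>0$ with $\psi\geq c_0$ on $(-\infty,-M]$; then for $x\gg\max\{x_0,M\}$,
\[
|\FL\psi(x)|\geq c_{1,s}c_0\int_{-\infty}^{-M}\frac{\dd y}{(x-y)^{1+2s}}=\frac{c_{1,s}c_0}{2s(x+M)^{2s}}\gtrsim\frac{1}{(x-x_0)^{2s}},
\]
matching the upper bound and closing (d).
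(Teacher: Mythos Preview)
The paper omits the proof of this lemma entirely, writing only that ``the proof is standard and we omit it.'' So there is no paper argument to compare against; your write-up essentially \emph{is} the proof.

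Your derivation of the key representation
\[
\FL\psi(x)=-c_{1,s}\int_{-\infty}^{x_0}\frac{\psi(y)}{(x-y)^{1+2s}}\dd y,\qquad x>x_0,
\]
and the deductions of (a), (b), (c), and the upper bound in (d) are all correct and carried out cleanly.

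More notably, you have spotted a genuine gap in the \emph{statement} of the lemma: the lower bound in (d) does not follow from the hypotheses as written. Your counterexample sketch is valid---if, say, $\psi$ is supported in a bounded interval $(x_0-1,x_0)$, then for $x\gg x_0$ one obtains $|\FL\psi(x)|\asymp (x-x_0)^{-(1+2s)}$ rather than $(x-x_0)^{-2s}$. Your diagnosis that the missing ingredient is a uniform positive lower bound on $\psi$ near $-\infty$, and that this is supplied in the paper's only application (Lemma~\ref{lem:estimHice}, with $\psi=U$) by Lemma~\ref{lem:lim}, is exactly right; your repaired lower-bound argument is correct. This is a useful clarification of what the authors glossed over.
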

We also want to use some standard results on cut-off functions: Let $\eta_R(x):=\eta(x/R)$ with $R>0$ where $0\leq\eta \in C^\infty(\R^N)$ is a radially symmetric non-increasing in $|x|$ function such that $\eta(x)=1$ if $|x|\leq 1/2$ and $\eta(x)=0$ if $|x|>1$.
We immediately have that $\nabla\eta_R(x)=\frac{1}{R}\nabla\eta(\frac{x}{R})$, $\eta_R$ is compactly supported, $|\eta_R(x)|\leq 1$, and $\eta_R\nearrow 1$ as $R\to+\infty$. Moreover:
\begin{lemma}\label{lem:CutOff}
Assume $s\in(0,1)$. Then:
\begin{enumerate}[{\rm (a)}]
\item\label{item1:lem:CutOff} $|\FL\eta_R(x)|=R^{-2s}|\FL\eta(x/R)|\leq CR^{-2s}(1+|x/R|)^{-N-2s}$.
\item\label{item2:lem:CutOff} $\FL\eta_R\in L^1(\R^N)$.
\end{enumerate}
\end{lemma}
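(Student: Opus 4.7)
The scaling identity in part \eqref{item1:lem:CutOff} is immediate from the homogeneity Lemma \ref{lem:FLHomogeneity} applied with $a=1/R$: since $\eta_R(x)=\eta(x/R)$, one gets $\FL\eta_R(x) = R^{-2s}\FL\eta(x/R)$. So the only real content is the pointwise bound $|\FL\eta(y)|\lesssim (1+|y|)^{-N-2s}$ for all $y\in\R^N$, from which part \eqref{item2:lem:CutOff} will follow by a change of variables.

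For the pointwise estimate, I would split cases according to $|y|$. For $|y|\le 2$, smoothness and compact support of $\eta$ give $\FL\eta\in C_{\textup{b}}(\R^N)$ (standard: the singular part near $y$ is controlled by $\|D^2\eta\|_\infty$ while the far part is controlled by $\|\eta\|_\infty$), so $|\FL\eta(y)|\le C\lesssim (1+|y|)^{-N-2s}$ on this region. For $|y|>2$, since $\supp\eta\subset B_1$ we have $\eta(y)=0$, and therefore
\[
\FL\eta(y) \;=\; -c_{N,s}\int_{B_1}\frac{\eta(z)}{|y-z|^{N+2s}}\dd z,
\]
(no principal value is needed here because $y\notin\supp\eta$). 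For $|y|\ge 2$ and $|z|\le 1$, $|y-z|\ge |y|-1\ge |y|/2$, so
\[
|\FL\eta(y)| \;\le\; c_{N,s}\,\|\eta\|_\infty\,|B_1|\,(|y|/2)^{-N-2s} \;\lesssim\; (1+|y|)^{-N-2s}.
\]
Combining the two regions and applying the scaling identity yields \eqref{item1:lem:CutOff}.

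For part \eqref{item2:lem:CutOff}, from \eqref{item1:lem:CutOff} and the change of variables $y=x/R$,
\[
\int_{\R^N}|\FL\eta_R(x)|\dd x \;\le\; C R^{-2s}\int_{\R^N}(1+|x/R|)^{-N-2s}\dd x \;=\; C R^{N-2s}\int_{\R^N}(1+|y|)^{-N-2s}\dd y,
\]
and the last integral is finite since $N+2s>N$, so $\FL\eta_R\in L^1(\R^N)$.

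There is no genuine obstacle here; the only mild subtlety is being careful that when $y$ is outside $\supp\eta$ the principal value formula reduces to an honest integral, so that the far-field decay $|y|^{-N-2s}$ comes out cleanly from the denominator of the L\'evy kernel.
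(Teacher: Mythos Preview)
Your proof is correct and follows the same approach as the paper: scaling via Lemma~\ref{lem:FLHomogeneity} reduces everything to the pointwise bound $|\FL\eta(y)|\le C(1+|y|)^{-N-2s}$, from which both parts follow. The only difference is that the paper simply cites this bound from \cite{DPQuRoVa12}, whereas you supply the standard near/far splitting argument yourself; your version is thus more self-contained but otherwise identical in spirit.
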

\begin{proof}
It follows from the fact that $|\FL\eta(x)|\leq\frac{C}{(1+|x|)^{N+2s}}$ which can be found in e.g. \cite{DPQuRoVa12}.
\end{proof}

The following technical result allows us to inherit some of the properties of the solution in $\R$ to solutions in $\R^N$. Again, the proof is straightforward (for example using the semigroup representation of $\FL$) and we omit it.
\begin{lemma}\label{lem:secSol}
Assume $s\in(0,1)$, and let $f\in C_\textup{c}^\infty(\R,\R)$ and let $F\in C_\textup{c}^\infty(\R, \R^N)$ be defined by $F(x,x'):=f(x)$ for all $x'\in \R^{N-1}$. Then
\[
(-\Delta_N)^{s} F(x,x')=(-\Delta_1)^{s} f(x).
\]
\end{lemma}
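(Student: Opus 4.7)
The plan is to use the heat semigroup representation of the fractional Laplacian, which for any bounded smooth function $G$ reads
\[
(-\Delta_N)^s G(y) = \frac{1}{|\Gamma(-s)|} \int_0^\infty \bigl(G(y) - e^{t\Delta_N} G(y)\bigr) \frac{\dd t}{t^{1+s}}.
\]
The first step is the observation that $F(x,x')=f(x)$ is bounded (since $f$ has compact support), so all relevant integrals converge, and that the $N$-dimensional heat kernel factorises as $G_N(y,t) = G_1(y_1,t)\, G_{N-1}(y',t)$ with $\int_{\R^{N-1}} G_{N-1}(y',t)\dd y' = 1$. Convolving $F$ against $G_N(\cdot, t)$ and exploiting that $F$ does not depend on $x'$, one gets
\[
e^{t\Delta_N} F(x,x') = \int_{\R^N} G_1(y_1,t) G_{N-1}(y',t)\, f(x-y_1)\dd y_1 \dd y' = e^{t\Delta_1} f(x),
\]
which in particular is independent of $x'$.

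Substituting this identity into the semigroup formula applied to $F$ gives
\[
(-\Delta_N)^s F(x,x') = \frac{1}{|\Gamma(-s)|} \int_0^\infty \bigl(f(x) - e^{t\Delta_1} f(x)\bigr) \frac{\dd t}{t^{1+s}} = (-\Delta_1)^s f(x),
\]
which is exactly the claim.

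As an alternative (perhaps more in the spirit of the rest of the paper, which uses the singular integral definition), one could argue directly: write
\[
(-\Delta_N)^s F(x,x') = c_{N,s}\,\textup{P.V.}\!\int_\R (f(x)-f(y_1)) \int_{\R^{N-1}} \frac{\dd y'}{\bigl(|x-y_1|^2 + |x'-y'|^2\bigr)^{(N+2s)/2}} \dd y_1,
\]
then compute the inner integral by the substitution $y' = x' + |x-y_1|z$ to obtain $C_{N,s}\,|x-y_1|^{-1-2s}$ with $C_{N,s} = \int_{\R^{N-1}}(1+|z|^2)^{-(N+2s)/2}\dd z$, and verify the identity $c_{N,s}\,C_{N,s} = c_{1,s}$ on the normalising constants. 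The main (minor) obstacle for this second route is precisely that bookkeeping of constants, which is why the semigroup approach is cleaner; with the semigroup method no extra constants enter and the result drops out immediately from the factorisation of the Gaussian.
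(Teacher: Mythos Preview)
Your proof is correct and follows precisely the approach the paper suggests: the paper omits the proof but remarks that it is ``straightforward (for example using the semigroup representation of $\FL$)'', which is exactly your main argument via the factorisation of the heat kernel. Your alternative singular-integral route is also valid and equally standard.
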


\subsection*{Acknowledgments}
The research of F.~del~Teso has received funding from grants BCAM Severo Ochoa accreditation SEV-2017-0718 and PGC2018-094522-B-I00 from the MICINN of the Spanish Government; J.~Endal from the Research Council of Norway under the Toppforsk (research excellence) grant agreement no. 250070 ``Waves and Nonlinear Phenomena (WaNP)'', from the European Union’s Horizon 2020 research and innovation programme under the Marie Sk\l odowska-Curie grant agreement no. 839749 ``Novel techniques for quantitative behaviour of convection-diffusion equations (techFRONT)'', and from the Research Council of Norway under the MSCA-TOPP-UT grant agreement no. 312021; and J.~L.~V\'azquez from grant PGC2018-098440-B-I00 from the MICINN of the Spanish Government. J.~L.~V\'azquez is also an Honorary Professor at Univ. Complutense de Madrid. We are grateful to A.~Figalli, E.~R.~Jakobsen, and X.~Ros-Oton for fruitful discussions.


\lhead{\emph{References}}
\bibliographystyle{abbrv}

\end{document}